\newtheorem{theorem}{Theorem}[section]    
\newtheorem{lemma}[theorem]{Lemma}          
\newtheorem{proposition}[theorem]{Proposition}
\newtheorem{corollary}[theorem]{Corollary} 
\theoremstyle{definition}
\newtheorem{definition}[theorem]{Definition}
\newtheorem{remark}[theorem]{Remark}  
\newtheorem{observation}[theorem]{Observation}
\newtheorem{conjecture}{Conjecture} 
\newtheorem{example}[theorem]{Example}    
\newcommand{\Z}{\mathbb{Z}}
\newcommand{\Q}{\mathbb{Q}}
\newcommand{\F}{\mathcal F_{ob}}
\newcommand{\mK}{\mathcal{K}}
\newcommand{\mT}{\mathcal{T}}
\newcommand{\sgn}{{\tt sgn}}
\numberwithin{equation}{section}
\title[The defect of Bennequin-Eliashberg inequality and Bennequin surfaces]{
The defect of Bennequin-Eliashberg inequality and Bennequin surfaces}
\author{Tetsuya Ito}
\address{Department of Mathematics, Graduate School of Science, Osaka University \\ 1-1 Machikaneyama Toyonaka, Osaka 560-0043, JAPAN}
\email{tetito@math.sci.osaka-u.ac.jp}
\urladdr{http://www.math.sci.osaka-u.ac.jp/~tetito/}
\author{Keiko Kawamuro}
\address{Department of Mathematics,   
The University of Iowa, Iowa City, IA 52242, USA}
\email{keiko-kawamuro@uiowa.edu}
\date{\today} 
\subjclass[2010]{Primary~57M27, Secondary~57M25}
\keywords{Bennequin-Eliashberg inequality, Bennequin surface, strongly quasipositive braids}
\begin{document}

\begin{abstract}
For a null-homologous transverse link $\mT$ in a general contact manifold with an open book, we explore strongly quasipositive braids and Bennequin surfaces.
We define the defect $\delta(\mT)$ of the Bennequin-Eliashberg inequality. 

We study relations between $\delta(\mT)$ and minimal genus Bennequin surfaces of $\mT$.  
In particular, in the disk open book case, under some large fractional Dehn twist coefficient assumption, we show that $\delta(\mT)=N$ if and only if $\mT$ is the boundary of a Bennequin surface with exactly $N$ negatively twisted bands. That is, the Bennequin inequality is sharp if and only if it is the closure of a strongly quasipositive braid.
\end{abstract}

\maketitle

\section{Introduction}\label{sec:1}

Let $B_{n}$ be the $n$-strand braid group with the standard generators $\sigma_1,\ldots, \sigma_{n-1}$. For $1 \leq i < j \leq n$, let $\sigma_{i,j}$ be the $n$-braid given by 
\[\sigma_{i,j}= (\sigma_{j-1} \sigma_{j-2} \cdots \sigma_{i+1}) \sigma_i (\sigma_{j-1} \sigma_{j-2} \cdots \sigma_{i+1})^{-1} 
\] 
In particular, $\sigma_{i, i+1}=\sigma_i$. 
The braid $\sigma_{i,j}$ (resp. $\sigma_{i,j}^{-1}$) can be understood as the boundary of a positively (resp. negatively) twisted band attached to the $i$-th and the $j$-th strands (see Figure \ref{fig:bennequin}). The elements in the set $\{\sigma_{i,j}\}_{1\leq i<j\leq n }$ are called the \emph{band generators}. 
\begin{figure}[htbp]
\begin{center}
\includegraphics*[width=100mm]{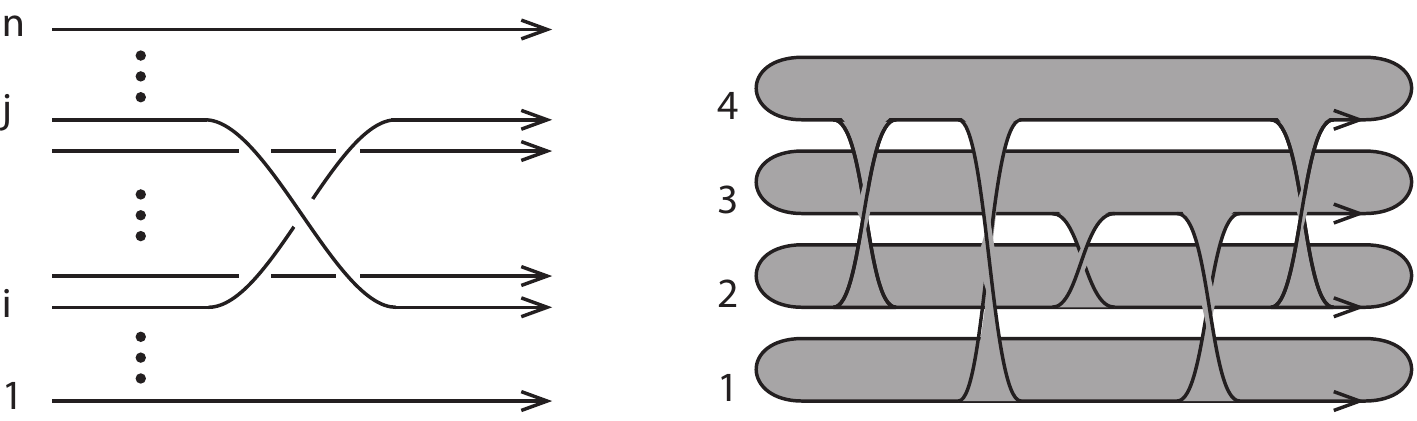}
\caption{(Left) The band generator $\sigma_{i,j}$. (Right) The Bennequin surface associated to 
$\sigma_{2,4}^{-1} \sigma_{1,4} \sigma_{2,3}^{-1} \sigma_{1,3} \sigma_{2,4}^{-1}$.}
\label{fig:bennequin}
\end{center}
\end{figure}

Band generators appear in many papers in the literature.  
The work of Bennequin in \cite{be} identifies braid words in  band generators and transverse knots and links in the standard tight contact $3$-sphere $(S^3, \xi_{std})$.

Rudolph uses band generators in a series of works including \cite{Ru83} where he develops and popularizes the concepts of quasipositive and strongly quasipositive knots and links. 
See also Rudolph's survey article \cite{R}.

Using band generators, Xu in \cite{X} gives a new presentation of $B_3$ and a new solution to the conjugacy problem in $B_3$. 
Birman, Ko and Lee in \cite{bkl} generalize the results of Xu to $B_n$. 
From the modern viewpoint, their work can be understood that the band generators give rise to a Garside structure, which is a certain combinatorial structure allowing us to solve various decision problems like the word and conjugacy problem (see \cite{ddgkm}). Today the Garside structure defined by band generators is called the \emph{dual Garside structure} on $B_n$.

\noindent
{\bf Conventions: }
In this paper, unless otherwise stated, we assume the following: 
\begin{itemize}
\item
Every contact structure is co-oriented.
\item
Every braid word $w \in B_n$ is written in the band generators $\sigma_{i,j}$, rather than in the standard Artin generators $\sigma_1,\dots,\sigma_{n-1}$. 
\item
By ``a link'' we mean an oriented, null-homologous knot or a link. 
\item By ``a transverse link'' we mean a transverse knot or a transverse link which is null-homologous and oriented so that it is positively transverse to the contact planes.
\end{itemize}

Let $\mT$ be a transverse link in $(S^3, \xi_{std})$. We say that a word $w$ in the band generators $\sigma_{i,j}$ is a \emph{braid word representative} of $\mT$ if the closure of the $n$-braid $w$ is $\mT$. 
For a braid word representative $w$ of $\mT$, starting with $n$ disjoint disks and attaching a twisted band for each $\sigma_{i,j}^{\pm 1}$ in the word $w$ we get a Seifert surface $F=F_w$ of $\mT$, which we call the \emph{Bennequin surface} associated to $w$ (see Figure \ref{fig:bennequin}).

A Bennequin surface is defined by Birman and Menasco in \cite[p.71]{bm2} for a {\em topological} link which generalizes   Bennequin's Markov surface \cite{be} (every Bennequin surface is a Markov surface, but there are Markov surfaces which are not Bennequin surfaces \cite[p.73]{bm2}), where they require one more additional condition that $F$ has the {\em maximal} Euler characteristic among all Seifert surfaces.
However, in this paper, $F_w$ may not necessarily realize the maximal Euler characteristic.

A braid $K \in B_{n}$ is called \emph{strongly quasipositive} \cite{R} if $K$ admits a word representative $w$ such that its associated Bennequin surface $F_w$ has no negatively twisted bands. 
That is, $w$ is a product of positive band generators. 
Using the dual Garside structure on $B_n$ with the band generators, one can check whether a given braid $K$ is conjugate to a strongly quasipositive braid or not \cite{bkl}.

Bennequin in \cite{be} shows that for a braid word representative $w$ of $\mT$, the self-linking number $sl(\mT)$ is given by the formula
\begin{equation}
\label{eqn:BEformula}
 sl(\mT)=-n(w)+\exp(w)
\end{equation}
where $n(w)$ and $\exp(w)$ denote the number of braid strands and the exponent sum of $w$. 
He also proves a fundamental inequality called the \emph{Bennequin inequality}
\begin{equation}
\label{eqn:BEinequality}
 sl(\mT) \leq -\chi(\mT) := 2g(\mT)-2+|\mT|, 
\end{equation}
where $g(\mT)$ denotes the $3$-genus (of the topological oriented link type) of $\mT$ and $|\mT|$ denotes the number of link components of $\mT$. 
The topological invariant $\chi(\mT)$ is called the {\em Euler characteristic of $\mT$}. We note that in general 
\begin{equation}\label{eqn:Eulerchar}
\chi(\mT) \geq \chi(F_w).
\end{equation}

\begin{definition}\label{def:defect}
To measure how far the Bennequin inequality (\ref{eqn:BEinequality}) is from the equality, we define the \emph{defect of the Bennequin inequality} for a transverse link $\mT$ by
\[ \delta(\mT): =\frac{1}{2}\bigl(-\chi(\mT)-sl(\mT)\bigr). \] 
Note that $\delta(\mT)$ is a non-negative integer. 
\end{definition}

For a braid word representative $w$ of $\mT$, 
Definition~\ref{def:defect},  (\ref{eqn:BEformula}) and (\ref{eqn:Eulerchar}) imply that 
\begin{eqnarray*}   
0 \leq \delta(\mT) 
&=& 
\frac{1}{2}(-\chi(\mT)-sl(\mT)) \\
&\leq &
\frac{1}{2}(-\chi(F_w)- sl(\mT)) \\
&=& \mbox{the number of negatively twisted bands in } F_w. 
\end{eqnarray*}
Therefore, we observe the following:

\begin{observation}
\label{observation:BE}
The genus of the Bennequin surface $F_{w}$ is equal to $g(\mT)$ 
if and only if the number of negatively twisted bands of $F_{w}$ is equal to $\delta(\mT)$.
In particular, for a strongly quasipositive braid word $w$, its Bennequin surface $F_{w}$ gives a minimum genus Seifert surface of $\mT$ and the Bennequin inequality is sharp, i.e. $\delta(\mT)=0$. 
\end{observation}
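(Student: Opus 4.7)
The plan is to make the chain of inequalities displayed just above Observation \ref{observation:BE} into an explicit proof, since both directions of the stated equivalence fall out of a single algebraic identity together with \eqref{eqn:Eulerchar}.

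First I will isolate the key identity. Let $c_+(w)$ and $c_-(w)$ denote the numbers of positively and negatively twisted bands used in $F_w$, and set $c(w) = c_+(w) + c_-(w)$. Since $F_w$ is obtained by attaching $c(w)$ bands to $n(w)$ disjoint disks,
\[
\chi(F_w) = n(w) - c(w), \qquad \exp(w) = c_+(w) - c_-(w).
\]
Combining with Bennequin's formula \eqref{eqn:BEformula}, a short computation gives
\[
\tfrac{1}{2}\bigl(-\chi(F_w) - sl(\mT)\bigr) = \tfrac{1}{2}\bigl(-(n(w)-c(w)) - (-n(w)+c_+(w)-c_-(w))\bigr) = c_-(w).
\]

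Next I will read off the equivalence. From Definition \ref{def:defect} and \eqref{eqn:Eulerchar},
\[
\delta(\mT) \;=\; \tfrac{1}{2}\bigl(-\chi(\mT) - sl(\mT)\bigr) \;\leq\; \tfrac{1}{2}\bigl(-\chi(F_w) - sl(\mT)\bigr) \;=\; c_-(w),
\]
with equality if and only if $\chi(F_w) = \chi(\mT)$. Because the braid closure construction forces $\partial F_w = \mT$, in particular $|\partial F_w| = |\mT|$, and therefore $\chi(F_w) = \chi(\mT)$ is equivalent to $g(F_w) = g(\mT)$. This yields both directions of the stated equivalence.

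Finally, the ``in particular'' clause is immediate: if $w$ is strongly quasipositive then $c_-(w) = 0$, so the displayed chain forces $0 \leq \delta(\mT) \leq 0$, whence $\delta(\mT) = 0$ and equality holds throughout, so $\chi(F_w) = \chi(\mT)$ and $F_w$ realizes the minimum genus. The argument is pure bookkeeping, so I do not anticipate any serious obstacle; the only point requiring a moment of care is matching the boundary component counts of $F_w$ and $\mT$ when translating between Euler characteristic and genus, which is handled by the braid closure construction itself.
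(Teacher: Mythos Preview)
Your proof is correct and follows exactly the approach the paper uses: the paper derives Observation~\ref{observation:BE} directly from the displayed chain of inequalities preceding it, and your argument simply makes that chain explicit by introducing the notation $c_\pm(w)$ and verifying the identity $\tfrac{1}{2}(-\chi(F_w)-sl(\mT))=c_-(w)$. The only addition is your remark that $|\partial F_w|=|\mT|$ is needed to pass between Euler characteristic and genus, which the paper leaves implicit.
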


Related to Observation \ref{observation:BE} we conjecture the following:

\begin{conjecture}
\label{conjecture:generalSQP} {\em
Every transverse link $\mT$ in $(S^3, \xi_{std})$ is represented by a braid word $w$ whose Bennequin surface $F_w$ contains $\delta(\mT)$ negative bands. 
Equivalently, due to Observation~\ref{observation:BE}, every $\mT$ bounds a Bennequin surface of genus $g(\mT)$. }
\end{conjecture}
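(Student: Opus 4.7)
The plan is to prove the equivalent form of the conjecture: every transverse link $\mT$ bounds a minimum genus Seifert surface that is realized as a Bennequin surface $F_w$ for some braid representative $w$ of $\mT$. By Observation~\ref{observation:BE}, once we produce any braid word $w$ for $\mT$ with $\chi(F_w) = \chi(\mT)$, the self-linking formula~(\ref{eqn:BEformula}) automatically forces the number of negative bands of $F_w$ to equal $\delta(\mT)$. Thus the problem reduces entirely to exhibiting a ``band-like'' minimum genus Seifert surface for $\mT$ compatible with the braid axis.

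The concrete steps I would carry out are as follows. Fix an arbitrary braid word representative $w_0$ of $\mT$ with Bennequin surface $F_{w_0}$, together with an abstract minimum genus Seifert surface $\Sigma$ for the underlying topological link type. Isotope $\Sigma$ so that it is transverse to the braid axis $A$ away from $\partial \Sigma = \mT$, inheriting a Birman--Menasco braid foliation whose leaves are components of $\Sigma$ intersected with pages of the disk open book. Simplify this foliation by the standard toolkit of exchange moves, changes of foliation, and \emph{transverse} Markov moves (positive stabilizations, destabilizations, and conjugations) until every singular leaf reduces to a single twisted band joining two disk leaves. At that stage, $\Sigma$ decomposes into $n$ disk leaves connected by bands, each contributing a generator $\sigma_{i,j}^{\pm 1}$, and reading them off along $A$ yields a braid word $w$ whose Bennequin surface $F_w$ is isotopic to $\Sigma$.

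The principal obstacle is controlling the foliation simplification within the transverse category. Generic braid foliations contain tiles whose removal by classical moves is obstructed, and because only positive stabilizations are permitted on the boundary, the Birman--Menasco calculus developed for topological links does not transfer verbatim. This is presumably why the paper's main theorem imposes a large fractional Dehn twist coefficient hypothesis in the disk open book case: a large twist coefficient rigidifies the possible boundary behaviors and restricts the foliation enough to force it to become band-shaped after finitely many moves. In the fully general setting, I would expect that proving Conjecture~\ref{conjecture:generalSQP} requires genuinely new input, perhaps combining braid foliations with contact-geometric invariants that detect sharpness of the Bennequin inequality, in order to certify that the number of negative bands can be pushed all the way down to the lower bound $\delta(\mT)$.
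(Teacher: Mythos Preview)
The statement you are addressing is labeled \emph{Conjecture} in the paper and is not proved there in full generality; the paper establishes it only under the large FDTC hypothesis of Theorem~\ref{theorem:main2}. So there is no ``paper's own proof'' to compare against, and your proposal is appropriately framed as an approach that runs into a genuine obstruction rather than as a complete argument.

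Your outlined strategy---put a minimum genus Seifert surface in braid-foliated position and simplify the foliation down to $aa$-tiles via Markov moves---is exactly the mechanism the paper uses. For the \emph{topological} version (Theorem~\ref{theorem:BEsurface_top}) the paper first kills $c$-circles, then applies Lemma~\ref{lemma:ab-remove} repeatedly: each $ab$-tile is removed by a stabilization along its unstable separatrix, and once all $ab$- and $bb$-tiles are gone the surface is Bennequin. The obstacle you identify is precisely the one that blocks this in the transverse category: Lemma~\ref{lemma:ab-remove} says the stabilization needed to remove an $ab$-tile has sign \emph{opposite} to the sign of the tile's hyperbolic point, so removing a positive $ab$-tile forces a \emph{negative} stabilization, which destroys the transverse type. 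The paper's partial result circumvents this not by new moves but by using the FDTC bound (via Proposition~\ref{prop:PreBennequin2} and Theorem~\ref{lemma:FDTC}) to guarantee $e_-\le 1$, so that at most a single positive stabilization along a negative $ab$-tile suffices. Your diagnosis of where the difficulty lies, and why the FDTC hypothesis helps, is accurate.
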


In Conjecture~\ref{conjecture:generalSQP}, we do not require that the braid word $w$ realizes the {\em braid index} of the transverse link $\mT$ defined by  
\[ b(\mT):= \min\{n \in \Z_{> 0} \: | \: \mT \mbox{ has an } n\mbox{-braid word representative}\}.\]  
In fact, in \cite{HS} Hirasawa and Stoimenow give an example $\mT$  of $b(\mT)=4$ represented by 
$$\mT = \sigma_{1,2} (\sigma_{2,4})^2 (\sigma_{1,2})^{-1} \sigma_{1,3} \sigma_{1,2} (\sigma_{2,4})^{-1} (\sigma_{1,2})^{-2} (\sigma_{1,3})^{-2}$$ 
(note the sign convention is altered here) and none of whose Bennequin surfaces consisting of four disks and twisted bands have the genus $g(\mT)=3$.

However, studying the open book foliation of the genus $3$ surface depicted in \cite[Fig 2 (b)]{HS} we can verify that one positive stabilization (cf Figure~\ref{fig:stabilization}) of this 4-braid produces a 5-braid representative of $\mT$ that bounds a Bennequin surface of genus $g(\mT)=3$ as sketched in \cite[Fig 2 (d)]{HS}. 
Concerning the the braid index, we give the following stronger version of Conjecture~1.  

\noindent
{\bf Stronger Form of Conjecture 1.}   {\em
Every transverse link $\mT$ in $(S^3, \xi_{std})$ is represented by a braid word $w$ of the braid index at most $b(\mT)+\delta(\mT)$ such that its Bennequin surface $F_w$ contains $\delta(\mT)$ negative bands.  }

Under a condition of large fractional Dehn twist coefficient (FDTC in short, and see the definition in Section~\ref{section:FDTC}), Conjecture~\ref{conjecture:generalSQP} holds as stated in Theorem~\ref{theorem:main2}. 

A special case of Conjecture \ref{conjecture:generalSQP} where $\delta(\mT)=0$ is of our interest.

\begin{conjecture}
\label{conjecture:SQP} {\em
For a transverse link $\mT$ in $(S^3, \xi_{std})$, the Bennequin inequality is sharp if and only if $\mT$ is represented by a strongly quasipositive braid.}

\noindent
\textbf{Stronger Form of Conjecture 2.} {\em
For a transverse link $\mT$ in $(S^3, \xi_{std})$, the Bennequin inequality is sharp if and only if $\mT$ is represented by a strongly quasipositive braid of the braid index $b(\mT)$.}
\end{conjecture}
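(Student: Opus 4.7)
The backward direction of both statements is immediate from Observation~\ref{observation:BE}: any Bennequin surface of a strongly quasipositive word has no negative bands, hence $\delta(\mT)=0$. So the entire content lies in the forward direction, where one starts from $\delta(\mT)=0$ and must produce a strongly quasipositive braid representative (of braid index $b(\mT)$ in the stronger form).

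The plan is to bootstrap from the large FDTC case guaranteed by Theorem~\ref{theorem:main2}. Starting from any braid word representative $w$ of $\mT$ with $n(w)=b(\mT)$, I would perform a sequence of positive Markov stabilizations to obtain a word $w'$ whose fractional Dehn twist coefficient exceeds the threshold needed for Theorem~\ref{theorem:main2} to apply. Since transverse Markov moves preserve the transverse link type and positive stabilization preserves $sl$ and $-\chi$, the hypothesis $\delta(\mT)=0$ is inherited by $w'$. Theorem~\ref{theorem:main2} then yields a Bennequin surface for $w'$ with zero negative bands, proving the weak form of Conjecture~\ref{conjecture:SQP}.

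For the stronger form I would then attempt to reverse the stabilization process within the strongly quasipositive monoid: show that a strongly quasipositive $m$-braid whose transverse closure admits an $n$-braid representative with $n<m$ can be conjugated in $B_m$ into a word with a free final strand, i.e.\ into a positive stabilization of a strongly quasipositive $(m-1)$-braid. Inducting on $m-n$ would descend all the way to braid index $b(\mT)$. Here I would use the dual Garside structure of \cite{bkl}: the conjugacy algorithm for the dual Garside normal form keeps one inside the positive band monoid, and Xu-type reductions in the geodesic length on the cyclic sliding graph are a natural tool to locate an exposable strand. The open book foliation of the corresponding Bennequin surface would be used to certify that a destabilization is realizable.

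The main obstacle is precisely this descent step: while positive stabilization manifestly preserves strong quasipositivity, its converse is much more subtle, since a strongly quasipositive word may hide an ``exposable'' strand only after a nontrivial cyclic sliding or transverse exchange move, and no purely algebraic criterion in $B_n$ is known for detecting this. A secondary obstacle is the stabilization count at the first step: establishing an explicit bound on the number of positive stabilizations required to push the FDTC above the Theorem~\ref{theorem:main2} threshold, in terms of the initial word $w$, would be needed even to make the weak-form argument fully effective.
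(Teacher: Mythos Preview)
The statement you are trying to prove is Conjecture~\ref{conjecture:SQP}, which the paper leaves \emph{open}: only the special cases in Corollary~\ref{cor:3braids} (closed $3$-braids) and Corollary~\ref{cor:main2} (closed braids with FDTC $>1$) are established. So there is no proof in the paper to compare your attempt against, and the question is simply whether your plan could work.

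It cannot, because the bootstrapping step already fails: positive Markov stabilization never pushes the FDTC above the threshold of Theorem~\ref{theorem:main2}. If $\beta\in B_n$ and we regard $\beta$ as an element of $B_{n+1}$ via the standard inclusion, then $\beta$ fixes the obvious arc from $\partial D^{2}$ to the new puncture $p_{n+1}$, and hence has FDTC equal to $0$ in $B_{n+1}$ (this is exactly the mechanism invoked in Example~\ref{ex2}, via \cite[Lemma~4.1]{ik2}). Combining this with property~(iii) of Example~\ref{example:main} (namely $c(\sigma_{n,n+1})=0$ in $B_{n+1}$ for $n\geq 2$) and the quasi-morphism bound~(i), the positively stabilized braid satisfies
\[
c\bigl(id,\widehat{\beta\,\sigma_n},\partial D^{2}\bigr)\;\leq\;0+0+1\;=\;1.
\]
Iterating, after \emph{any} number of positive stabilizations the resulting closed braid still has FDTC at most $1$; for instance, repeatedly stabilizing the trivial $1$-braid produces $\sigma_1\sigma_2\cdots\sigma_{n-1}\in B_n$, a conjugate of the element $\delta$ of Remark~\ref{remark:dualGarside}, whose FDTC is $1/n\to 0$. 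Since Theorem~\ref{theorem:main2} with $\delta(\mT)=0$ requires the strict inequality $c(id,K,\partial D^{2})>1$, no amount of positive stabilization puts you in range to invoke it. Your descent step for the stronger form (destabilizing inside the strongly quasipositive monoid) is a separate and well-known difficulty, but the argument already breaks before you reach it.
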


The statement of Conjecture~\ref{conjecture:SQP} has been existing for more than a decade as a question or as a conjecture among a number of mathematicians, including Etnyre, Hedden \cite[Conjecture 40]{H2}, Rudolph and Van Horn-Morris. 

Under a condition on large FDTC, both Conjecture~\ref{conjecture:SQP} and its stronger form hold as stated in Corollary~\ref{cor:main2}.

Using Hedden's result of {\em topological} fibered knots \cite[Theorem 1.2]{he}, we can immediately show that Conjecture~\ref{conjecture:SQP} holds for fibered transverse knots in $(S^3, \xi_{std})$. More generally, Etnyre and Van Horn-Morris give a characterization of fibered transverse links in general contact 3-manifolds on which the Bennequin-Eliashberg inequality (cf. Theorem~\ref{thmBE}) is sharp \cite[Theorem 1.1]{ev}.

The aim of this paper is to study these conjectures in the setting of general contact 3-manifolds. 

First we recall a fundamental fact repeatedly used in this paper: 
In a general closed oriented contact 3-manifold supported by an open book $(S, \phi)$, every closed braid with respect to $(S, \phi)$ can be seen as a transverse link. 
Conversely, every transverse link can be represented by a closed braid with respect to $(S, \phi)$, which is uniquely determined up to positive braid stabilizations, positive braid destabilizations and braid isotopy (see \cite{be, OS} for the case of disk open book $(D^2, id)$ and \cite{MM, Pav, P2} for general case).

Next, we set up some terminologies.

\begin{definition}
Let $\mT$ be a null-homologous transverse link in a contact 3-manifold $(M,\xi)$. We say that $\alpha \in H_{2}(M,\mT;\Z)$ is a \emph{Seifert surface class} if $\alpha =[F]$ for some Seifert surface $F$ of $\mT$. This is equivalent to $\alpha \in \partial^{-1}([\mT])$, where $[\mT] \in H_{1}(\mT;\Z)$ is the fundamental class of $\mT \cong S^{1}\cup \cdots \cup S^{1}$ and $\partial: H_{2}(M,\mT;\Z) \rightarrow H_{1}(\mT;\Z)$ is the boundary homomorphism of the long exact sequence of the pair $(M,\mT)$. 
Let $sl(\mT,\alpha)$ denote the self-linking number of $\mT$ with respect to $\alpha$.
We say that a Seifert surface $F$ of $\mT$ is an \emph{$\alpha$-Seifert surface} if $[F]=\alpha \in H_{2}(M,\mT;\Z)$. 
\end{definition}

\begin{definition}

Let $g(F)$ be the genus of $F$ and $\chi(F)$ be the Euler characteristic of $F$. 
We define the {\em genus} and the {\em Euler characteristic of $\mT$ with respect to $\alpha$} by 
\[ g(\mT,\alpha) := \min \{g(F) \: | \: F \mbox{ is an } \alpha\mbox{-Seifert surface of } \mT \},\]
$$
\chi(\mT, \alpha) := \max \{\chi(F)\: | \: F \mbox{ is an } \alpha\mbox{-Seifert surface of } \mT \}.
$$
We have $\chi(\mT,\alpha)=2-2g(\mT,\alpha)-|\mT|$,  where $|\mT|$ denotes the number of link components of $\mT$.

\end{definition}

We recall a theorem of Eliashberg.
\begin{theorem}[The Bennequin-Eliashberg inequality \cite{E}]\label{thmBE}
The contact manifold $(M,\xi)$ is tight if and only if 
for any null-homologous transverse link $\mT$ and its Seifert class $\alpha$ we have $$sl(\mT, \alpha)\leq - \chi(\mT, \alpha).$$

For an overtwisted contact manifold $(M,\xi)$, 
the same inequality holds for any null-homologous, non-loose transverse link $\mT$ and its Seifert class $\alpha$. 

\end{theorem}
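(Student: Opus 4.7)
The plan is to prove both directions via characteristic foliation arguments on Seifert surfaces. For the forward (tight implies inequality) direction, I pick an $\alpha$-Seifert surface $F$ of $\mT$ realizing $\chi(F)=\chi(\mT,\alpha)$ and perturb it so that the characteristic foliation $\mathcal{F}_\xi$ on $F$ has only Morse-type singularities, with $\partial F=\mT$ appearing as a transverse closed leaf. Writing $e_\pm,h_\pm$ for the numbers of elliptic and hyperbolic singularities of each sign, the Poincar\'e--Hopf formula gives $\chi(F)=(e_++e_-)-(h_++h_-)$, while the standard self-linking formula reads $sl(\mT,\alpha)=(e_--h_-)-(e_+-h_+)$, so
\[
-\chi(F)-sl(\mT,\alpha)=2(h_- - e_-).
\]
The forward inequality therefore reduces to arranging $e_-=0$.

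To kill the negative elliptic singularities I would invoke Giroux's elimination lemma: an elliptic--hyperbolic pair of the same sign joined by a separatrix of $\mathcal{F}_\xi$ can be canceled by a $C^0$-small isotopy of $F$ fixing $\partial F$. The tightness hypothesis enters as follows: if after all possible eliminations some negative elliptic singularity $e$ remained unpaired, then an enclosed basin of its unstable manifold would furnish an embedded disk $\Delta \subset M$ with Legendrian boundary of Thurston--Bennequin number $0$, i.e.\ an overtwisted disk, contradicting tightness. Iterating the elimination therefore yields $e_-=0$ and the desired bound. The non-loose assertion follows by the identical argument, since the isotopies can be performed inside a neighborhood of $\Int F$ in the tight complement of $\mT$, where overtwisted disks are unavailable by non-looseness.

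For the converse, assuming $(M,\xi)$ is overtwisted I would produce a null-homologous transverse unknot violating the inequality. Let $D$ be an overtwisted disk, so $\partial D$ is a Legendrian unknot with $tb(\partial D)=0$. Its two transverse push-offs satisfy $sl=tb\mp r$, and the sign choice with $\mp r(\partial D)\geq 0$ yields a null-homologous transverse unknot $\mT$ bounding a disk $D'$ isotopic to $D$ with
\[
sl(\mT,[D'])\;\geq\;0\;>\;-1\;=\;-\chi(D'),
\]
contradicting the inequality and completing the proof.

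The main obstacle is the closing argument in the tight direction: showing that the elimination process really terminates with $e_-=0$. This requires a careful global analysis of $\mathcal{F}_\xi$ on $F$---tracking unstable manifolds of negative elliptic singularities, excluding limit cycles and heteroclinic chains with no cancellation partner, and producing an honest embedded disk in $M$ whose Legendrian boundary has $tb=0$. The passage from ``unpaired negative elliptic singularity on $F$'' to ``overtwisted disk in $M$'' is the technical heart of Eliashberg's original argument, and what one must put in to move beyond the topological Bennequin calculation.
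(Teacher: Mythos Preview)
The paper does not prove this theorem: it is quoted as a classical result, citing Eliashberg \cite{E} for the tight characterization and attributing the non-loose statement to \'Swi\k{a}tkowski with a reference to Etnyre \cite{E2}. There is therefore no ``paper's own proof'' to compare against.

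That said, your outline is exactly the standard Eliashberg argument via the characteristic foliation, and the numerical identities you wrote down are correct (they match the open-book versions in Lemma~\ref{lemma:sl-from-ob}). Two small comments. First, for a negative elliptic singularity the relevant basin is the \emph{stable} manifold (it is a sink under the usual orientation convention), not the unstable one; this is only a terminological slip. Second, in the converse direction you can be more direct: the paper itself notes (just after Definition~1.8) that the transverse push-off of the boundary of an overtwisted disk is an unknot $U$ bounding a disk $D$ with $sl(U,[D])=1>-1=-\chi(D)$, so no case analysis on $r(\partial D)$ is needed. Your honest identification of the ``closing argument'' (producing an embedded overtwisted disk from a surviving negative elliptic) as the technical heart is accurate; that step genuinely requires the careful analysis of limit cycles and basin boundaries carried out in Eliashberg's original paper, and is where a proof sketch becomes a proof.
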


The second statement is attributed to \'Swiatkowski and a proof can be found in Etnyre's paper \cite[Proposition 1.1]{E2}.

Theorem~\ref{thmBE} guides us to introduce the following invariant.

\begin{definition}
We define the \emph{defect of the Bennequin-Eliashberg inequality with respect to $\alpha$} by
\[ \delta(\mT,\alpha) := \frac{1}{2}\bigl(-\chi(\mT,\alpha)-sl(\mT,\alpha)\bigr).\]
\end{definition}

Note that $\delta(\mT,\alpha)$ is an integer and it can be any negative integer when $\xi$ is overtwisted: To see this, we observe that a transverse push-off of an overtwisted disk gives an transverse unknot $U$ bounding a disk, $D$, with $sl(U,[D])=1$ and $\delta(U,[D]) = -1$. 
Taking some boundary connect sum of $n$ copies of $D$ (with bands each of which contains one positive hyperbolic point as illustrated in Figure~\ref{fig:stabilization} (i)) we get a disk, $D_n$, with $sl(\partial D_n, [D_n])= 2n-1$ and $\delta(\partial D_n,[D_n])=-n$.

In Definition~\ref{definition:BEsurface_geom}, we define 
an {\em $\alpha$-Bennequin surface} with respect to a general open book $(S,\phi)$ that supports a general contact $3$-manifold as an $\alpha$-Seifert surface admitting a disk-band decomposition adapted to the open book $(S,\phi)$. 
We say that a closed braid with respect to $(S, \phi)$ is {\em $\alpha$-strongly quasipositive} if it is the boundary of an $\alpha$-Bennequin surface without negatively twisted bands. 
We say that an $\alpha$-Bennequin surface $F$ is a \emph{minimum genus $\alpha$-Bennequin surface} if $g(F)=g(\mT,\alpha)$.

The definition of $\alpha$-strongly quasipositive has been discussed by Etnyre, Hedden, and Van Horn-Morris since around 2009. 
It was formally introduced by Baykur, Etnyre, Hedden, Kawamuro and Van Horn-Morris in the SQuaRE meeting at the American Institute of Mathematics in July 2015, and is printed in the official SQuaRE report \cite{B}. Later, Ito independently came up with the same definition. 
Hayden also has the same definition \cite[Definition 3.3]{Hayden}.

As we will see in Lemma~\ref{lemma:deltacondition}, if $\mT$ bounds a minimum genus $\alpha$-Bennequin surface, then $\delta(\mT,\alpha) \geq 0$. We expect that the converse is true:

\begin{conjecture}
\label{conjecture:minimumBEsurface} {\em
Let $(S,\phi)$ be an open book decomposition supporting a contact 3-manifold $(M,\xi)$.
Let $\mT$ be a null-homologous transverse link in $(M,\xi)$ with a Seifert surface class $\alpha \in H_{2}(M,\mT;\Z)$.
If $\delta(\mT,\alpha) \geq 0$ then $\mT$ bounds a minimum genus $\alpha$-Bennequin surface with respect to $(S,\phi)$.}
\end{conjecture}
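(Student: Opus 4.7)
The plan is to isotope a minimum-genus $\alpha$-Seifert surface into an $\alpha$-Bennequin surface without changing its genus, using open book foliation techniques.

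Choose a Seifert surface $F_{0}$ with $[F_{0}]=\alpha$ and $\chi(F_{0})=\chi(\mT,\alpha)$. After a $C^{0}$-small perturbation fixing $\partial F_{0}=\mT$ we may assume that $F_{0}$ carries an open book foliation $\mathcal{F}_{ob}(F_{0})$ with respect to $(S,\phi)$, whose singularities are positive and negative elliptic points on the binding together with positive and negative hyperbolic points in the interior. Let $e_{\pm}$ and $h_{\pm}$ denote their counts. The standard identities
\[
\chi(F_{0})=(e_{+}+e_{-})-(h_{+}+h_{-}),\qquad sl(\mT,\alpha)=-(e_{+}-e_{-})+(h_{+}-h_{-})
\]
yield $\tfrac{1}{2}\bigl(-\chi(F_{0})-sl(\mT,\alpha)\bigr)=h_{-}-e_{-}$, which equals $\delta(\mT,\alpha)$ because $F_{0}$ is minimum genus. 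The hypothesis $\delta(\mT,\alpha)\ge 0$ therefore translates into the counting inequality $h_{-}\ge e_{-}$.

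An $\alpha$-Bennequin surface is characterized by an open book foliation with positive elliptic points only (one per disk in the disk-band decomposition) and hyperbolic points of either sign (one per band). The task is then to run the standard moves of open book foliation theory -- elliptic-hyperbolic pair cancellations, $bb$-tile eliminations, $cc$-annulus eliminations -- all of which are ambient isotopies and hence preserve the genus of $F_{0}$, until the foliation takes disk-band form. The crucial step is to pair each negative elliptic point with a distinct negative hyperbolic point and cancel them; each such cancellation decreases $(e_{-},h_{-})$ by $(1,1)$, and the inequality $h_{-}\ge e_{-}$ is exactly what is needed to drive $e_{-}$ down to zero, leaving $h_{-}=\delta(\mT,\alpha)$ negative hyperbolic points that become the cores of the $\delta(\mT,\alpha)$ negative bands of the output surface.

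The main obstacle, and presumably the reason this remains a conjecture, is that the counting inequality $h_{-}\ge e_{-}$ does not automatically produce the geometric adjacencies required for pair cancellation: a negative elliptic point cancels only against a negative hyperbolic point whose separatrix terminates at it. Creating such adjacencies requires global combinatorial rearrangements of the foliation that migrate negative hyperbolic points through $bb$-tiles and past $c$-circles to reach the targeted negative elliptic points, without introducing new negative elliptic points, changing $[F_{0}]=\alpha$, or increasing the genus. This is exactly the kind of control that the large-FDTC hypothesis in Theorem~\ref{theorem:main2} buys in the disk open book case, and designing a replacement mechanism that works over an arbitrary $(S,\phi)$ is the heart of the conjecture.
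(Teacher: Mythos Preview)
The statement is labeled a conjecture in the paper and is not proved there; the paper instead gathers evidence for it (Theorem~\ref{theorem:BEsurface_top} for the topological case, Theorem~\ref{theorem:main2} under a large-FDTC hypothesis for the disk open book) and explains, via Proposition~\ref{prop:newformulation} and Theorem~\ref{theorem:Q3to12}, why its truth would be significant. Your concluding paragraph shows you understand this, so what you have written is really an outline of a strategy together with its obstruction rather than a proof, and there is no proof in the paper to compare it against.

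One technical point in your outline deserves correction. The move that removes a negative elliptic point by cancelling it against an adjacent negative hyperbolic point is \emph{not} an ambient isotopy of the surface with boundary fixed: it is a \emph{positive stabilization} of the closed braid $K$ along a negative $ab$-tile (Lemma~\ref{lemma:ab-remove} and Figure~\ref{fig:stabilization}). This replaces $K$ by a transversely isotopic braid of higher braid index, and it works only when the hyperbolic point in that $ab$-tile is negative; a \emph{positive} $ab$-tile can be removed only by a \emph{negative} stabilization, which destroys the transverse type of $\mT$. This is exactly why the topological version (Theorem~\ref{theorem:BEsurface_top}) goes through---both signs of stabilization are permitted there---while the transverse version remains open. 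So your diagnosis of the obstruction is essentially right, but the mechanism is not a surface isotopy rel boundary: the real issue is that one is forbidden to use negative stabilizations, and hence one must first arrange that every negative elliptic point lies on some \emph{negative} $ab$-tile before it can be eliminated. The large-FDTC hypothesis in Theorem~\ref{theorem:main2} is what forces this adjacency in the disk case.
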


We list evidences for Conjecture~\ref{conjecture:minimumBEsurface}. 

First, we show that the minimum genus Bennequin surface always exists, if we forget the contact structure and only consider the \emph{topological} link type of $\mT$ in $M$.  
The statement is proved by Birman and Finkelstein in \cite[Theorem 4.2]{bf} for a special case where $M=S^3$ and $(S, \phi)=(D^2, id)$ the disk open book.

\begin{theorem}[proved in Section~\ref{subsec:4.3}]
\label{theorem:BEsurface_top}
Let $M$ be an oriented, closed 3-manifold with an open book decomposition $(S,\phi)$. 
For every null-homologous topological link type $\mK$ in $M$ and its Seifert surface class $\alpha \in H_{2}(M,\mK;\Z)$, $\mK$ bounds a minimum genus $\alpha$-Bennequin surface with respect to $(S,\phi)$.
\end{theorem}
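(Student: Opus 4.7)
The plan is to generalize Birman and Finkelstein's proof from the standard disk open book of $S^{3}$ to an arbitrary open book $(S,\phi)$, by replacing the braid foliation machinery used in \cite{bf} with the open book foliation machinery. First, realize $\mK$ as a closed braid $\widehat{w}$ with respect to $(S,\phi)$ via the generalized Alexander theorem (\cite{MM, Pav, P2}). Then take any $\alpha$-Seifert surface $F_{0}$ of $\mK$ with $g(F_{0})=g(\mK,\alpha)$, and aim to isotope $F_{0}$ in $M$ (rel $\mK$) to a surface admitting a disk-band decomposition compatible with $(S,\phi)$, i.e.\ to an $\alpha$-Bennequin surface. Since the isotopy preserves both the class $\alpha$ and the homeomorphism type of the surface, the result is automatically a minimum genus $\alpha$-Bennequin surface.

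After perturbing $F_{0}$ into general position with respect to $(S,\phi)$, the intersection of $F_{0}$ with the pages equips $F_{0}$ with an open book foliation $\mF(F_{0})$: the leaves are the connected components of $F_{0}\cap S_{t}$, the elliptic singularities are the points $F_{0}\cap B$ on the binding $B$, and the hyperbolic singularities are the generic tangencies of $F_{0}$ with pages. The proof proceeds by simplifying $\mF(F_{0})$ through genus-preserving ambient isotopies of the following three kinds: (a) cap off a circle leaf which bounds a disk in a page by pushing $F_{0}$ across that disk; (b) remove boundary-parallel $b$-arcs in the pages by a small isotopy; and (c) apply the standard foliation-change and destabilization moves of open book foliation theory to eliminate negative elliptic points and reduce the total number of singularities.

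The key point is that each such move either is an ambient isotopy, hence automatically preserves $g(F_{0})$, or, in case (a), uses the minimality of $g(F_{0})=g(\mK,\alpha)$ to conclude that the region of $F_{0}$ on one side of the circle leaf is itself a disk; otherwise, compressing $F_{0}$ along the disk in the page would strictly lower the genus, contradicting minimality. After all simplifications, $\mF(F_{0})$ has no negative elliptic points and no circle leaves, and every $b$-arc connects two positive elliptic points. Consequently $F_{0}$ decomposes into one disk around each positive elliptic point (each contributing a single strand of $\widehat{w}$) together with one positively or negatively twisted band for each hyperbolic singularity, which is by definition an $\alpha$-Bennequin surface of genus $g(\mK,\alpha)$.

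The main obstacle is step (c): orchestrating a sequence of foliation-change moves that eliminates every negative elliptic point without reintroducing circle leaves or creating new negative elliptic points along the way. This requires a careful combinatorial analysis of the tile decomposition of $\mF(F_{0})$, analogous to the ``change of foliation'' arguments of \cite{bf} and of the authors' earlier open book foliation papers, and it is precisely here that the minimum genus hypothesis is used in an essential way, through the fact that no essential compression of $F_{0}$ is available to absorb the surgeries produced by the local moves.
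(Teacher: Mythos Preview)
Your step~(c) is the gap. Eliminating all negative elliptic points by isotopies rel $\mK$ (together with destabilizations and foliation-change moves) while preserving the genus is exactly the content of Conjecture~\ref{conjecture:minimumBEsurface}, which the paper leaves open. The minimum genus hypothesis does not buy you what you claim: incompressibility blocks genus-lowering compressions, but it does not supply the combinatorial mechanism needed to cancel a negative elliptic point against a hyperbolic point of the same sign without disturbing the boundary. (Note also that a $b$-arc always joins a positive elliptic point to a negative one, so once there are no negative elliptic points there are no $b$-arcs at all; your final description of the foliation is slightly off.)

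The paper's proof sidesteps this difficulty entirely by using the one freedom your proposal gives up: since only the \emph{topological} link type of $\mK$ matters, one may alter the closed braid representative $K$ by both positive \emph{and negative} stabilizations. After removing $c$-circles, each $ab$-tile is eliminated by pushing $K=\partial F$ across it (Lemma~\ref{lemma:ab-remove}); this is a positive stabilization when the tile is negative and a negative stabilization when the tile is positive. Former $bb$-tiles become $ab$-tiles and are removed in turn. When all $ab$- and $bb$-tiles are gone only $aa$-tiles remain, and $F$ is an $\alpha$-Bennequin surface. No foliation-change analysis is needed, and the minimum genus assumption is used only to guarantee that the resulting Bennequin surface has genus $g(\mK,\alpha)$.
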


Second, in Proposition \ref{prop:PreBennequin} we show that under some condition on the fractional Dehn twist coefficient, a transverse link bounds a minimum genus Seifert surface which is almost an $\alpha$-Bennequin surface.

Third, recall Bennequin's \cite[Proposition 3]{be}.  
The following stronger statement in \cite[Theorem 1]{bm2} is proved by Birman and Menasco, in which a subtle gap in \cite{be} concerning pouches is fixed: 
%
{\em Any minimal genus Seifert surface of a closed 3-braid is isotopic to a Bennequin surface with the same boundary.}
%
This statement implies that Conjecture \ref{conjecture:minimumBEsurface} holds for closed 3-braids with respect to the disk open book $(D^{2},id)$.

The following Proposition \ref{prop:newformulation} and Theorem \ref{theorem:Q3to12} motivate us to study Conjecture \ref{conjecture:minimumBEsurface}.

Due to Theorem~\ref{thmBE}, $(M,\xi)$ is tight if and only if $\delta(\mT,\alpha)\geq 0$ for all null-homologous $\mT$ and its Seifert class $\alpha$. 
Thus, if Conjecture \ref{conjecture:minimumBEsurface} is true  then we obtain a new formulation of tightness in terms of $\alpha$-Bennequin surfaces:

\begin{proposition}[proved in Section~\ref{subsec:4.3}] 
\label{prop:newformulation}
Let $(S,\phi)$ be an open book decomposition supporting a contact 3-manifold $(M,\xi)$.  
For every null-homologous transverse link $\mT$ in $(M,\xi)$ and  its Seifert surface class $\alpha$, we suppose that 
the link $\mT$ bounds a minimum genus $\alpha$-Bennequin surface with respect to $(S,\phi)$. 
Then $(M,\xi)$ is tight.

The converse of the above statement is true if Conjecture~\ref{conjecture:minimumBEsurface} is true. 
Namely, if Conjecture~\ref{conjecture:minimumBEsurface} is true and $(M,\xi)$ is tight, then 
for every null-homologous transverse link $\mT$ in $(M,\xi)$ and  every Seifert surface class $\alpha\in H_2(M,\mT;\Z)$, 
the link $\mT$ bounds a minimum genus $\alpha$-Bennequin surface with respect to $(S,\phi)$.
\end{proposition}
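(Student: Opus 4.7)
The plan is to reduce both directions of the proposition to the Bennequin--Eliashberg inequality (Theorem~\ref{thmBE}) by means of Lemma~\ref{lemma:deltacondition}, which states that whenever $\mT$ bounds a minimum genus $\alpha$-Bennequin surface one automatically has $\delta(\mT,\alpha)\geq 0$. With that lemma available, the proposition becomes a direct translation of Theorem~\ref{thmBE} into the language of Bennequin surfaces, and essentially no further geometric input is needed.

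For the first statement I would argue contrapositively. Assume $(M,\xi)$ is overtwisted. Then Theorem~\ref{thmBE} supplies a null-homologous transverse link $\mT$ and a Seifert surface class $\alpha\in H_2(M,\mT;\Z)$ with $sl(\mT,\alpha) > -\chi(\mT,\alpha)$, i.e.\ $\delta(\mT,\alpha) < 0$; an explicit witness is the transverse push-off $U$ of the Legendrian boundary of an overtwisted disk $D$, for which $\delta(U,[D]) = -1$ by the computation in the paragraph following the definition of $\delta(\mT,\alpha)$. By Lemma~\ref{lemma:deltacondition}, no such pair can bound a minimum genus $\alpha$-Bennequin surface, contradicting the hypothesis. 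Equivalently, and more directly, if every null-homologous transverse link bounds a minimum genus $\alpha$-Bennequin surface for every $\alpha$, then Lemma~\ref{lemma:deltacondition} gives $\delta(\mT,\alpha) \geq 0$ for all such pairs $(\mT,\alpha)$, which is precisely the Bennequin--Eliashberg inequality; the nontrivial direction of Theorem~\ref{thmBE} then forces $\xi$ to be tight.

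The converse is essentially tautological once Conjecture~\ref{conjecture:minimumBEsurface} is assumed. If $(M,\xi)$ is tight then the tight direction of Theorem~\ref{thmBE} yields $\delta(\mT,\alpha)\geq 0$ for every null-homologous transverse link $\mT$ and every Seifert surface class $\alpha$, at which point Conjecture~\ref{conjecture:minimumBEsurface} directly produces the required minimum genus $\alpha$-Bennequin surface. The only non-formal ingredient in the whole argument is Lemma~\ref{lemma:deltacondition}, whose proof will presumably rely on an open book analogue of the self-linking formula~(\ref{eqn:BEformula}) to identify $2\delta(\mT,\alpha)$ with the number of negatively twisted bands of a minimum genus $\alpha$-Bennequin surface. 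Granting that lemma, the present proposition presents no genuine obstacle: it is a clean repackaging of Theorem~\ref{thmBE} in the combinatorial language of disk--band decompositions compatible with the open book $(S,\phi)$.
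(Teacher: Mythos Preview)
Your proposal is correct and matches the paper's own proof essentially line for line: the paper also argues the first statement contrapositively via Lemma~\ref{lemma:deltacondition} (citing the overtwisted-disk push-off as an explicit witness with $\delta<0$), and derives the converse immediately from Theorem~\ref{thmBE} plus Conjecture~\ref{conjecture:minimumBEsurface}. Your anticipation of how Lemma~\ref{lemma:deltacondition} is proved---via the self-linking and Euler characteristic formulae in Lemma~\ref{lemma:sl-from-ob}, noting $e_-=0$ for a Bennequin surface---is also on target.
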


In the setting of general open books, 
Conjectures \ref{conjecture:generalSQP} and \ref{conjecture:SQP} can be extended to Conjectures~ \ref{conjecture:generalSQP}$'$ and \ref{conjecture:SQP}$'$, respectively, as below:   
Let $(S,\phi)$ be an open book decomposition supporting a contact 3-manifold $(M,\xi)$. 
Let $\mT$ be a null-homologous transverse link in $(M, \xi)$ and $\alpha \in H_2(M, \mT;\Z)$ be a Seifert surface class.   

\noindent
{\bf Conjecture \ref{conjecture:generalSQP}$'$.} {\em 
If $\delta(\mT, \alpha)\geq 0$ then $\mT$ bounds an $\alpha$-Bennequin surface with $\delta(\mT, \alpha)$ negative bands with respect to $(S, \phi)$. }

\noindent
{\bf Conjecture \ref{conjecture:SQP}$'$.} {\em
If $\delta(\mT, \alpha)=0$ (in which case we say that the Bennequin-Eliashberg inequality is {\em sharp} on $(\mT, \alpha)$) then $\mT$ is represented by an $\alpha$-strongly quasipositive braid with respect to $(S, \phi)$. }

Conjecture~\ref{conjecture:SQP}$'$ is raised as a question in the SQuaRE report \cite{B}. It is also stated in \cite{B} that a strongly quasipositive link bounds a minimal genus Bennequin surface. 

We remark that for a general open book, the counterpart of the stronger form of Conjecture~\ref{conjecture:SQP} does not hold.  
In Example \ref{example2}, with a fixed open book $(S, \phi)$ of a contact manifold $(M, \xi)$, we give an example of transverse knot $\mT$ in $(M,\xi)$ with $\delta(\mT)=0$ which does bound a minimum genus Bennequin surface but any braid representative of $\mT$ realizing the minimum braid index with respect to $(S, \phi)$ cannot bound minimum genus Bennequin surfaces.

\begin{theorem}[proved in Section~\ref{subsec:4.3}] 
\label{theorem:Q3to12}
If Conjecture \ref{conjecture:minimumBEsurface} is true then Conjectures \ref{conjecture:generalSQP}$'$ and \ref{conjecture:SQP}$'$ are true. 
\end{theorem}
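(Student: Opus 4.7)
The plan is to assume Conjecture~\ref{conjecture:minimumBEsurface} and derive Conjectures~\ref{conjecture:generalSQP}$'$ and \ref{conjecture:SQP}$'$ simultaneously, by promoting Observation~\ref{observation:BE} to the setting of a general open book $(S,\phi)$. Fix a null-homologous transverse link $\mT$ in $(M,\xi)$ with Seifert surface class $\alpha\in H_2(M,\mT;\Z)$ satisfying $\delta(\mT,\alpha)\geq 0$. By Conjecture~\ref{conjecture:minimumBEsurface}, $\mT$ bounds a minimum genus $\alpha$-Bennequin surface $F$, so $\chi(F)=\chi(\mT,\alpha)$; it remains only to show that $F$ has exactly $\delta(\mT,\alpha)$ negatively twisted bands.

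The key ingredient is the following analogue of Bennequin's self-linking formula~(\ref{eqn:BEformula}) for $\alpha$-Bennequin surfaces with respect to $(S,\phi)$: if $n_-(F)$ denotes the number of negatively twisted bands in the disk-band decomposition of $F$ from Definition~\ref{definition:BEsurface_geom}, then
\[ sl(\mT,\alpha)=-\chi(F)-2\,n_-(F), \]
equivalently, $n_-(F)=\tfrac12\bigl(-\chi(F)-sl(\mT,\alpha)\bigr)$. Granting this formula, substituting $\chi(F)=\chi(\mT,\alpha)$ for the minimum genus surface produced above yields $n_-(F)=\delta(\mT,\alpha)$, which is precisely Conjecture~\ref{conjecture:generalSQP}$'$. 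The special case $\delta(\mT,\alpha)=0$ then forces $n_-(F)=0$, so $\mT=\partial F$ is the closure of an $\alpha$-strongly quasipositive braid by definition, giving Conjecture~\ref{conjecture:SQP}$'$.

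The main obstacle is the verification of the generalized self-linking formula above. The proof should mirror Bennequin's original calculation: the portion of $\partial F$ lying on page-pieces of $(S,\phi)$ inherits the contact framing, while each positively twisted band contributes $0$ and each negatively twisted band contributes $-2$ to the framing difference between the Seifert framing determined by $\alpha$ and the contact framing. One must check that the disk-band decomposition of Definition~\ref{definition:BEsurface_geom} cleanly separates these contributions, that the choice of $\alpha$ as the reference homology class introduces no correction terms, and that no pathology arises from pages of $(S,\phi)$ possibly having nontrivial topology. Modulo this generalized Bennequin formula, which should be immediate from the structural definition of $\alpha$-Bennequin surface developed earlier in the paper, the theorem reduces to the single identity $n_-(F)=\delta(\mT,\alpha)$ and the remaining bookkeeping is arithmetic.
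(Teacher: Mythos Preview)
Your approach is essentially the same as the paper's: obtain a minimum genus $\alpha$-Bennequin surface $F$ from Conjecture~\ref{conjecture:minimumBEsurface} and then verify that its number of negative bands equals $\delta(\mT,\alpha)$. The ``obstacle'' you identify is not one: the generalized self-linking formula you need is exactly Lemma~\ref{lemma:sl-from-ob} specialized to an $\alpha$-Bennequin surface, since by Definition~\ref{definition:BEsurface_geom} such a surface has region decomposition consisting only of $aa$-tiles, hence $e_-=0$ and $h_-=n_-(F)$, giving $sl(\mT,\alpha)=-(e_+-0)+(h_+-n_-(F))$ and $\chi(F)=(e_++0)-(h_++n_-(F))$, whence $-\chi(F)-sl(\mT,\alpha)=2n_-(F)$ as you claimed.
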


Theorem~\ref{theorem:Q3to12} and the above mentioned 3-braid result yield the following. 

\begin{corollary}\label{cor:3braids}
Let $\mT$ be a transverse link in $(S^3, \xi_{std})$ of the braid index $b(\mT)=3$ with respect to the disk open book $(D^{2},id)$. 
Then  
$\mT$ bounds a minimal genus Bennequin surface that consists of $\delta(\mT)$ negatively twisted bands, a number of positively twisted bands, and three disks.  

In particular, the Bennequin inequality is sharp on $\mT$ if and only if the 3-braid is (braid isotopic to) a strongly quasipositive braid. 
\end{corollary}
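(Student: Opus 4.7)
The plan is to deduce the corollary directly from the Birman-Menasco $3$-braid result cited just before Proposition~\ref{prop:newformulation}, together with Observation~\ref{observation:BE}. Fix a $3$-braid word $w$ representing $\mT$ with underlying topological link $\mK$, and pick a minimum genus Seifert surface $F$ of $\mK$, so $\chi(F)=\chi(\mK)=\chi(\mT)$. By Birman-Menasco, $F$ is ambient isotopic (preserving the closed-braid boundary) to a Bennequin surface $F_{w'}$ built from three disks and twisted bands corresponding to some $3$-braid word $w'$. Because the boundary is preserved as a closed $3$-braid, $w$ and $w'$ are conjugate in $B_3$; in particular $\exp(w)=\exp(w')$, so by the Bennequin formula~(\ref{eqn:BEformula}) the closure of $w'$ has self-linking number $sl(\mT)$ and realizes the transverse link $\mT$ itself.

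Now $F_{w'}$ is a minimum genus Bennequin surface of $\mT$ with three disks, so Observation~\ref{observation:BE} identifies the number of its negatively twisted bands with $\delta(\mT)$. This yields the first assertion of the corollary. For the ``in particular'' direction, if the Bennequin inequality is sharp on $\mT$ then $\delta(\mT)=0$, so $F_{w'}$ has no negative bands, $w'$ is a strongly quasipositive $3$-braid, and by conjugacy $w$ is braid isotopic to a strongly quasipositive $3$-braid. Conversely, if some conjugate $v$ of $w$ is strongly quasipositive, then the Bennequin surface $F_v$ carries no negative bands, and Observation~\ref{observation:BE} together with tightness of $\xi_{\mathrm{std}}$ (which forces $\delta(\mT)\ge 0$) gives $\delta(\mT)=0$.

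The main subtlety I anticipate is checking that the $3$-braid word $w'$ produced by the Birman-Menasco isotopy represents the same \emph{transverse} link as $w$, and not merely the same topological link $\mK$. This is where the hypothesis $b(\mT)=3$ is decisive: with braid index equal to $3$, destabilization is unavailable, and the closed-braid boundary is preserved by the isotopy, so $w$ and $w'$ land in the same conjugacy class in $B_3$ and in particular share an exponent sum, whence~(\ref{eqn:BEformula}) equates their self-linking numbers and forces their transverse types to agree. An alternative route, which I would mention as a cross-check, is to invoke Theorem~\ref{theorem:Q3to12}: the Birman-Menasco theorem verifies Conjecture~\ref{conjecture:minimumBEsurface} for every $3$-braid transverse link with respect to $(D^2,\mathrm{id})$, so Conjectures~\ref{conjecture:generalSQP}$'$ and~\ref{conjecture:SQP}$'$ hold in that restricted setting; the direct argument above additionally pins down that the Bennequin surface has exactly three disks.
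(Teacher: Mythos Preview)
Your argument is essentially the paper's own: the corollary is recorded there as an immediate consequence of the Birman--Menasco $3$-braid theorem together with Theorem~\ref{theorem:Q3to12}, and your direct route through Observation~\ref{observation:BE} simply unpacks the computation carried out in the proof of Theorem~\ref{theorem:Q3to12} (via Lemma~\ref{lemma:sl-from-ob}). Your observation that the Bennequin surface retains exactly three disks is the only thing not literally stated in Theorem~\ref{theorem:Q3to12}, and it follows, as you say, from the fact that the Birman--Menasco isotopy fixes the closed-braid boundary.

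One point to tighten, though it does not affect the conclusion: you write that equal exponent sums, hence equal self-linking numbers via~(\ref{eqn:BEformula}), ``force their transverse types to agree.'' That implication is false in general---transversely non-simple link types exist---and it is also unnecessary here. Since the Birman--Menasco isotopy fixes the boundary as a closed braid, the closures of $w$ and $w'$ are \emph{braid isotopic}, and braid-isotopic closed braids are the same transverse link by definition. Once you have conjugacy in $B_3$ you are done; the detour through $sl$ should be dropped. Likewise, in the converse direction Observation~\ref{observation:BE} already gives $\delta(\mT)=0$ outright for a strongly quasipositive word, so the separate appeal to tightness is redundant.
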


\subsection{Main results}

Our first main result Theorem~\ref{theorem:main1} confirms Conjecture \ref{conjecture:SQP}$'$ under some assumptions. 
Let $(S, \phi)$ be an open book and $C$ be a connected component of the binding of $(S, \phi)$, which we will call a \emph{binding component}. 
Let $K$ be a closed braid with respect to $(S, \phi)$ and $c(\phi,K,C)$ be the fractional Dehn twist coefficient (FDTC) of the closed braid $K$ with respect to the binding component $C$  (see Section \ref{section:FDTC} for the definition).

\begin{theorem}[proved in Section~\ref{sec:5}]
\label{theorem:main1}
Let $(S,\phi)$ be an open book decomposition supporting a contact 3-manifold $(M,\xi)$ and $\mT$ be a null-homologous transverse link in $(M,\xi)$ with a Seifert surface class $\alpha \in H_{2}(M,\mT;\Z)$. 
Assume the following: 
\begin{enumerate} 
\item[(i)] $S$ is planar.
\item[(ii)] $M$ does not contain a non-separating 2-sphere (i.e., $M$ does not contain an $S^{1}\times S^{2}$ in its connected summands). 
\item[(iii)] $\mT$ has a closed braid representative $K$ with respect to $(S, \phi)$ which bounds an $\alpha$-Seifert surface $F$ such that: 
\begin{enumerate}
\item[(iii-a)] $g(F)=g(\mT,\alpha)$.
\item[(iii-b)] Among all the binding components of $(S, \phi)$ only $C$ intersects $F$. 
\item[(iii-c)] $c(\phi,K,C)>1$.
\end{enumerate}
\end{enumerate}
Then $\delta(\mT,\alpha)=0$ if and only if $K$ is $\alpha$-strongly quasipositive with respect to $(S, \phi)$. In particular, $\delta(\mT,\alpha)=0$ if and only if $\mT$ is represented by an $\alpha$-strongly quasipositive braid. 
\end{theorem}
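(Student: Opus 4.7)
The proof rests on the open book foliation technology. Placed in good position, the $\alpha$-Seifert surface $F$ inherits from the pages of $(S,\phi)$ a singular foliation whose singularities are elliptic points along the binding and hyperbolic points in page interiors, each carrying a sign. Writing $e_\pm$ and $h_\pm$ for their counts, one has the standard identities
\[ \chi(F) = (e_+ + e_-) - (h_+ + h_-), \qquad sl(K,[F]) = -(e_+ - e_-) + (h_+ - h_-). \]
Under (iii-b), all elliptic points lie on the single binding component $C$, and under (iii-a) we have $\chi(F) = \chi(\mT, \alpha)$. An $\alpha$-Bennequin surface with no negative bands is precisely an $\alpha$-Seifert surface whose open book foliation satisfies $e_- = h_- = 0$: the positive elliptic points become the disks and the positive hyperbolic points the positively twisted bands of the disk-band decomposition.

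The ``if'' direction is short. If $K$ is $\alpha$-SQP with Bennequin surface $F_w$ having $e_-(F_w) = h_-(F_w) = 0$, the identities above yield $sl(K,\alpha) = -\chi(F_w) \geq -\chi(\mT,\alpha)$, hence $\delta(\mT,\alpha) \leq 0$. On the other hand, assumption (iii-c), together with (i) and (ii), is known in the open book foliation literature to force $K$ to be non-loose, so Theorem~\ref{thmBE} yields $\delta(\mT,\alpha) \geq 0$. Combining gives $\delta(\mT,\alpha) = 0$.

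For the ``only if'' direction, assume $\delta(\mT,\alpha) = 0$ and isotope the given $F$ into open book foliation position without changing its topological type or its boundary $K$. Combining the two identities with (iii-a) gives
\[ 0 = 2\delta(\mT,\alpha) = -\chi(F) - sl(K,[F]) = 2(h_- - e_-), \]
so $h_- = e_-$. It thus suffices to show $e_- = 0$; then the foliation of $F$ has only positive singularities, its canonical disk-band decomposition expresses $F$ as an $\alpha$-Bennequin surface with no negative bands, and $K$ is realized as $\alpha$-strongly quasipositive.

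Killing $e_-$ is the main obstacle and the heart of the argument. Suppose for contradiction a negative elliptic point $p$ exists on $C$. The b-arcs of the foliation incident to $p$ live in pages $S_t$ near $C$; iterating the monodromy $\phi$ pushes such an arc around $C$ and produces a controlled sequence of arcs in $S$. Planarity (i) ensures each such arc cuts off a disk region whose combinatorial type can be enumerated, and (ii) forbids evasive configurations supported by non-separating $2$-spheres, so the arcs in the orbit remain genuine essential arcs in $S$. A b-arc orbit analysis, of the type used to read FDTC off an open book foliation, then bounds the orbit length and forces $c(\phi,K,C) \leq 1$, contradicting (iii-c). Hence $e_- = 0$, and so $h_- = 0$. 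The delicate point is to ensure that the isotopies and bypass-type moves used to place $F$ in foliation position preserve both the minimum-genus condition (iii-a) and the Seifert class $\alpha$, so that the counts $e_\pm, h_\pm$ remain attached to a genuinely minimal $\alpha$-Seifert surface throughout.
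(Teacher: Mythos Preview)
Your ``only if'' direction has a genuine gap. You assert that ``an $\alpha$-Bennequin surface with no negative bands is precisely an $\alpha$-Seifert surface whose open book foliation satisfies $e_- = h_- = 0$,'' and later that once $e_- = h_- = 0$ the surface's ``canonical disk-band decomposition expresses $F$ as an $\alpha$-Bennequin surface.'' This is false: by Definition~\ref{definition:BEsurface_geom} an $\alpha$-Bennequin surface must have a region decomposition consisting \emph{only of aa-tiles}. A foliation with $e_- = h_- = 0$ may still contain c-circles, hence ac-annuli and cc-pants with positive hyperbolic points, and such a surface is not a Bennequin surface at all. Ruling out c-circles is a separate, nontrivial step (Proposition~\ref{prop:noac} in the paper), and it is precisely here --- not in the b-arc analysis --- that the planarity hypothesis (i) and the single-binding hypothesis (iii-b) are used. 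Example~\ref{ex1} exhibits a surface with $e_- = h_- = 0$ and $c(\phi,K,C)>1$ on a non-planar page whose boundary is \emph{not} strongly quasipositive, so this step cannot be skipped.

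There is a second gap. The FDTC bound you invoke (``b-arc orbit analysis \dots forces $c(\phi,K,C)\leq 1$'') is Theorem~\ref{lemma:FDTC}, and that result requires the open book foliation to be \emph{essential}. Merely isotoping $F$ into foliation position does not give essentiality; one must apply Corollary~\ref{cor:essential}, and this is exactly where hypothesis (ii) is used: desumming essential spheres could in principle change the genus or the class $\alpha$, and the absence of non-separating $2$-spheres is what rules this out. You place (ii) inside the b-arc argument (``forbids evasive configurations''), but in the paper it does its work \emph{before} that argument, as a prerequisite for the FDTC estimate. Once the foliation is essential, the actual bound is clean: with all elliptics on $C$ by (iii-b), Theorem~\ref{lemma:FDTC} gives $c(\phi,K,C)\leq h_-/e_- = 1$ directly from $h_- = e_-$, with no orbit enumeration needed.
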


If we drop the assumption (i) or (iii-c), as shown in Examples \ref{ex1} and \ref{ex2}, $K$ may not be $\alpha$-strongly quasipositive.
However, we note that this does not mean failure of Conjecture \ref{conjecture:SQP}$'$ since some positive stabilizations of $K$ has a good chance to be $\alpha$-strongly quasipositive.

Our second main result Theorem~\ref{theorem:main2} (and Corollary~\ref{cor:main2})  shows that Conjecture \ref{conjecture:minimumBEsurface} holds for the disk open book $(D^{2},id)$ under an assumption of large FDTC.

\begin{theorem}[proved in Section~\ref{sec:5}]
\label{theorem:main2}
Let $\mT$ be a transverse link in $(S^3, \xi_{std})$. 
Consider the disk open book $(D^{2},id)$ that supports $(S^3, \xi_{std})$.  
If $\mT$ admits a closed braid representative $K$ such that
$$ c(id,K,\partial D^{2}) > \frac{\delta(\mT)}{2}+1 $$
then $\mT$ $($in fact, $K$ itself or $K$ with one positive stabilization$)$ bounds a minimum genus Bennequin surface with respect to $(D^{2},id)$.

Moreover, if $\delta(\mT) =0$ and $c(id,K,\partial D^{2}) >1$ then $K$ is a strongly quasipositive braid.
\end{theorem}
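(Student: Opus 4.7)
The plan is to separate Theorem~\ref{theorem:main2} into its two assertions and reduce each to an earlier result. The final ``moreover'' clause is essentially a specialization of Theorem~\ref{theorem:main1}, while the main statement requires upgrading the almost-Bennequin surface produced by Proposition~\ref{prop:PreBennequin} to a genuine Bennequin surface, using a single positive stabilization to absorb the residual defects.

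For the moreover clause I verify the hypotheses of Theorem~\ref{theorem:main1} for the disk open book $(D^{2}, id)$ of $(S^3, \xi_{std})$. Planarity (i) is immediate since the page is $D^2$; condition (ii) holds because $M=S^3$ has no non-separating $2$-sphere; (iii-b) is automatic since the disk open book has only one binding component $\partial D^2$; (iii-c) is our hypothesis $c(id,K,\partial D^2)>1$; and (iii-a) is satisfied by any minimum genus Seifert surface bounded by the closed braid $K$, which exists because $K$ represents the topological type of $\mT$. Theorem~\ref{theorem:main1} then concludes that $K$ is strongly quasipositive.

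For the main assertion, I start from a minimum genus Seifert surface $F$ of $K$ and feed it into Proposition~\ref{prop:PreBennequin}, whose FDTC hypothesis is comfortably met since $c(id,K,\partial D^2) > \delta(\mT)/2 + 1 \geq 1$. This produces a minimum genus Seifert surface of $K$ in pre-Bennequin position with respect to $(D^2,id)$; that is, a disk-band decomposition with finitely many local failures of the honest Bennequin condition. I would then eliminate each failure by a local modification (band slide, planar isotopy, or in the worst case a single positive braid stabilization of $K$), arguing that the genus and the number of negatively twisted bands are preserved under each such move. Once a Bennequin surface $F'$ of genus $g(\mT)$ is obtained, Observation~\ref{observation:BE} guarantees that it has exactly $\delta(\mT)$ negatively twisted bands, so it is the desired minimum genus Bennequin surface.

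The main obstacle will be the quantitative bookkeeping in this last step. Each defect of the pre-Bennequin surface consumes a portion of the FDTC ``budget'' (or else forces a positive stabilization), and the exact numerical bound $c > \delta(\mT)/2 + 1$ is what keeps the total cost at most one positive stabilization, uniformly in $\delta(\mT)$. I expect this to require a careful induction on either the number of negative bands of the pre-Bennequin surface or on a complexity measure of the open book foliation induced on $F$, with the $\delta(\mT)/2$ term accounting for the defects associated to the negative bands and the additional $+1$ absorbing the single allowable stabilization.
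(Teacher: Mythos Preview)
Your treatment of the ``moreover'' clause via Theorem~\ref{theorem:main1} is correct and is a legitimate shortcut; the paper instead argues directly from Proposition~\ref{prop:PreBennequin2}(3), but the two routes coincide once one unwinds the proof of Theorem~\ref{theorem:main1} in the disk case.

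The main assertion, however, has a genuine gap. You invoke Proposition~\ref{prop:PreBennequin}, but its hypothesis is $c(\phi,K,C) > \delta(\mT,[F]) + 2$, which is strictly stronger than the assumed $c(id,K,\partial D^2) > \delta(\mT)/2 + 1$; the claim that the FDTC hypothesis is ``comfortably met since $c > \delta(\mT)/2 + 1 \geq 1$'' is simply false. The correct tool is Proposition~\ref{prop:PreBennequin2}(1) with $k=2$, which under the given bound yields only $e_- \leq 1$, not $e_- = 0$. This is exactly why the theorem allows one positive stabilization: the case $e_- = 1$ can genuinely occur and must be handled.

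The paper's argument for that case is not the vague ``local modifications'' you sketch. First, a disk-specific fact (from Birman--Menasco and Birman--Finkelstein) lets one arrange the essential open book foliation to have no c-circles, so the region decomposition consists only of aa-, ab-, and bb-tiles; with $e_-=1$ there are in fact no bb-tiles and all ab-tiles meet at the unique negative elliptic point $v$. Second, Theorem~\ref{lemma:FDTC} applied at $v$ gives $c(id,K,\partial D^2) \leq N$ where $N$ is the valence of $v$ in $\widehat{G_{--}}$, so $N \geq 2$ and in particular at least one of the ab-tiles at $v$ is \emph{negative}. A single positive stabilization along that negative ab-tile (Lemma~\ref{lemma:ab-remove}) removes $v$, after which only aa-tiles remain. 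Your proposal misses both the c-circle elimination step and the precise mechanism---the guaranteed existence of a negative ab-tile---that makes one stabilization suffice; the inductive ``bookkeeping'' you anticipate is not needed and would not by itself produce this conclusion.
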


\begin{corollary}
\label{cor:main2}
Let $\mT$ be a transverse link in $(S^3, \xi_{std})$. 
Assume that $\mT$ is represented by a closed braid $K$ with $c(id,K, \partial D^{2}) >1$ and realizing the braid index $b(\mT)$. Then the Bennequin inequality for $\mT$ is sharp if and only if $\mT$ is represented by a strongly quasipositive braid of braid index $b(\mT)$ with respect to $(D^2, id)$.
(Namely, the stronger form of Conjecture \ref{conjecture:SQP} holds.)
\end{corollary}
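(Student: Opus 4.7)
The plan is to derive the corollary as a direct consequence of Theorem~\ref{theorem:main2} combined with Observation~\ref{observation:BE}. The corollary is essentially the ``sharp vs.\ strongly quasipositive'' packaging of the ``Moreover'' clause of Theorem~\ref{theorem:main2}, together with the (already known) easy direction that strongly quasipositive braids have sharp Bennequin inequality.

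For the forward direction, assume that the Bennequin inequality is sharp on $\mT$, i.e.\ $\delta(\mT)=0$. By hypothesis $\mT$ has a closed braid representative $K$ realizing $b(\mT)$ and with $c(id,K,\partial D^{2})>1$. Since $\delta(\mT)=0$, the numerical condition $c(id,K,\partial D^{2})>\tfrac{\delta(\mT)}{2}+1=1$ of Theorem~\ref{theorem:main2} is exactly the present hypothesis, so the ``Moreover'' clause of Theorem~\ref{theorem:main2} applies directly and tells us that \emph{this same $K$} is strongly quasipositive. Because $K$ was chosen to realize $b(\mT)$, we obtain a strongly quasipositive representative of $\mT$ of braid index $b(\mT)$, which is what is needed.

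For the reverse direction, suppose $\mT$ is represented by a strongly quasipositive braid (of any braid index, in particular $b(\mT)$). By Observation~\ref{observation:BE}, a strongly quasipositive braid word has a Bennequin surface with no negatively twisted bands, which forces $\delta(\mT)=0$; equivalently the Bennequin inequality is sharp on $\mT$. This direction requires neither the FDTC hypothesis nor the assumption that $K$ realizes $b(\mT)$.

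I do not anticipate any serious obstacle here: the only real content is the ``Moreover'' part of Theorem~\ref{theorem:main2}, and the role of this corollary is simply to record the conclusion in the language of Conjecture~\ref{conjecture:SQP} (the ``stronger form''). The one point to be careful about is that the strongly quasipositive representative produced by Theorem~\ref{theorem:main2} is the given braid $K$ itself (not some stabilization of it), so no braid index is lost in the forward direction; since we started with $K$ of braid index $b(\mT)$, we end with a strongly quasipositive representative of the same braid index $b(\mT)$, giving exactly the stronger form of Conjecture~\ref{conjecture:SQP}.
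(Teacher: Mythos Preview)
Your proposal is correct and matches the paper's approach: the paper states the corollary immediately after Theorem~\ref{theorem:main2} without giving a separate proof, treating it as a direct consequence of the ``Moreover'' clause of that theorem (for the forward direction) together with Observation~\ref{observation:BE} (for the trivial reverse direction). Your care in noting that Theorem~\ref{theorem:main2} concludes that $K$ itself is strongly quasipositive, so no braid index is lost, is exactly the point that makes this the \emph{stronger} form of Conjecture~\ref{conjecture:SQP}.
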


In Example \ref{example:main} we present examples of braids satisfying conditions in Theorem~\ref{theorem:main2} and Corollary~\ref{cor:main2}. In particular, our example contains many non-fibered knots which shows independency of our results from Hedden's \cite{he}.

Although it looks restrictive, the large FDTC assumption is satisfied by {\em almost all} braids: Indeed, given a random $n$-braid $\beta$ and a number $C$, the probability that $|c(id, \widehat{\beta},\partial D^{2})|\leq C$ is zero (see \cite{ma,it} for the precise meaning of  ``random'').

\section{The FDTC for closed braids in open books}
\label{section:FDTC}

In this section we review closed braids in open books and the FDTC for closed braids.

Let $S$ be an oriented compact surface with non-empty boundary, and $P=\{p_1,\ldots,p_n\}$ be a (possibly empty) finite set of points in the interior of $S$.
Let $MCG(S,P)$ (denoted by $MCG(S)$ if $P$ is empty) be the mapping class group of the punctured surface $S \setminus P$; that is, the group of isotopy classes of orientation-preserving homeomorphisms on $S$, fixing $\partial S$ point-wise and fixing $P$ set-wise.

With respect to a connected boundary component $C$ of $S$, the {\em fractional Dehn twist coefficient} ({\em FDTC}) of $\phi \in MCG(S, P)$, defined in \cite{hkm}, is a rational number $c(\phi,C)$ and measures to how much the mapping class $\phi$ twists the surface near the boundary $C$.

Let $(M,\xi)$ be a closed oriented contact 3-manifold supported by an open book decomposition $(S,\phi)$. 
(See \cite{Et} for the meaning of ``supported''.)
Let $B\subset M$ be the binding of the open book and $\pi: M\setminus B \rightarrow S^{1}= [0,1]\slash (0\sim 1)$ be the associated fibration. 
For $t \in S^{1}$ we denote the closure of the fiber $\pi^{-1}(\{t\})$ by $S_t$ and call it a \emph{page}. 
The topological type of $S_t$ is $S$ and the orientation os $S_t$ induces the orientation of the binding $B$. 
Since $M\setminus B$ is diffeomorphic to $S \times [0,1] / (x, 1)\sim(\phi(x), 0)$ we may denote $S \times\{t\}$ by the same notation $S_t$. 
Let $p: M\setminus B \to S$ be the projection map such that $p|_{S_t}: S_t \to S$ gives a diffeomorphism. 
 
A \emph{closed braid} $K$ with respect to $(S,\phi)$ is an oriented link in $M \setminus B$ which is positively transverse to each page. Two closed braids are called \emph{braid isotopic} if they are isotopic through closed braids. The number of intersection points of  $K$ and the page $S_t$ is denoted by $n(K)$ and called the \emph{braid index} of $K$ with respect to $(S, \phi)$.

Let $B_{n}(S)$ be the $n$-stranded surface braid group for $S$ (see \cite[p.244]{FM} for the definition).
Cutting $M \setminus B$ along the page $S_0$ we get a cylinder ${\rm int}(S) \times (0,1)$ and the closed braid $K$ gives rise to a surface braid $\beta_{K} \in B_{n}(S)$ with $n=n(K)$ strands.

The converse direction; namely, obtaining a closed braid from a surface braid $\beta \in B_n(S)$, requires more care. Recall the generalized Birman exact sequence \cite[Theorem 9.1]{FM} 
\begin{equation}\label{BirmanExSq}
1 \longrightarrow B_{n}(S) \stackrel{i}{\longrightarrow} MCG(S,P)\stackrel{f}{\longrightarrow}  MCG(S) \longrightarrow 1
\end{equation}
where $i$ is the push map and $f$ is the forgetful map. 
Since $f$ is not injective we have various ways to construct a closed braid $K$ from a given braid $\beta \in B_{n}(S)$. 

We recall the definition in \cite{ik2} of the FDTC $c(\phi,K,C)$ of $K$ as follows.

Suppose that the mapping class $\phi \in MCG(S)$ is represented by a homeomorphism $f \in {\rm Homeo}^+(S, \partial S)$.  
For a connected boundary component $C$ of $S$, let us choose a collar neighborhood $\nu(C) \subset S$ of $C$. 
We may assume that $f$ fixes $\nu(C)$ point-wise. 
We say that a closed braid $K$ is \emph{based on $C$} if $P= p(K \cap S_{0}) \subset \nu(C)$.
We may isotop $K$ through closed braids so that $K$ is based on $C$.  
Since $f|_{\nu(C)} = id$, the puncture set $P$ is pointwise fixed by $f$; thus, we may view $f$ as an element of ${\rm Homeo}^+(S, P, \partial S)$. In order to distinguish $f\in {\rm Homeo}^+(S, \partial S)$ and $f\in {\rm Homeo}^+(S, P, \partial S)$ we denote the latter by $j(f)$. 
The map $j$ induces a homomorphism
$$j_{*}:MCG(S) \rightarrow MCG(S,P)$$
which satisfies $j_*(\phi) = [j(f)]$.

\begin{definition}\label{def:distinguished-monodromy}
Let $K$ be a closed braid with respect to $(S,\phi)$ and based on $C$. The \emph{distinguished monodromy} of the closed braid $K$ with respect to $C$ is the mapping class 
\[ \phi_K = i(\beta_{K}) \circ j_{*}(\phi) \in MCG(S,P).\]
Here $i$ denotes the push map in the generalized Birman exact sequence. 
The FDTC of a closed braid $K$ with respect to $C$ is defined by
\[ c(\phi,K,C) := c(\phi_K,C). \]
\end{definition}

The FDTC $c(\phi, K, C)$ is well-defined; namely, if braids $K_1$ and $K_2$ are braid isotopic and $p(K_i \cap S_0) \subset \nu(C)$ for both $i=1, 2$ then $c(\phi, K_1, C)=c(\phi, K_2, C)$. In fact, a stronger statement can be found in \cite[Proposition 2.4]{ik-QRV}. 

\begin{remark} 
\label{remark:dualGarside}
Due to the dual Garside structure of the braid group $B_n$  coming from the band generators $\sigma_{i,j}$, \cite[Theorem 3.10]{bkl} states that each $\beta \in B_{n}$ can be represented by the unique left canonical normal form  
$$N(\beta) = \delta^{N}x_1 \cdots x_k,$$
where 
$$
\delta = \sigma_{n-1,n}\sigma_{n-2,n-1}\cdots \sigma_{1,2} 
$$
and $x_1,\ldots,x_k$ are certain strongly quasipositive braids called the \emph{dual simple elements}. 
As a homeomorphism of a disk with $n$ evenly distributed punctures along the boundary, $\delta$ rotates the disk by $\frac{2 \pi}{n}$. 
The integer $N$ in the normal form $N(\beta)= \delta^{N}x_1 \cdots x_k$ is called the \emph{infimum} of $\beta$ \cite[p.337]{bkl} and denoted by $\inf(\beta)$. The \emph{infimum}  of an $n$-stranded closed braid $K$ with respect to the disk open book $(D^2, id)$ is defined by 
$$
\inf(K) = \max\{\inf(\beta)\: | \: \beta \in B_n  \mbox{ is a braid whose closure is braid isotopic to } K\}.
$$ 
We observe that $K$ is strongly quasipositive if and only if $\inf(K)\geq 0$.

Although both $\inf(K)$ and $c(id,K,\partial D^{2})$ count the number of twists near the boundary $\partial D^{2}$ in certain ways, in general, there is no direct connection between them. 
For example, for $n\geq 3$ let 
$$\beta=\sigma_{1,2}\sigma_{2,3} \cdots \sigma_{n-1,n}\sigma_{1,n} \in B_{n}$$
(read from left to right) and $K_m$ be the braid closure of $\beta^{m}$ where $m\in\mathbb N$. The following discussion shows that $\inf(K_m)=0$ and $c(id,K_m,\partial D^{2})=m$. 

Let $x_{i}:=\sigma_{[i],[i+1]}$ for every $i\in \mathbb N$, where $1\leq [i]\leq n$ denotes the unique integer that satisfies $[i] \equiv i \pmod n$ and $\sigma_{n,1}:=\sigma_{1,n}$. 
The normal form of $\beta^{m}$ is
\begin{align*}
N(\beta^{m}) & = x_1 x_2 \cdots x_{nm}\\
& = \left((\sigma_{1,2})(\sigma_{2,3})\cdots (\sigma_{n-1,n})(\sigma_{1,n}) \right)^m.
\end{align*}
Therefore, $\inf(\beta^{m})=0$.
Furthermore, $\beta^{m}$ is rigid; that is, conjugation by the 1st factor of the normal form $x_{1}=\sigma_{1,2}$ 
\[ x_1^{-1}N(\beta^{m})x_1=x_2x_3\cdots x_{nm}x_1 \] 
produces a normal form of the braid $\sigma_{1,2}^{-1}\beta^{m}\sigma_{1,2}$. This implies that $\beta^{m}$ attains the maximum infimum among its conjugacy class \cite[Lemma 3.13]{bgg}; hence, $\inf(K_m)=0$.

On the other hand, looking at the image of some properly embedded arc $\gamma$ under the braid $\beta^{m}$ we obtain $T_{\partial D}^{m-1} (\gamma) \geq \beta^m(\gamma) \geq T_{\partial D}^{m} (\gamma)$ for all $m\in \mathbb N$. Thus by \cite[Theorem 4.14]{ik2} and \cite[Proposition 4.10]{ik2} we have 
$1-\frac{1}{m} \leq c(id,K_1,\partial D^{2}) \leq 1$ for all $m\in \mathbb N$. 
This gives that $c(id,K_1,\partial D^{2}) =1$, and it follows that  $c(id,K_m,\partial D^{2})=m$.  
\end{remark}

\section{Summary of results in open book foliations}\label{section:OBFsummary}

In this section, we review properties of open book foliations that are  needed to prove our main theorems.
For details, see \cite{ik1-1,ik2,ik3}.

Let $(S,\phi)$ be an open book decomposition supporting a contact 3-manifold $(M,\xi)$. 
Throughout this section, the open book decomposition $(S, \phi)$ is fixed. 
Let $K$ be a closed braid with respect to $(S, \phi)$ and $F$ be a Seifert surface of $K$. 
With an isotopy fixing $K=\partial F$, \cite[Theorem 2.5]{ik1-1} shows that $F$ can admit a singular foliation 
\[ \F(F):=\left\{ F \cap S_t \ | \ t \in [0, 1] \right\} \] 
induced by the intersection with the pages of the open book
and satisfying the following conditions. 
\begin{description}
\item[($\mathcal F$ i)] 
The binding $B$ pierces $F$ transversely in finitely many points. 
At each $p \in B \cap F$ there exists a disc neighborhood $N_{p} \subset F$ of $p$ on which the foliation $\F(N_p)$ is radial with the node $p$, see Figure~\ref{fig:sign}-(i). We call $p$ an {\em elliptic} point. 
\item[($\mathcal F$ ii)] 
The leaves of $\F(F)$ are transverse to $K=\partial F$. 
\item[($\mathcal F$ iii)] 
All but finitely many pages $S_{t}$ intersect $F$ transversely.
Each exceptional page is tangent to $F$ at a single point that lies  in the interiors of both $F$ and $S_{t}$.
In particular, $\F(F)$ has no saddle-saddle connections.
\item[($\mathcal F$ iv)] 
All the tangent points of $F$ and fibers are of saddle type, see Figure~\ref{fig:sign}-(ii). 
We call them {\em hyperbolic} points.
\end{description}
Such a foliation $\F(F)$ is called an {\em open book foliation} on $F$. 
\begin{figure}[htbp]
\begin{center}
\includegraphics*[bb= 163 551 442 715,width=90mm]{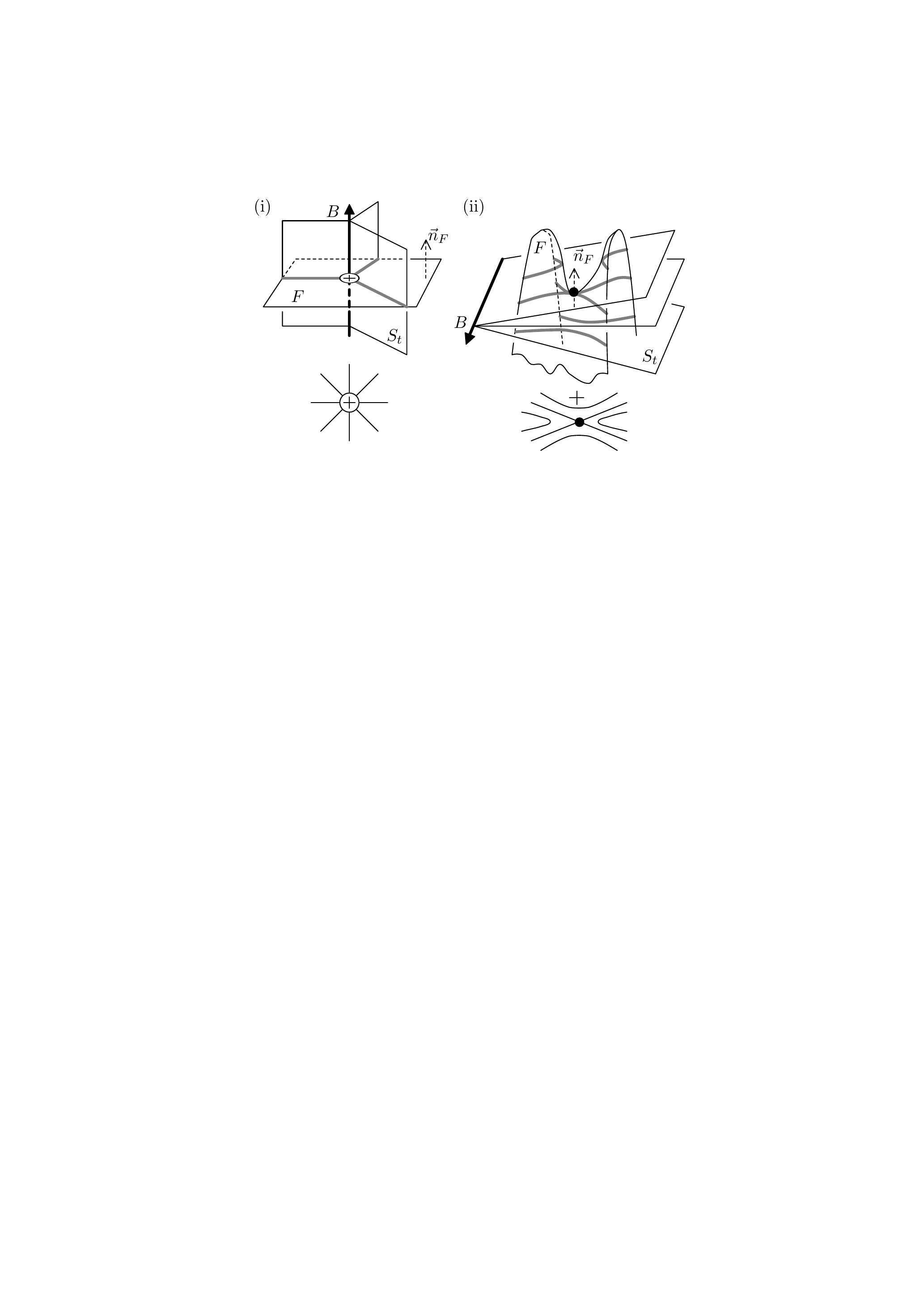}
\caption{ 
(i) A positive elliptic point and (ii) a positive hyperbolic point. 
The dashed arrow $\vec{n}_F \in T_pM$ depicts a positive normal vector to the oriented surface $F$ at a point $p \in F$.}
\label{fig:sign}
\end{center}
\end{figure}

An elliptic point $p$ is {\em positive} (resp. {\em negative}) if the binding $B$ is positively (resp. negatively) transverse to $F$ at $p$.
A hyperbolic point $q \in F \cap S_t$ is {\em positive} (resp. {\em negative}) if the orientation of the tangent plane $T_q(F)$ agrees (resp. disagrees) with the orientation of $T_q(S_t)$.

A leaf of $\F(F)$, a connected component of $F \cap S_t$, is called \emph{regular} if it does not contain a hyperbolic point, and called {\it singular} otherwise.
The regular leaves are classified into the following three types.
\begin{enumerate}
\item[a-arc]: An arc one of whose endpoints lies on $B$ and the other lies on $K$.
\item[b-arc]: An arc whose endpoints both lie on $B$.
\item[c-circle]: A simple closed curve.
\end{enumerate}

The leaves of $\F(F)$ are equipped with orientations as follows (cf. \cite[Definition 2.12]{ik1-1} and \cite[p.80]{OzSt}).  
Take a non-singular point $p$ on a leaf $l$ in a page $S_{t}$.
Let $\vec{n}_F \in T_p S_t \subset T_pM$ be a positive normal vector to the tangent space $T_p F$ and let $v \in T_pl$ be a vector such that $(\vec{n}_F, v)$ gives an oriented bases for the tangent space $T_pS_t$. 
The vector $v$ defines the orientation of the leaf $l$.  
With this orientation of leaves, every  positive (resp. negative) elliptic point becomes a source (resp. sink), and 
the leaves are pointing out of the surface $F$ along the boundary $\partial F$.

According to the types of nearby regular leaves, hyperbolic points are classified into six types: Type $aa$, $ab$, $ac$, $bb$, $bc$ and $cc$.
Each hyperbolic point has a canonical neighborhood as depicted in Figure ~\ref{fig:regions}, which we call a {\em region}. We denote by $\sgn(R)$ the sign of the hyperbolic point contained in the region $R$. 
If $\F(F)$ contains at least one hyperbolic point, then we can decompose $F$ as the union of regions whose interiors are disjoint. We call such a decomposition a \emph{region decomposition}. 

\begin{figure}[htbp]
\begin{center}
\includegraphics*[bb=151 579 446 716,width=90mm]{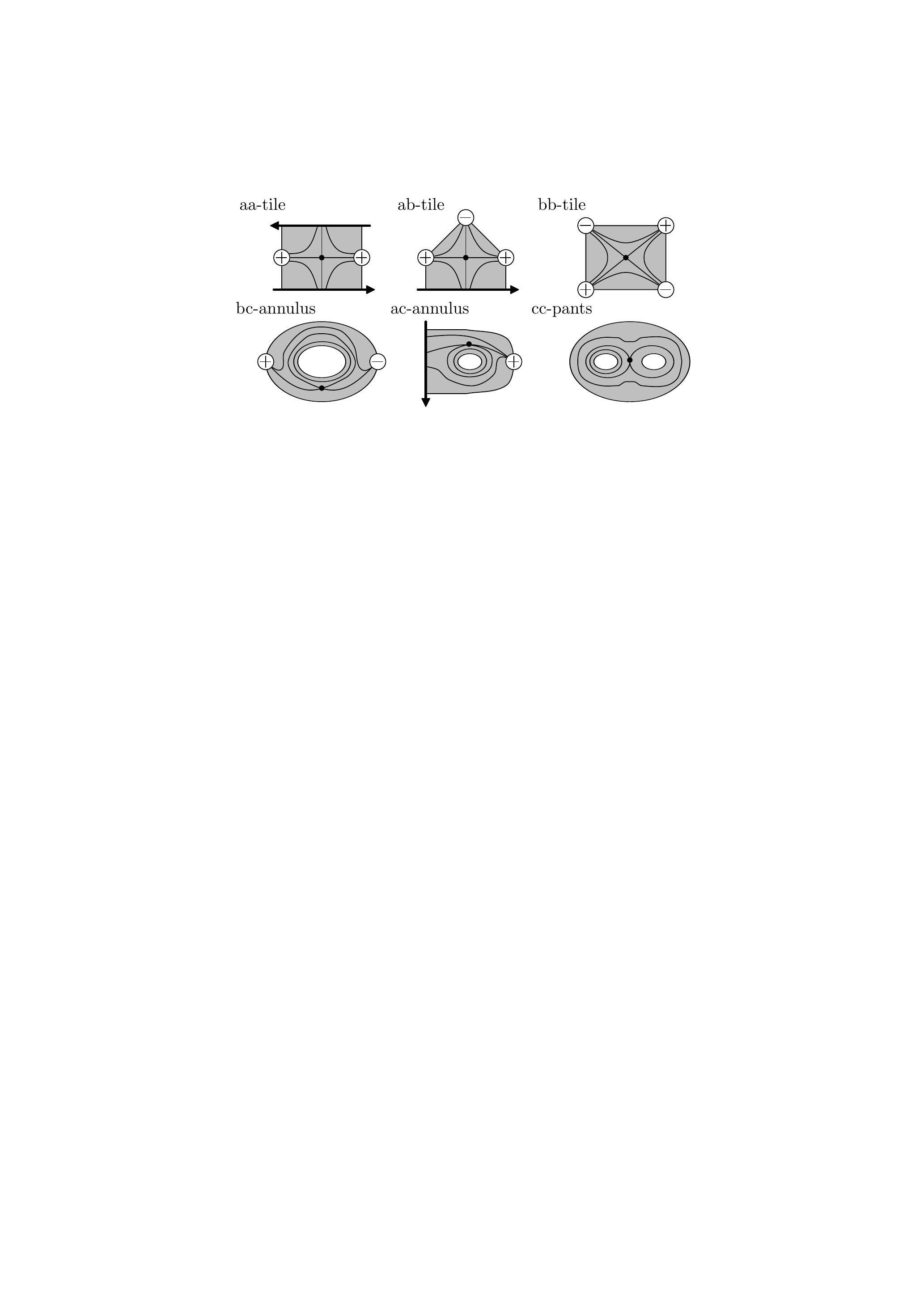}
\caption{Six types of regions.}
\label{fig:regions}
\end{center}
\end{figure}

One can read the Euler characteristic and the self-linking number from the open book foliation.

\begin{lemma}\cite[Proposition 2.11, Proposition 3.2]{ik1-1}
\label{lemma:sl-from-ob}
Let $F$ be a Seifert surface of a transverse link $\mT$ admitting  an open book foliation $\F(F)$. Let $e_{\pm}$ $($resp. $h_{\pm})$ be the number of positive and negative elliptic $($resp. hyperbolic$)$ points of $\F(F)$.
Then the self-linking number has
\[ sl(\mT,[F])= -(e_{+} - e_{-}) + (h_{+}-h_{-}).\]
For the Euler characteristics we have
\[ \chi(\mT, [F]) \geq \chi(F) = (e_{+} + e_{-}) - (h_{+}+h_{-}).\]
Therefore, $\delta(\mT,[F]) \leq h_{-}-e_{-}$. 
In particular, if $g(F)=g(\mT,[F])$ then 
\[ \delta(\mT,[F]) = h_{-}-e_{-}. \]
\end{lemma}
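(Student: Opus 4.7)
The plan is to derive the Euler characteristic formula by applying Poincaré--Hopf to $\F(F)$ viewed as a singular foliation on $F$, derive the self-linking formula by a relative Euler class calculation using that $(S,\phi)$ supports $\xi$, and then combine these identities to obtain the defect bound.

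For the Euler characteristic, orient the regular leaves of $\F(F)$ by the convention given in the excerpt. The open-book-foliation axioms imply that, after smoothing the line field to a vector field tangent to leaves, the only zeros are the elliptic and hyperbolic points: a positive (resp.\ negative) elliptic point is a source (resp.\ sink) of Poincaré--Hopf index $+1$, and a hyperbolic point is a saddle of index $-1$. The leaves point outward along $\partial F = \mT$, so the Poincaré--Hopf theorem yields $\chi(F) = (e_+ + e_-) - (h_+ + h_-)$. The inequality $\chi(\mT,[F]) \geq \chi(F)$ is then immediate from the definition of $\chi(\mT,[F])$ as the maximum Euler characteristic over $[F]$-Seifert surfaces of $\mT$.

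For the self-linking number I would use that $sl(\mT,[F])$ equals the obstruction to extending a non-zero positive section of $\xi|_\mT$ to a non-vanishing section of $\xi|_F$, counted with sign. Since $(S,\phi)$ supports $\xi$, one can isotope $\xi$ so that on each page $S_t$ the contact planes are $C^0$-close to $TS_t$ away from the binding and near the binding follow a standard rotational model. The characteristic foliation of the isotoped $\xi$ on $F$ is then $C^0$-close to $\F(F)$, with singularities at the same elliptic and hyperbolic points. A local computation shows that each positive elliptic point contributes $-1$ and each negative elliptic point contributes $+1$ to the relative Euler number (the sign flip compared to the Euler-characteristic count is forced by the opposite rotation direction of $\xi$ relative to $TF$ in the collar of $B$), while positive (resp.\ negative) hyperbolic points contribute $+1$ (resp.\ $-1$). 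Summing gives $sl(\mT,[F]) = -(e_+ - e_-) + (h_+ - h_-)$. As a sanity check, for the disk open book and a Bennequin surface of a braid word $w$ one has $e_+ = n$, $e_- = 0$, and $h_+ - h_- = \exp(w)$, recovering the classical Bennequin formula $sl = -n + \exp(w)$.

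With the two identities in hand the defect bound is a substitution: using $\chi(\mT,[F])\geq \chi(F)$,
\begin{align*}
\delta(\mT,[F])
&= \tfrac{1}{2}\bigl(-\chi(\mT,[F]) - sl(\mT,[F])\bigr) \\
&\leq \tfrac{1}{2}\bigl(-\chi(F) - sl(\mT,[F])\bigr) \\
&= \tfrac{1}{2}\bigl(-(e_+ + e_-) + (h_+ + h_-) + (e_+ - e_-) - (h_+ - h_-)\bigr) \\
&= h_- - e_-.
\end{align*}
When $g(F) = g(\mT,[F])$ the surfaces $F$ and a genus-minimizing $[F]$-surface have the same number of boundary components $|\mT|$, so $\chi(F) = \chi(\mT,[F])$ and the inequality becomes equality. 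The main technical obstacle is the local sign analysis in the self-linking step: one must verify that the characteristic foliation of the (perturbed) contact structure and the open book foliation $\F(F)$ agree at each elliptic point up to precisely the sign convention above. This boils down to a careful normal-form analysis of how the contact planes rotate near $B$ relative to $TF$, with consistent orientations of $\xi$, of the pages, and of $F$ in a collar of the binding.
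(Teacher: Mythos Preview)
The paper does not give its own proof of this lemma; it simply cites \cite[Proposition~2.11, Proposition~3.2]{ik1-1} and records the formulas. Your outline is essentially the argument carried out in that reference: the Euler characteristic formula is Poincar\'e--Hopf applied to the line field of $\F(F)$ (elliptic points have index $+1$, hyperbolic points index $-1$, leaves exit along $\partial F$), and the self-linking formula comes from identifying $\F(F)$ with the characteristic foliation of a contact structure isotopic to $\xi$ and then reading off the signed count of singularities as the relative Euler number of $\xi$ on $F$. The final algebraic deduction of the defect bound is exactly as you wrote. So your proposal is correct and aligned with the cited source; the only part that genuinely requires care, as you flag, is the local sign verification near the binding in the self-linking step, which is handled in \cite{ik1-1} by an explicit model computation.
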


We say that a b-arc $b$ in a page $S_{t}$ is \emph{essential} if $b$ is not boundary-parallel as an arc of the punctured page $S_{t} \setminus (S_{t} \cap K)$. 
We say that an open book foliation $\F(F)$ is \emph{essential} if all the $b$-arcs are essential (cf. \cite[Definition 3.1]{ik2}).
The next theorem shows that an incompressible surface admits an essential open book foliation, after desumming essential spheres that are 2-spheres that do not bound 3-balls:

\begin{theorem}\cite[Theorem 3.2]{ik2}
\label{thm:Thm3.2}
Suppose that $F$ is an incompressible Seifert surface of a closed braid $K$.
Then there exist a Seifert surface $F'$ of $K$ admitting an essential open book foliation and essential spheres $\mathcal{S}_{1},\ldots,\mathcal{S}_{k}$ such that $F$ is isotopic to $F'\# \mathcal{S}_1 \# \cdots \# \mathcal{S}_k$ by an isotopy that fixes $K=\partial F$.
Moreover, if $F$ does not intersect a binding component $C$ then neither does $F'$.
\end{theorem}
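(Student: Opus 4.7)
\medskip
\noindent
\textbf{Proof plan.} The plan is to start with any open book foliation on $F$ (which exists by \cite[Theorem 2.5]{ik1-1}) and then remove inessential b-arcs one at a time, at the cost of either an ambient isotopy fixing $K$ or splitting off an essential 2-sphere summand. Concretely, define a complexity
\[ c(F) := \bigl(\#\{\text{hyperbolic points of }\F(F)\},\ \#\{\text{inessential b-arcs as singular leaves}\}\bigr) \]
ordered lexicographically, and consider a Seifert surface of $K$ that is obtained from $F$ by a finite sequence of the moves described below and that minimizes $c$. The goal is to show that at this minimum no inessential b-arc survives.

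\medskip
\noindent
\textbf{Locating a canonical inessential b-arc.} First I would pick an inessential b-arc $b\subset S_t\cap F$ that is innermost: $b$ together with a subarc $\gamma$ of the binding cobounds a half-disk $\Delta_t\subset S_t$ with $\Delta_t\cap K=\emptyset$ and with no other inessential b-arc in $\Delta_t$. Following $b$ through the $t$-parameter direction produces a continuous family $\{b_s\}_{s\in(t_{-},t_{+})}$ of regular b-arc leaves; at the endpoints $t_{\pm}$ this family terminates at a hyperbolic point, necessarily of type $bb$, $ab$, or $bc$. Accompanying disks $\Delta_s\subset S_s$ sweep out a solid region in $M\setminus B$ that is bounded on one side by a subsurface $R\subset F$ and on the other side by the swept family of disks together with a collar of $\gamma$.

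\medskip
\noindent
\textbf{Extracting a sphere or simplifying by isotopy.} Using the incompressibility of $F$ and a case analysis on the types of the hyperbolic points at $t_{\pm}$, I would cap $R$ by the swept disks to build an embedded 2-sphere $\mathcal S\subset M$, whose intersection with $F$ is $R$ and whose complementary disk lies in the union of pages and a binding-collar. Two cases:
\begin{itemize}
\item If $\mathcal S$ bounds a 3-ball $B^3\subset M$, isotope $F$ across $B^3$ by an isotopy fixing $K$. A direct local check shows this isotopy cancels the hyperbolic points at $t_\pm$ (together with the associated elliptic points on $\gamma$) and strictly decreases $c(F)$, contradicting minimality.
\item If $\mathcal S$ does not bound a 3-ball, $\mathcal S$ is essential; moreover $F$ can be rewritten as $F''\# \mathcal S$ for some Seifert surface $F''$ with strictly smaller complexity than $F$. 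Replace $F$ by $F''$, record $\mathcal S$ in the list, and repeat.
\end{itemize}
Either branch decreases $c(F)$ strictly, so after finitely many iterations we reach a surface $F'$ with no inessential b-arcs, i.e.\ with an essential open book foliation, together with essential spheres $\mathcal S_1,\ldots,\mathcal S_k$ such that $F\cong F'\#\mathcal S_1\#\cdots\#\mathcal S_k$ rel $K$. The moreover clause is automatic: the half-disk $\Delta_t$, the sphere $\mathcal S$, and the isotopy across $B^3$ are all supported in a neighborhood of a binding component that $F$ meets at an elliptic point, so binding components disjoint from $F$ remain disjoint from $F'$ and from every $\mathcal S_i$.

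\medskip
\noindent
\textbf{Main obstacle.} The delicate step is the second one, specifically verifying that the swept family really does close up into an embedded sphere and that the isotopy across $B^3$ reduces complexity globally rather than merely shuffling inessential b-arcs from one page to another. This requires a local model for each of the hyperbolic types $bb$, $ab$, $bc$ at the endpoints $t_\pm$, together with an argument (using that $F$ is incompressible and that $b$ was chosen innermost) that the capping disks in the pages meet neither $K$ nor other parts of $F$ in unexpected ways. Once these local pictures are in hand, the cancellation of hyperbolic points and the splitting off of a connected summand become routine.
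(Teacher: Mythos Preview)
This theorem is quoted from \cite{ik2} and is not proved in the present paper, so there is no in-paper argument to compare against. I can only comment on your proposal on its own terms.

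Your global architecture (complexity function, innermost inessential b-arc, dichotomy between ``sphere bounds a ball'' and ``split off an essential sphere'') is the right one and matches the argument in \cite{ik2}. The gap is exactly where you flagged it, but it is more serious than you indicate: the ``sweep the half-disks $\Delta_s$ through the $t$-direction'' construction does not naturally produce an embedded 2-sphere meeting $F$ only in $R$. The endpoints of each $b_s$ are the two elliptic points $v_{+},v_{-}$, which are \emph{fixed} points on the binding; so the swept region $R\subset F$ is a bigon pinched at $v_{\pm}$, and near $v_{\pm}$ there are many other leaves of $\F(F)$ running into those elliptic points. At the terminal times $t_{\pm}$ the half-disk $\Delta_{t_{\pm}}$ degenerates in a way that depends on the hyperbolic type, and in the $bb$ and $bc$ cases the putative ``cap'' is not even a disk in the page. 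So verifying embeddedness of $\mathcal S$ and that $\mathcal S\cap F=R$ is not a routine local check --- it is essentially the whole content of the theorem, and the time-sweep viewpoint makes it harder rather than easier.

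The cleaner route, and the one taken in \cite{ik2}, stays in a \emph{single} page. Given an innermost inessential b-arc $b\subset S_t$ cobounding a half-disk $\Delta\subset S_t$ with an arc $\gamma\subset\partial S_t$, push $\gamma$ slightly off the binding onto $F$ (near the elliptic points $F$ is a stack of meridian disks, so this makes sense) to obtain a closed curve $b\cup\gamma'\subset F$ bounding a disk $D$ that is a small perturbation of $\Delta$ with $D\cap F=\partial D$. Now incompressibility of $F$ is used \emph{directly}: $\partial D$ bounds a disk $D'\subset F$, and $\mathcal S=D\cup D'$ is your sphere. If $\mathcal S$ bounds a ball you isotope $D'$ to $D$; this visibly removes $v_{+},v_{-}$ and the adjacent hyperbolic points, strictly dropping the complexity. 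If not, $\mathcal S$ is essential and $F\cong F''\#\mathcal S$ with $F''$ simpler. This avoids the time-sweep entirely, makes the role of incompressibility transparent, and gives the ``moreover'' clause for free since $D$, $D'$, and the isotopy are supported near the binding component carrying $v_{\pm}$.
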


Here is a corollary of Theorem~\ref{thm:Thm3.2} which we use later for the proofs of our main results. 

\begin{corollary}
\label{cor:essential}
Assume that $M$ contains no non-separating 2-spheres.
Let $K$ be a closed braid representative of a null-homologous transverse link $\mT$ and $F$ be an incompressible Seifert surface of $K$. 
Then there is an incompressible Seifert surface $F'$ of $K$ with the following properties:
\begin{itemize}
\item $F'$ admits an essential open book foliation.
\item $[F]=[F'] \in H_{2}(M,\mT;\Z)$. 
\item $g(F')=g(F)$.
\item If $F$ does not intersect a binding component $C$ then neither does $F'$. 
\end{itemize}
\end{corollary}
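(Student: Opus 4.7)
The plan is to apply Theorem~\ref{thm:Thm3.2} directly to the incompressible surface $F$. The theorem then supplies a Seifert surface $F'$ of $K$ admitting an essential open book foliation together with essential 2-spheres $\mathcal{S}_{1},\ldots,\mathcal{S}_{k}$ such that $F$ is isotopic rel $K$ to $F'\#\mathcal{S}_{1}\#\cdots\#\mathcal{S}_{k}$, and moreover $F'$ avoids $C$ whenever $F$ does. The first and fourth bulleted properties are therefore immediate, and the remaining work is to verify the genus equality, the homology-class equality, and the incompressibility of $F'$.

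For the genus, I would note that connect summing a surface with a 2-sphere does not alter its abstract homeomorphism type, so $g(F)=g(F'\#\mathcal{S}_{1}\#\cdots\#\mathcal{S}_{k})=g(F')$. For the homology class, I would invoke the hypothesis that $M$ contains no non-separating 2-sphere. A closed embedded orientable surface in an oriented 3-manifold is separating if and only if it is null-homologous, so each $\mathcal{S}_{i}$ satisfies $[\mathcal{S}_{i}]=0\in H_{2}(M;\Z)$; its image in $H_{2}(M,\mT;\Z)$ under the natural map from the long exact sequence of $(M,\mT)$ is therefore zero as well. Since connect sum is additive in relative homology, $[F]=[F']+\sum_{i}[\mathcal{S}_{i}]=[F']\in H_{2}(M,\mT;\Z)$.

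Incompressibility of $F'$ is not a direct output of Theorem~\ref{thm:Thm3.2} and requires a separate argument. Suppose for contradiction that $D$ is a compressing disk for $F'$ in $M$. By shrinking the small disks on $F'$ used to form the connect sums with the $\mathcal{S}_{i}$ so that they are disjoint from the essential curve $\partial D\subset F'$, one sees that under the natural homeomorphism $F'\cong F'\#\mathcal{S}_{1}\#\cdots\#\mathcal{S}_{k}\cong F$ the curve $\partial D$ remains essential on $F$, while $D$ persists as an embedded disk in $M$ meeting $F$ only along $\partial D$. This contradicts incompressibility of $F$. The only mildly delicate point is arranging the connect-sum tubes to be in general position with respect to a hypothetical compressing disk, which is a standard perturbation argument; I do not expect any substantial obstacle beyond this.
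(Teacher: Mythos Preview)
The paper states Corollary~\ref{cor:essential} without proof, treating it as immediate from Theorem~\ref{thm:Thm3.2}; your argument is exactly the natural unpacking of that corollary, and your treatment of the first, third, and fourth bullets and of the homology class is correct (the key point being that the hypothesis forces each $\mathcal{S}_i$ to separate, hence to be null-homologous).

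Your incompressibility argument, however, is looser than you suggest. You cannot simply ``perturb'' the connect-sum tubes off a hypothetical compressing disk $\Delta$: the connect sum $F'\#\mathcal{S}_1\#\cdots\#\mathcal{S}_k$ is a \emph{specific} surface isotopic to $F$, and while you may shrink the attaching disks on $F'$, you are not free to reroute the tubes through $M$ (different tubes need not give isotopic surfaces when $M$ is not simply connected). After shrinking, $\Delta$ may still meet the tubes or the sphere pieces in essential circles, and transversality alone does not remove them; one needs an innermost-disk argument on $\Delta\cap D$ (in the disk-exchange picture $F=(F'\setminus E)\cup D$) to surger $\Delta$ into a genuine compressing disk for $F$. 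Alternatively, a cleaner route avoids all of this: since $E\subset F'$ and $D\subset F$ are disks, the inclusions $F'\setminus E\hookrightarrow F'$ and $F\setminus D\hookrightarrow F$ are $\pi_1$-isomorphisms, and $F'\setminus E=F\setminus D$ as subsets of $M\setminus K$, so $\pi_1(F)\to\pi_1(M\setminus K)$ is injective iff $\pi_1(F')\to\pi_1(M\setminus K)$ is. By the Loop Theorem this is equivalent to incompressibility, and the conclusion follows.
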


The following theorem gives a connection between essential open book foliations and the FDTC of braids.

\begin{theorem}\cite[Theorem 5.5, Theorem 5.12]{ik2}
\label{lemma:FDTC}
Let $F$ be an incompressible Seifert surface of a closed braid $K$ equipped with an essential open book foliation. 
 Let $v_1,\ldots, v_n$ be negative elliptic points which lie on the same binding component $C$. Let $N$ be the number of negative hyperbolic points that are connected to at least one of $v_1,\ldots,v_n$ by a singular leaf. Then we have
\[ c(\phi,K,C) \leq \frac{N}{n}. \]
\end{theorem}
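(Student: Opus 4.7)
The plan is to estimate $c(\phi,K,C)=c(\phi_K,C)$ directly from the open book foliation $\F(F)$ by analyzing the dynamics of b-arcs emanating from the negative elliptic points $v_1,\ldots,v_n$. First I would isotope $K$ through closed braids so that it is based on $C$ in the sense of Definition~\ref{def:distinguished-monodromy}; then the negative elliptic points on $C$ are included in the puncture set $P \subset \nu(C)$ that is relevant to $\phi_K$. For each $v_i$ I fix a regular page $S_{t_0}$ near $v_i$ and consider the b-arcs in $S_{t_0} \cap F$ with $v_i$ as an endpoint; by the essentiality hypothesis these are essential arcs in the punctured page, and as $t$ increases they evolve by planar isotopy away from hyperbolic times and by a saddle surgery precisely at each hyperbolic time whose hyperbolic point is joined to $v_i$ by a singular leaf of $\F(F)$.

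The FDTC $c(\phi_K, C)$ can be characterized via arc translation: it is the asymptotic translation rate, around $C$, of a properly embedded arc with an endpoint on $C$ under iteration of $\phi_K$. Equivalently, it records the net number of times that the family $\{F \cap S_t\}_{t\in [0,1]}$ of b-arcs winds around $C$ over one circuit. Combining this with the saddle-surgery description of how b-arcs change, the computation of $c(\phi_K, C)$ reduces to assigning a local winding contribution to each hyperbolic point incident to some $v_i$. The key geometric input, obtained by a case analysis of the six region types in Figure~\ref{fig:regions} together with the induced orientations of leaves at negative elliptic points (which are sinks), is that at a positive hyperbolic point incident to $v_i$ the associated saddle surgery does not increase the winding of the incident b-arc around $C$, whereas at a negative hyperbolic point incident to $v_i$ the winding can increase by at most one.

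Summing over $i=1,\ldots,n$, the net winding acquired by the $n$ families of b-arcs after one circuit is therefore at most $N$, since each of the $N$ negative hyperbolic points is adjacent to at least one $v_i$ and contributes at most $1$ to the winding of any arc it touches. Dividing by $n$ and applying the arc-translation characterization of the FDTC yields $c(\phi,K,C) \leq N/n$.

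The main obstacle is the case analysis showing that positive hyperbolic points contribute nothing to the positive winding around $C$, and that a single negative hyperbolic point adds at most one full twist. This forces a careful examination of each region type (aa, ab, ac, bb, bc, cc) in Figure~\ref{fig:regions}, keeping track of leaf orientations and of the position of the elliptic endpoint on $C$. Essentiality of $\F(F)$ is crucial here: without it, boundary-parallel b-arcs could create or destroy winding without a corresponding hyperbolic signature, breaking the book-keeping that makes the estimate tight.
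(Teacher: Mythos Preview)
This theorem is not proved in the present paper; it is quoted from \cite[Theorems~5.5 and~5.12]{ik2} and used as a black box in Section~\ref{sec:5}. There is therefore no proof here to compare your attempt against.

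Regarding the sketch on its own terms: the overall strategy---follow the b-arcs incident to the $v_i$ as $t$ runs through $[0,1]$, and attribute changes in their position around $C$ to the hyperbolic points by sign---is indeed the mechanism behind the result in \cite{ik2}. The gap is in your final step. The FDTC $c(\phi_K,C)$ is an \emph{asymptotic} translation number, not the winding accrued by a chosen arc over a single circuit, so ``total winding $\leq N$, now divide by $n$'' does not by itself bound $c(\phi_K,C)$. In \cite{ik2} the factor $1/n$ enters for a different reason: the $n$ negative elliptic points on $C$ cut a neighborhood of $C$ into $n$ sectors, and each relevant negative hyperbolic singularity can advance a b-arc past at most one sector boundary, i.e.\ contribute at most $1/n$ of a full twist about $C$. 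Turning this into an inequality for the FDTC requires comparing the moving b-arcs to a fixed reference arc system and invoking the characterization of $c(\phi_K,C)$ via the right-veering order on arcs, together with essentiality to ensure the b-arcs genuinely record isotopy classes in the punctured page. Your proposal names these ingredients but does not supply the argument linking the one-circuit combinatorics to the asymptotic invariant, and the ``average over $n$ arcs'' reading of the division is not the correct one.
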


\section{Generalized Bennequin surfaces}
\label{section:Bennequin_surface}

\subsection{Definition of $\alpha$-Bennequin surfaces} 

In this subsection, we generalize the notion of Bennequin surfaces in $S^3$ with respect to the disk open book $(D^2, id)$ to Bennequin surfaces in a general manifold $M$ with respect to a general open book $(S, \phi)$. 

Let $(S,\phi)$ be an open book. 
We take an annular neighborhood $\nu=\nu(\partial S) \subset S$ of $\partial S$ and fix a homeomorphism 
$$
\nu \approx \underset{|\partial S|}{\sqcup} S^{1} \times [0,1)
$$
so that $\partial S \subset \nu$ is identified with $\underset{|\partial S|}{\sqcup}S^1 \times \{0\}$. Take a set of points $P=\{p_1,\ldots,p_n\}$ so that $P \subset  \underset{|\partial S|}{\sqcup} S^{1} \times \{1/2\}$. Let $\frac{1}{2}\nu:=\underset{|\partial S|}{\sqcup} S^{1}\times [0,\frac{1}{2}) \subset \nu$.
See Figure~\ref{fig:bandtwist}. 
\begin{figure}[htbp]
\begin{center}
\includegraphics*[bb=  198 583 402 714,width=70mm]{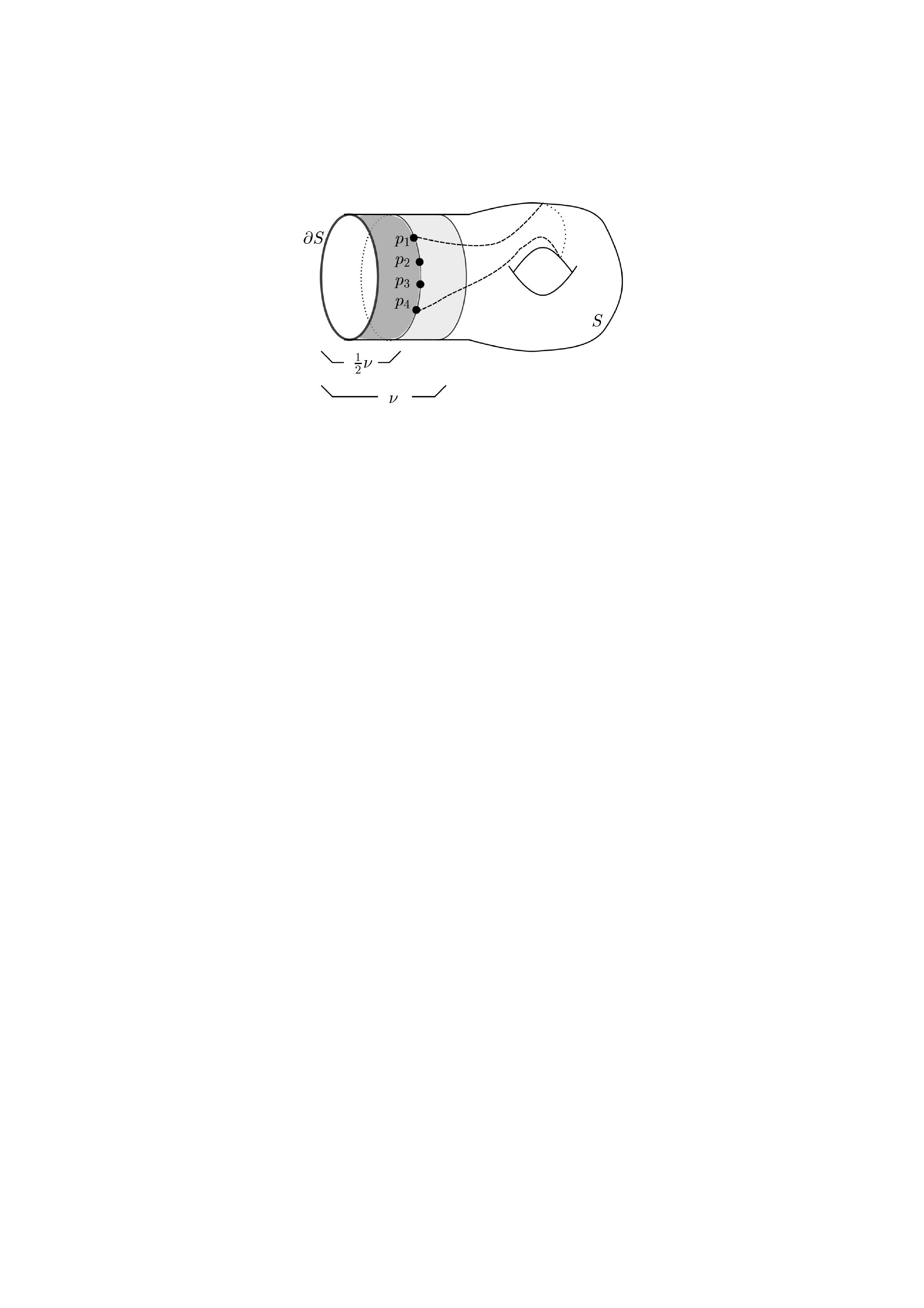}
\caption{Regions $\nu, \frac{1}{2}\nu$ and a properly embedded arc in $S \setminus \frac{1}{2}\nu$ connecting $p_1$ and $p_4$. }
\label{fig:bandtwist}
\end{center}
\end{figure} 

We view $B_{n}(S)$ as a subgroup of $MCG(S,P)$ through the push map $i$ in the generalized Birman exact sequence (\ref{BirmanExSq}). We say that a braid $w \in B_{n}(S)$ is a {\em positive} (resp. {\em negative}) \emph{band-twist} if $w\in MCG(S,P)$ is a positive (resp. negative) half twist about a properly embedded arc  in $S \setminus \frac{1}{2}\nu$ connecting two distinct points in $P$. See Figure \ref{fig:bandtwist} and Figure~\ref{fig:half-twist}. 
\begin{figure}[htbp]
\begin{center}
\includegraphics*[width=100mm]{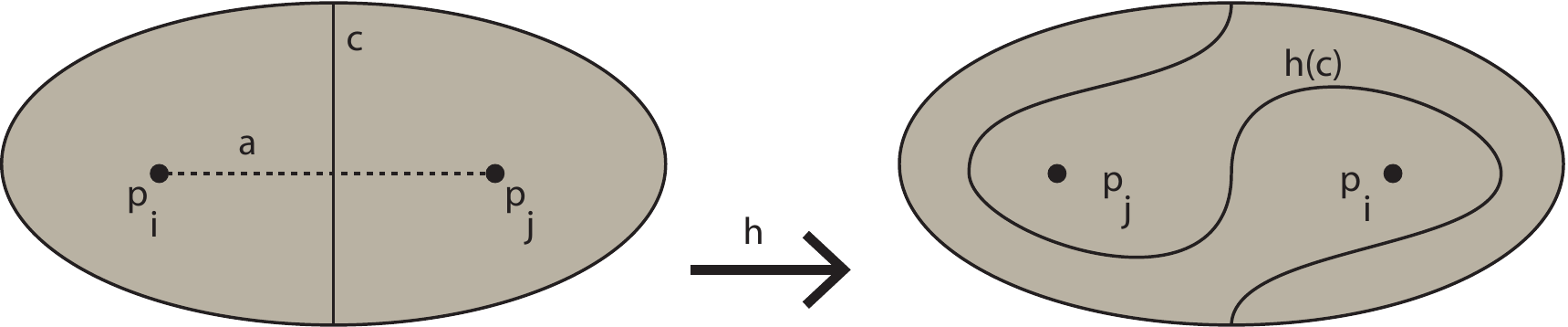}
\caption{A positive half-twist $h$ about an arc $a$ that joins $p_i$ and $p_j$. }
\label{fig:half-twist}
\end{center}
\end{figure}

\begin{definition}
A \emph{band-twist factorization} of a braid $\beta \in B_{n}(S)$ is a factorization of $\beta$ into a word $w_1\cdots w_m$, where each $w_i$ is a band-twist. 
\end{definition}

In the case of $S=D^{2}$, a band-twist factorization is nothing but a factorization using the band generators $\sigma_{i,j}$.

When $g(S)>0$, some braids in $B_{n}(S)$ may not admit  band-twist factorizations: For example, a non-trivial 1-braid in $B_{1}(S)\cong \pi_1(S)$ does not admit a band-twist factorizations.

\begin{definition}[Construction of $F_w$]
\label{definition:BEsurface_alg}
For a closed braid representative $K$ of a transverse link $\mT$ we may isotop $K$ through closed braids so that $p(K \cap S_0)=P$. Let $\beta_{K} \in B_{n}(S)$ be the $n$-braid obtained from $K$ by cutting $M$ along $S_0$. 
Suppose that $\beta_K$ admits a band-twist factorization  $$w=w_1\cdots w_m.$$ 

Take $n$ disjoint meridional disks of the binding $B$ bounded by $$P\times [0,1]/(x,1)\sim(x,0).$$ 
Take a sequence $0< t_1 < \cdots < t_m < 1$. 
For each positive (resp. negative) band-twist $w_i \in MCG(S, P)$, let $\gamma_i$ be a properly embedded arc in $S \setminus \frac{1}{2}\nu$ joining distinct points $x_i$ and $y_i\in P$
such that a positive (resp. negative) half-twist about $\gamma_i$ represents $w_i$. 
For each $i=1,\cdots,m$, we attach a positively twisted band whose core is $\gamma_i \times \{t_i\} \subset S_{t_i}$ to the two meridional disks corresponding to the puncture points $x_i$ and $y_i$, see Figure~\ref{fig:twist-band2}. 

The resulting surface is a Seifert surface of the closed braid $K$ and is denoted by $F_w$. 
\end{definition}  
\begin{figure}[htbp]
\begin{center}
\includegraphics*[width=100mm]{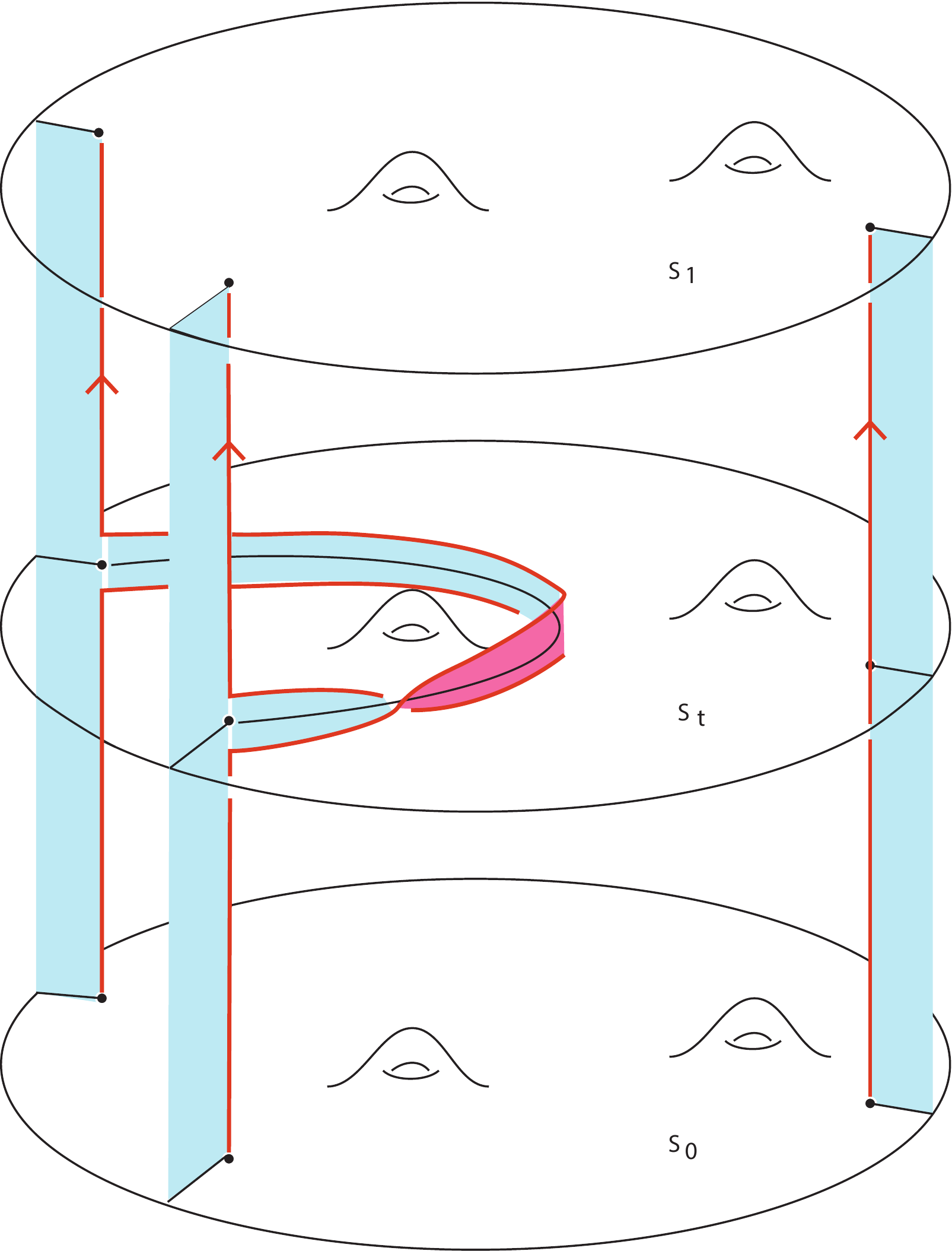}
\caption{Construction of $F_w$. 
The front side is colored blue and the  back side is pink. 
The three vertical rectangle strips become meridional disks in $M_{(S, \phi)}$. A positively twisted band is attached to two meridional disks. The core of the band is $\gamma \times \{t\} \subset S_t$. Orange curves are in braid position.   }
\label{fig:twist-band2}
\end{center}
\end{figure}

Let $(S,\phi)$ be an open book decomposition supporting a contact 3-manifold $(M,\xi)$.
Let $\mT$ be a null-homologous transverse link in $(M,\xi)$ and $\alpha \in H_{2}(M,\mT;\Z)$ be a Seifert surface class. 
Let $K$ be a closed braid representative of $\mT$ with respect to $(S, \phi)$. 

\begin{definition}
\label{definition:BEsurface_geom}
{$ $ } \\
{\bf (1):} An $\alpha$-Seifert surface $F$ of $K$ is called an \emph{$\alpha$-Bennequin surface} of $K$ with respect to $(S,\phi)$ if $F$ admits an open book foliation whose region decomposition consists of only aa-tiles.

\noindent
{\bf (2):} We say that the closed braid $K$ is \emph{$\alpha$-strongly quasipositive} with respect to $(S,\phi)$ if it is the boundary of an $\alpha$-Bennequin surface without negative hyperbolic points. 
(In this case, we also say that $\mT$ is \emph{$\alpha$-strongly quasipositive}.) 
\end{definition}

\begin{remark} 
\begin{itemize}
\item
As noted in Section~\ref{sec:1} the definition of $\alpha$-strongly quasipositive with respect to an open book (Definition \ref{definition:BEsurface_geom} (2)) has been introduced in \cite[Definition 3]{B}. Hayden independently has the same definition \cite[Definition 3.3]{Hayden}.
\item
It is straightforward from Definition \ref{definition:BEsurface_geom} (2) that the Bennequin-Eliashberg inequality is sharp 
on every $\alpha$-strongly quasipositive transverse link, which is also stated in \cite[Corollary 6.3]{Hayden}. 
\item
When $(S, \phi)=(D^2, id)$, the above $\alpha$-Bennequin surface is the same as the Bennequin surface defined by Birman and Menasco in \cite[p.71]{bm2}, and the above $\alpha$-strongly quasipositive is the same as strongly quasipositive defined by Rudolph \cite{Ru83, R}. \end{itemize}
\end{remark}

\begin{proposition}\label{prop:4.5}
The Seifert surface $F_w$ constructed from a band-twist factorization $w$ of a braid $\beta_K$ is an $\alpha$-Bennequin surface, where $\alpha = [F_w]\in H_2(M, K;\Z)$.
\end{proposition}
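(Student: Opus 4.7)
The plan is to produce the open book foliation on $F_w$ explicitly from its construction and then read off the region decomposition. Since $\alpha=[F_w]$ is set by definition, the only content to prove is that $F_w$ admits an open book foliation consisting entirely of $aa$-regions.

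First I will identify the elliptic and hyperbolic points. Each of the $n$ meridional disks meets the binding $B$ transversely at the single puncture point $p_j\in P$; these are the candidate elliptic points, and each is positive because the meridional disk is oriented compatibly with $B$. For each band attached at the page $S_{t_i}$ with core $\gamma_i\times\{t_i\}$, I will arrange (this is the one technical step) the band inside a neighborhood $\gamma_i\times(t_i-\epsilon,t_i+\epsilon)$ so that the half-twist is realized by a family of arcs transverse to pages except at a single saddle tangency located at $(\text{midpoint of }\gamma_i,t_i)$. The sign of this hyperbolic point will be positive exactly when the band is positive, using the sign convention of Figure \ref{fig:sign}(ii). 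By taking the $t_i$ distinct, the $m$ hyperbolic points lie on distinct pages, so ($\mathcal{F}$iii) holds.

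Next I will verify the remaining axioms. For ($\mathcal{F}$i), the radial neighborhood of the elliptic point $p_j$ comes from the radial structure of the meridional disk near $B$. For ($\mathcal{F}$ii), $K$ is in braid position, so it is positively transverse to each page, and the meridional disks plus bands have been positioned so their page-intersections terminate on $K$ transversely. For ($\mathcal{F}$iv), the tangency above is of saddle type because a half-twisted band projected to the $t$-direction has a standard Morse saddle at the midpoint of its core (flat on the core and twisted to opposite sides). The regular leaves are then easy to describe: away from the $t_i$'s, the slice $F_w\cap S_t$ is a disjoint union of arcs, each one being a radius of some meridional disk (possibly extended by a thin strip of a band), running from the puncture point on $B$ to the boundary $K$. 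In particular every regular leaf is an $a$-arc; no $b$-arc or $c$-circle appears.

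Finally I will analyze a region neighborhood of each hyperbolic point $h_i$. The four singular leaves emanating from $h_i$ are the two arcs of $F_w\cap S_{t_i}$ containing $h_i$ (two half-lines on either side of $h_i$ along $\gamma_i\times\{t_i\}$), each of which runs from $h_i$ to one of the two elliptic points corresponding to the endpoints of $\gamma_i$. Each such singular leaf is an $a$-arc, so by the classification in Figure \ref{fig:regions} the region containing $h_i$ is an $aa$-tile: bounded by two $a$-arcs and two arcs of $K$, with two positive elliptic corners on $B$ and the single saddle $h_i$ in its interior. Thus the region decomposition of $\mathcal{F}(F_w)$ consists of exactly $m$ regions, all of type $aa$, which shows $F_w$ is an $\alpha$-Bennequin surface.

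The only delicate point, and the step I would write out most carefully, is the band positioning in the first paragraph: one has to exhibit an explicit parametrization of the twisted band with a single non-degenerate saddle tangency with $S_{t_i}$ and transverse intersections with all other pages. Once that standard local model is in place, Steps II and III reduce to reading off the leaves from the construction.
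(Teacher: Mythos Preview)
Your proof is correct and follows essentially the same approach as the paper: the paper's proof is a two-sentence sketch noting that each meridional disk contributes one positive elliptic point with $a$-arcs and each band contributes one hyperbolic point whose stable separatrix is $\gamma_i\times\{t_i\}$, while you flesh out the same picture by verifying the axioms $(\mathcal{F}\text{i})$--$(\mathcal{F}\text{iv})$ and explicitly reading off the $aa$-tiles. One small terminological slip: the elliptic point where a meridional disk meets the binding $B$ is a point \emph{on} $B$, not the puncture point $p_j\in P$ (which lies in the interior of the page); the disk is merely bounded by the circle $\{p_j\}\times[0,1]/\!\sim$.
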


\begin{proof}
The open book foliation of each meridional disk of $F_w$ contains one positive elliptic point at the intersection with the binding $B$ and a-arcs emanating from the elliptic point. 
The open book foliation of each $\pm$-twisted band contains one singular leaf with $\pm$ hyperbolic point such that its stable separatrix is the arc $\gamma_i \times \{t_i\}$ using the notation in Definition~\ref{definition:BEsurface_alg}. 
\end{proof}

\begin{proposition}
The boundary of every $\alpha$-Bennequin surface with respect to $(S,\phi)$ is a closed braid with respect to $(S, \phi)$ which admits a band-twist factorization. 
\end{proposition}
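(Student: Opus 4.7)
The plan is to reverse the construction from Definition~\ref{definition:BEsurface_alg} by reading a band-twist factorization of $\beta_K$ off the open book foliation of $F$. First, because the leaves of any open book foliation are transverse to $\partial F$, the link $K = \partial F$ is positively transverse to every page $S_t$ and is therefore a closed braid with respect to $(S,\phi)$ in the sense of Section~\ref{section:FDTC}.

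For the factorization, after a small isotopy of $F$ fixing $K$ I would arrange the $m$ hyperbolic points of $\F(F)$ to lie on pairwise distinct pages $S_{t_1}, \ldots, S_{t_m}$ with $0 < t_1 < \cdots < t_m < 1$. Let $D_v$ denote the radial disk neighborhood of an elliptic point $v$ provided by the definition of an open book foliation. The complement $F \setminus \bigcup_v \Int(D_v)$, decomposed along the region structure of $\F(F)$, is a disjoint union of $m$ aa-tiles, each containing exactly one hyperbolic point. Because every region has type aa, each such tile is a rectangle meeting the boundaries of precisely two of the disks $D_v$; geometrically this is a twisted band joining two meridional disks at the binding, with twist sign equal to $\sgn(R)$. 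Thus $F$ has the same disk-band pattern as the surface $F_w$ in Definition~\ref{definition:BEsurface_alg}.

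To convert this pattern into an honest band-twist factorization, push the meridional disks slightly into the page $S_0$ so that their boundaries pass through the fixed puncture set $P = \{p_1,\ldots,p_n\} \subset S^1 \times \{1/2\}$. For each $i=1,\ldots,m$, the stable separatrix of the $i$-th saddle inside the page $S_{t_i}$ is an embedded arc that connects the two elliptic corners of the $i$-th aa-tile through the hyperbolic point; under the projection $p\colon M \setminus B \to S$ followed by the above pushing, it becomes a properly embedded arc $\gamma_i$ in $S$ with endpoints in $P$. Let $w_i \in B_n(S)$ denote the positive or negative half-twist about $\gamma_i$, with sign dictated by $\sgn(R_i)$.

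It remains to verify that $\beta_K = w_1 w_2 \cdots w_m$ in $B_n(S)$. This follows by a standard slicing argument: on each open interval $(t_{i-1}, t_i)$ the family $F \cap S_t$ is an isotopy-constant collection of a-arcs whose endpoints on $K$ trace out a trivial sub-braid, while crossing the saddle at $t_i$ exchanges the two a-arcs incident to the hyperbolic point by exactly the half-twist $w_i$ on their endpoints on $K$. Concatenating from $t=0$ to $t=1$ recovers $\beta_K = w_1 w_2 \cdots w_m$, giving the desired band-twist factorization. The one technical point I expect to require care is arranging each $\gamma_i$ to lie in $S \setminus \frac{1}{2}\nu$ as demanded by Definition~\ref{definition:BEsurface_alg}; however, since the endpoints of $\gamma_i$ already lie on $S^1 \times \{1/2\}$ and $\gamma_i$ is one-dimensional inside a surface, a transversality isotopy of $\gamma_i$ rel $\partial \gamma_i$ handles this without genuine obstruction.
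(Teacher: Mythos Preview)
Your argument is correct and follows essentially the same route as the paper's proof: both read off the disk-band structure from the aa-tile decomposition, place the hyperbolic points on distinct pages, and use the projected stable separatrices as the arcs defining the half-twists $w_i$. You are somewhat more explicit than the paper in verifying that $\partial F$ is a closed braid and that the concatenation $w_1\cdots w_m$ really equals $\beta_K$ (the paper simply asserts $F\cong F_w$), and the paper handles the $S\setminus \frac{1}{2}\nu$ issue by taking a subarc of the projected separatrix rather than by a transversality isotopy, but these are cosmetic differences rather than a different approach.
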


\begin{proof}
Since an $\alpha$-Bennequin surface $F$ admits an aa-tile decomposition, $F$ is a union of disks each of which is a regular neighborhood of a positive elliptic point $q_k$ of $\F(F)$ and twisted bands each of which is a rectangular neighborhood of a singular leaf containing one hyperbolic point of $\F(F)$.  The sign of each twisted band is equal to the sign of the corresponding hyperbolic point.

Up to isotopy that preserves the topological type of the open book foliation $\F(F)$ we may assume that: 
\begin{itemize}
\item
Each disk is centered at $q_k$ and its boundary 
is described as $p_k \times [0,1]/(p_k,1) \sim (p_k,0)$ for some point $p_k \in \frac{1}{2}\nu$, and
\item
The stable separatrices $\eta_1,\cdots,\eta_m$ of the singular leaves of $\F(F)$ lie on distinct pages $S_{t_i}$ for some $0<t_1 <\cdots < t_m<1$. 
Then the projection $\gamma_i=p(\eta_i)\subset S$ is a properly embedded arc in $S$ joining points, say $q_{k_i}$ and $q_{k'_i} \in\partial S$. 
\end{itemize}
We may further assume that $p_{k_i}, p_{k'_i} \in \gamma_i$ and denote the subarc of $\gamma_i$ joining $p_{k_i}$ and $ p_{k'_i}$ by $\gamma'_i$. 
Let $w_i$ be a band-twist represented by an $\epsilon_i$ half-twist about the arc $\gamma'_i$ where $\epsilon_i \in \{\pm1\}$ is the sign of the hyperbolic point that $\gamma_i$ contains. 
Let $w = w_1 \cdots w_m \in MCG(S, P)$. 
Then we see that $F$ is homeomorphic to the surface $F_w$.
\end{proof}

\begin{corollary}
If a closed braid $K$ is $\alpha$-strongly quasipositive with respect to $(S, \phi)$ if and only if $\beta_K$ is a product of positive band-twists.
\end{corollary}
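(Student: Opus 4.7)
The plan is to obtain the corollary as a direct consequence of the preceding Proposition~\ref{prop:4.5} together with the immediately preceding proposition that characterises $\alpha$-Bennequin surfaces as those of the form $F_w$. The essential point — already implicit in both constructions — is that the \emph{sign} of each band-twist factor $w_i$ matches the \emph{sign} of the corresponding hyperbolic point in the open book foliation of the associated band in $F_w$.

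For the forward direction, I would start with an $\alpha$-strongly quasipositive closed braid $K$. By Definition~\ref{definition:BEsurface_geom}(2), $K$ bounds an $\alpha$-Bennequin surface $F$ whose open book foliation contains no negative hyperbolic points. Applying the preceding proposition to $F$ produces a band-twist factorization $w = w_1 \cdots w_m$ of $\beta_K$ in which each $w_i$ is an $\epsilon_i$ half-twist, with $\epsilon_i$ equal to the sign of the unique hyperbolic point in the corresponding $aa$-tile. Since every hyperbolic point of $\F(F)$ is positive by hypothesis, every $\epsilon_i = +1$, so $\beta_K$ is a product of positive band-twists.

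Conversely, suppose $\beta_K = w_1 \cdots w_m$ with each $w_i$ a positive band-twist. Proposition~\ref{prop:4.5} shows that the constructed surface $F_w$ is an $\alpha$-Bennequin surface of $K$ for $\alpha = [F_w]$. In the construction of Definition~\ref{definition:BEsurface_alg}, each positive (resp. negative) band-twist factor $w_i$ contributes exactly one positively (resp. negatively) twisted band, which corresponds to a single positive (resp. negative) hyperbolic point in the open book foliation of $F_w$. Since each $w_i$ is positive, $\F(F_w)$ has no negative hyperbolic points, so $F_w$ witnesses that $K$ is $\alpha$-strongly quasipositive.

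There is no serious obstacle; the only point that deserves a sentence of justification is the sign correspondence between a band-twist factor and its associated hyperbolic point, but this is already spelled out in the construction of $F_w$ in Definition~\ref{definition:BEsurface_alg} (positive/negative half-twist about $\gamma_i$ yields a positively/negatively twisted band) and in the second proof of the preceding proposition (the band-twist $w_i$ is defined to be an $\epsilon_i$ half-twist about $\gamma_i'$ with $\epsilon_i$ the sign of the enclosed hyperbolic point). Thus the corollary is essentially a bookkeeping statement packaging these two propositions together.
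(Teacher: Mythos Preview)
Your proof is correct and matches the paper's intended reasoning: the paper states this corollary without proof, precisely because it is the bookkeeping consequence of Proposition~\ref{prop:4.5} and the immediately preceding proposition, with the sign correspondence between band-twists and hyperbolic points carried along. Your explicit identification of that sign correspondence (via Definition~\ref{definition:BEsurface_alg} and the proof of the preceding proposition) is exactly what the paper leaves implicit.
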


\subsection{Minimum genus Bennequin surfaces}\label{subsec:4.3}

In this section, we prove Theorems \ref{theorem:Q3to12} and \ref{theorem:BEsurface_top}.
We begin with a simple observation that if a transverse link is the boundary of a Bennequin surface then it satisfies the Bennequin-Eliashberg inequality.

\begin{lemma}
\label{lemma:deltacondition}
Let $\mT$ be a null-homologous transverse link and $\alpha \in H_{2}(M,\mT;\Z)$ be a Seifert surface class. If $\mT$ is the boundary of a minimal genus $\alpha$-Bennequin surface  
then $\delta(\mT,\alpha) \geq 0$. 
\end{lemma}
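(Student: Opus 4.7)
The plan is to read off $\delta(\mT,\alpha)$ directly from the open book foliation on the given minimum genus $\alpha$-Bennequin surface $F$, using Lemma~\ref{lemma:sl-from-ob}, and then exploit the very restrictive structure of an $\alpha$-Bennequin surface to eliminate negative elliptic points entirely.

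First, since $F$ is minimum genus, i.e.\ $g(F)=g(\mT,\alpha)$, the final clause of Lemma~\ref{lemma:sl-from-ob} gives
\[
\delta(\mT,\alpha) = h_{-}(F) - e_{-}(F),
\]
where $e_{\pm}(F)$ and $h_{\pm}(F)$ count the elliptic and hyperbolic points of $\F(F)$ of each sign. Thus it suffices to show $h_{-}(F) \geq e_{-}(F)$. In fact, I would aim for the stronger statement $e_{-}(F)=0$.

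To get this, I would use the defining feature of an $\alpha$-Bennequin surface: its region decomposition consists entirely of aa-tiles. Consequently every regular leaf of $\F(F)$ is an a-arc, and every elliptic point of $F$ is an endpoint of some a-arc joining the binding $B$ to $\partial F=K$. The key step is an orientation argument using the convention recalled in Section~\ref{section:OBFsummary}: leaves are oriented so that positive (resp.\ negative) elliptic points are sources (resp.\ sinks) and so that leaves point \emph{out} of $F$ along $\partial F$. If an elliptic endpoint of an a-arc were negative, the a-arc would be oriented from its $K$-endpoint into the elliptic sink, forcing the leaf to point inward along $\partial F$ at its $K$-endpoint, which contradicts the outward convention. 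Hence every elliptic point of an $\alpha$-Bennequin surface is positive, i.e.\ $e_{-}(F)=0$, and so $\delta(\mT,\alpha) = h_{-}(F) \geq 0$.

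The main thing to be careful about is the orientation bookkeeping in the third paragraph; once the incompatibility of a negative elliptic point with an a-arc endpoint is established, everything else is a direct substitution into Lemma~\ref{lemma:sl-from-ob}. No appeal to FDTC, essentiality, or minimality beyond the assumption $g(F)=g(\mT,\alpha)$ is needed.
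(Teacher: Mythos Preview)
Your argument is correct and matches the paper's proof: the paper simply notes that any $\alpha$-Bennequin surface has $e_{-}=0$ and then applies Lemma~\ref{lemma:sl-from-ob} to get $\delta(\mT,\alpha)=h_{-}\geq 0$. You have spelled out, via the leaf-orientation convention, \emph{why} an aa-tile can only have positive elliptic points, which the paper treats as implicit in the definition of aa-tiles; otherwise the two proofs are identical.
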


\begin{proof}
Since any $\alpha$-Bennequin surface has $e_{-}=0$, Lemma \ref{lemma:sl-from-ob} gives $\delta(\mT,\alpha)= h_{-} \geq 0$.
\end{proof}

Proposition \ref{prop:newformulation} and Theorem \ref{theorem:Q3to12} easily follow from  Lemma \ref{lemma:sl-from-ob}.

\begin{proof}[Proof of Proposition \ref{prop:newformulation}]
If $(M,\xi)$ is overtwisted, then by the Bennequin-Eliashberg inequality theorem (Theorem~\ref{thmBE}), there is a transverse link $\mT$ and its Seifert surface class $\alpha$ such that $\delta(\mT,\alpha)<0$ (e.g. take a transverse push-off of the boundary of an overtwisted disk). By Lemma \ref{lemma:deltacondition} such a transverse link $\mT$ cannot bound a minimum genus $\alpha$-Bennequin surface.
This proves the contrapositive of the first statement of the proposition. 

To see the second statement of the proposition, we assume that $(M,
\xi)$ is tight. 
Theorem~\ref{thmBE} and the truth of 
Conjecture \ref{conjecture:minimumBEsurface} imply that for any null-homologous transverse link $\mT$ and its Seifert surface class $\alpha$, $\mT$ bounds a minimal genus $\alpha$-Bennequin surface with respect to $(S, \phi)$. 
\end{proof}

\begin{proof}[Proof of Theorem \ref{theorem:Q3to12}]
Assume that $\delta(\mT,\alpha)\geq 0$ for some $\mT$ and $\alpha$. 
The truth of Conjecture~\ref{conjecture:minimumBEsurface} implies that there exists an $\alpha$-Bennequin surface $F$ with $g(F)=g(\mT,\alpha)$.
Let $p$ (resp. $n$) be the number of positively (resp. negatively) twisted bands in $F$. By a property of the geometric definition of an $\alpha$-Bennequin surface, $p$ (resp. $n$) is equal to the number of positive (resp.  negative) hyperbolic points of the open book foliation $\F(F)$. Any $\alpha$-Bennequin surface has $e_{-}=0$. By Lemma \ref{lemma:sl-from-ob} $\delta(\mK,\alpha) = h_-=n.$ Thus Conjectures 1$'$ and 2$'$ hold.  
\end{proof} 

Next we prove Theorem \ref{theorem:BEsurface_top}, which guarantees the existence of minimum genus Bennequin surfaces for every {\em topological} link type.

Let $C$ be a connected component of the binding of the open book $(S, \phi)$.
Let $\mu_{C}$ be a meridian of $C$ whose orientation is induced from the orientation of $C$ by the right hand rule. 
We say that a closed braid $K'$ is a \emph{positive}  (resp. \emph{negative}) {\em stabilization} of a closed braid $K$ about $C$, if $K'$ is the band sum of  $\mu_C$ and $K$ with a positively (resp. negatively) twisted band. See Figure \ref{fig:stabilization} (i).
Here, a positively (resp. negatively) twisted band is an oriented rectangle 
whose foliation induced by the pages of the open book 
has a unique positive (resp. negative) hyperbolic point. 
The boundary edges are oriented by the boundary orientation of the rectangle. The edges $\overrightarrow{bc}$ and $\overrightarrow{da}$ are in braid position; namely, positively transverse to the pages of the open book. On the other hand, the edges $\overrightarrow{ab}$ and $\overrightarrow{cd}$ negatively transverse to the pages. (Therefore, the condition ($\mathcal F$ ii) of open book foliations is not satisfied at the corner points $a, b, c$ and $d$.)  
The edge $\overrightarrow{ab}$ is attached to $\mu_C$ and the edge $\overrightarrow{cd}$ is attached to $K$.   
\begin{figure}[htbp]
\begin{center}
\includegraphics*[
width=130mm]{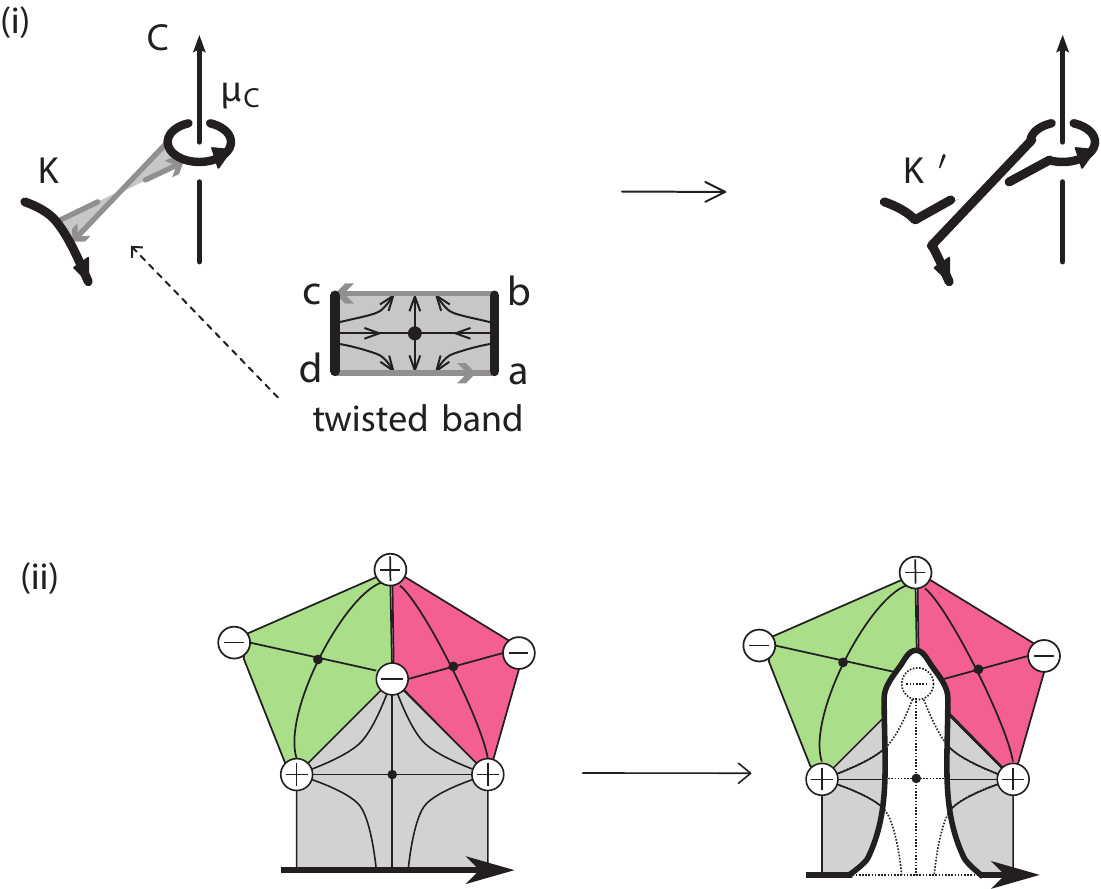}
\caption{(i) Stabilization about the binding component $C$. 
(ii) Stabilization along an ab-tile (gray). The two bb-tiles (green and red) become ab-tiles. }
\label{fig:stabilization}
\end{center}
\end{figure}

Both positive and negative stabilizations preserve the topological link type of the closed braid $K$. 
A positive stabilization preserves the transverse link type of $K$, whereas a negative stabilization does not. Recall the fact that one can remove an ab-tile by a stabilization, see Figure \ref{fig:stabilization} (ii) and \cite[Figure 26]{BM}.

\begin{lemma}
\label{lemma:ab-remove}
Let $K$ be a null-homologous closed braid with respect to $(S, \phi)$ and $F$ be a Seifert surface of $K$ admitting an open book foliation. 
Assume that the region decomposition of $F$ has an ab-tile $R$.
Let $C$ denote the binding component on which the negative elliptic point of $R$ lies. 
If $\sgn(R)=+1$ (resp. $-1$) then a \emph{negative} (resp. {\em positive}) stabilization of $K$ about $C$ can remove the ab-tile $R$. As a result, ab-tiles, bb-tiles and bc-annuli that share the negative elliptic point become aa-tiles, ab-tiles and ac-annuli, respectively.  
\end{lemma}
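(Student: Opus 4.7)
The strategy is to construct the stabilization explicitly in a small neighborhood of $v$ and $R$, and then track how it alters the open book foliation. Let $\epsilon := \sgn(R)$ and let $h$ denote the hyperbolic point of $R$. In the region decomposition of $F$, the ab-tile $R$ has one a-arc joining a positive elliptic point to $\partial F = K$, and one b-arc with an endpoint at $v \in C$. In a disk neighborhood $N(v) \subset F$ of $v$, the foliation $\F(F)$ is a radial ``pie'' whose sectors are exactly the various regions of the region decomposition sharing $v$; each such region is an ab-tile, a bb-tile, or a bc-annulus.

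Next, I perform the stabilization. Let $S_{t_0}$ be the page containing the singular leaf through $h$. Take a short sub-arc $\alpha \subset K$ near where the a-arc of $R$ meets $K$, and attach a twisted band $B$ of sign $-\epsilon$ to $K$ along $\alpha$, with the other edge of $B$ glued to a meridian $\mu_C$ of $C$ placed on a page close to $S_{t_0}$; see Figure~\ref{fig:stabilization}. This produces the stabilized closed braid $K'$ and the modified surface $F' = F \cup D_C \cup B$, where $D_C$ is the meridian disk cobounded by $\mu_C$ and $C$. By the definition of stabilization, $K'$ is a negative (if $\epsilon = +1$) or positive (if $\epsilon = -1$) stabilization of $K$ about $C$. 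The disk $D_C$ contains a single positive elliptic point, and $B$ contributes a single hyperbolic point of sign $-\epsilon$.

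The effect on the foliation is now local. Under the gluings that produce $F'$, each regular leaf of $\F(F)$ that used to terminate at $v$ now terminates on $\mu_C \subset \partial F'$, so it becomes an a-arc; in particular $v$ is no longer an elliptic point of $\F(F')$. Moreover, by the placement of $B$, the singular leaves through $h$ and through the hyperbolic point of $B$ are joined by a common regular leaf and carry opposite signs, so they form a Giroux elimination pair and can be canceled by an isotopy of $F'$ supported in a small neighborhood. After this cancellation, the ab-tile $R$ has been removed. For every other region of the original decomposition whose boundary contained a b-arc terminating at $v$: an ab-tile becomes an aa-tile, a bb-tile becomes an ab-tile, and a bc-annulus becomes an ac-annulus, exactly as asserted.

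The main obstacle is the sign check: one must verify that a band of sign $-\epsilon$, attached at the indicated location, really produces an elimination pair with $h$ rather than reinforcing it. This reduces to a careful local comparison of the orientation of $F$, the induced orientations of the regular leaves, and the local twist of $B$ as depicted in Figure~\ref{fig:stabilization}. Once this sign check is done, the remaining verifications are routine local model manipulations that do not alter the foliation anywhere outside the chosen neighborhood.
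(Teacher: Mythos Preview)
Your construction has a genuine gap at the sentence ``each regular leaf of $\F(F)$ that used to terminate at $v$ now terminates on $\mu_C \subset \partial F'$.'' In the surface $F' = F \cup D_C \cup B$ you build, the original piece $F$ is left untouched: the negative elliptic point $v$ is still an interior singular point of $\F(F')$, and every b-arc that ended at $v$ still ends there. Attaching a disjoint meridian disk and band near $K$ does nothing to the foliation in a neighbourhood of $v$. Thus your $F'$ has two \emph{more} singular points than $F$, not fewer. The cancellation you then invoke is also not valid as stated: a pair of hyperbolic points of opposite sign is not a Giroux elimination pair---that operation cancels an elliptic and a hyperbolic of the \emph{same} sign. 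Even granting some ad hoc cancellation of the two hyperbolics, the negative elliptic $v$ would persist, contradicting the conclusion of the lemma.

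The paper's argument goes in the opposite direction. Rather than attaching material to $F$, one pushes $K=\partial F$ \emph{into} $F$ across the unstable separatrix of $h$ (the prong of the singular leaf running from $h$ to $v$). This isotopy of the boundary sweeps over the ab-tile $R$ and wraps the new boundary $K'$ once meridionally around $C$ at $v$; afterwards both $h$ and $v$ lie outside the new surface and are simply absent from its foliation. Inspecting the local model (Figure~\ref{fig:stabilization}~(ii)) shows that $K'$ is exactly the band-sum of $K$ with a meridian of $C$, with the sign of the stabilization equal to $-\sgn(R)$. No separate elimination step is needed, and the effect on the neighbouring regions is immediate: each b-arc of an adjacent region that used to end at $v$ now ends on $K'$, converting ab-, bb-, bc-regions into aa-, ab-, ac-regions respectively.
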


\begin{proof}
We push $K=\partial F$ across the unstable separatrix of the hyperbolic point in $R$. 
See Figure~\ref{fig:stabilization}~(ii).
Then the resulting closed braid is a stabilization of $K$ about $C$ and the sign of stabilization is positive (resp. negative) if $\sgn(R)=-1$ (resp. $\sgn(R)=$+1). 
\end{proof}

\begin{proof}[Proof of Theorem \ref{theorem:BEsurface_top}]
Take a closed braid representative $K$ of a null-homologous topological link type $\mK$. 
Let $F$ be an $\alpha$-Seifert surface of $K$ with $g(F)=g(\mK,\alpha)$.
By an isotopy fixing $K$ we may put $F$ in a position so that $F$ admits an open book foliation $\F(F)$. 
By \cite[Proposition 2.6]{ik1-1} we may assume that $\F(F)$ contains no c-circles. 

By Lemma \ref{lemma:ab-remove}, after sufficiently many positive and negative stabilizations, we can remove all the ab-tiles without producing new c-circles.
As a consequence, existing bb-tiles may become ab-tiles. Then we remove the new ab-tile as well by further stabilizations.  
After removing all the ab-tiles and bb-tiles, the region decomposition consists of only aa-tiles; thus, we obtain an $\alpha$-Bennequin surface.
\end{proof}

\section{Proofs of the main theorems}\label{sec:5}

The goal of this section is to prove the main results (Theorems~\ref{theorem:main1} and \ref{theorem:main2} and Corollary~\ref{cor:main2}). 

\subsection{Lemmas for the main results}

Let $F$ be a Seifert surface of a closed braid $K$ with respect to $(S,\phi)$.
Assume that $F$ admits an open book foliation $\F(F)$. 
Fix a region decomposition of $\F(F)$. 
To relate the open book foliation and the FDTC, we use the following graph $\widehat{G_{--}}$ which is a slight modification of the graph $G_{--}$ introduced in \cite[Definition 2.17]{ik1-1}.

\begin{definition}\label{def:extendedgraph}
Let $R$ be an ab-tile, a bb-tile or a bc-annulus in the region decomposition of $\F(F)$. 
If $\sgn(R)=-1$ then the graph $G_R$ on $R$ is as illustrated in Figure~\ref{figure:graph}. 
If $\sgn(R)=+1$ then $G_R$ is defined to be empty.
Also, if $R$ is an aa-tile, an ac-annulus or a cc-pants then $G_R$ is defined to be empty. 
The union of graphs $G_R$ over all the regions of the region decomposition and  all the negative elliptic points gives a  (possibly not connected) graph, $\widehat{G_{--}}$, contained in $F$. 
We call the graph $\widehat{G_{--}}$ the \emph{extended graph} of $G_{--}$.

There are two types of vertices in $\widehat{G_{--}}$. 
We say that a vertex of $\widehat{G_{--}}$ is \emph{fake} if it is not a negative elliptic point as depicted with a hollow circle in Figure~\ref{figure:graph}.  
A negative elliptic point is called a {\em non-fake} vertex. 

\end{definition}

\begin{figure}[htbp]
\begin{center}
\includegraphics*[width=80mm]{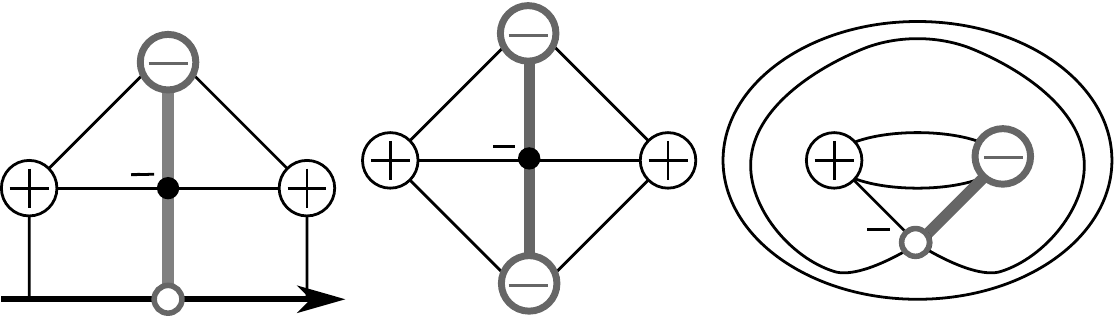}
\caption{The graph $G_R$. 
Edges are thick gray. 
Negative elliptic points are non-fake vertices. 
Hollow circles are fake vertices. 
Black dots are negative hyperbolic points and not vertices. 
An edge may or may not contain a black dot. }
\label{figure:graph}
\end{center}
\end{figure}

In Lemma~\ref{lemma:key} and Propositions~\ref{prop:PreBennequin}, \ref{prop:PreBennequin2} and \ref{prop:noac}, we assume that $K$ is a closed braid with respect to an open book $(S,\phi)$ representing a null-homologous transverse link $\mT$, and $F$ is a Seifert surface of $K$ with   
\begin{enumerate}
\item[(I)]
$\delta(\mT, [F])\geq 0$, 
\item[(II)]
$g(F)=g(K, [F]);$
that is, $F$ is incompressible and $\delta(\mT,[F]) \leq h_{-}-e_{-}$ by Lemma~\ref{lemma:sl-from-ob}, 
\item[(III)] $F$ admits an open book foliation $\F(F)$ (which may be not essential).
\end{enumerate}

\begin{lemma}
\label{lemma:key}
If the open book foliation $\F(F)$ contains negative elliptic points then the extended graph $\widehat{G_{--}}$ contains a non-fake vertex of valence less than or equal to $\delta(\mT,[F])+2$. \end{lemma}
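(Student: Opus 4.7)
The plan is to combine the sharp identity for $\delta(\mT,[F])$ that follows from Lemma~\ref{lemma:sl-from-ob} under assumption (II) with a global double-count of the half-edges of $\widehat{G_{--}}$ incident to non-fake vertices, and then to conclude by averaging.

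First I would apply Lemma~\ref{lemma:sl-from-ob}: since $g(F)=g(\mT,[F])$ by hypothesis (II), we have $\chi(F)=\chi(\mT,[F])$, so the general inequality $\delta(\mT,[F])\le h_{-}-e_{-}$ is promoted to the equality
\[
\delta(\mT,[F]) \;=\; h_{-}-e_{-}.
\]
Thus the target conclusion is equivalent to producing a non-fake vertex of valence at most $h_{-}-e_{-}+2$.

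Next, I would carry out a region-by-region inspection of the local pictures of $G_R$ in Definition~\ref{def:extendedgraph} and Figure~\ref{figure:graph}. Only regions containing a negative elliptic point — that is, negative $ab$-tiles, negative $bb$-tiles and negative $bc$-annuli — contribute edges to $\widehat{G_{--}}$, and each such edge is canonically associated with the unique negative hyperbolic point of its region. A local check shows that such an edge has at most two non-fake endpoints, and exactly two only in the $bb$ case, while in the $ab$ and $bc$ cases at least one endpoint is fake. Summing over regions yields the bound
\[
\sum_{v \text{ non-fake}} d(v) \;\le\; 2\,h_{-}.
\]

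The third step is an averaging argument when $e_{-}\ge 2$: the minimum valence over non-fake vertices is at most the average $2h_{-}/e_{-}$, and the elementary identity
\[
(h_{-}-e_{-}+2) \;-\; \frac{2h_{-}}{e_{-}} \;=\; \frac{(e_{-}-2)(h_{-}-e_{-})}{e_{-}}
\]
combined with $h_{-}\ge e_{-}$ (which is just assumption (I) in disguise) shows that the average is at most $h_{-}-e_{-}+2$. Hence some non-fake vertex attains valence at most $h_{-}-e_{-}+2$, as required.

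Finally, the corner case $e_{-}=1$ requires a direct argument, since the preceding averaging becomes vacuous. With a unique non-fake vertex $v$, every edge of $\widehat{G_{--}}$ is either a loop at $v$ (contributing $2$ to $d(v)$) or a pendant from $v$ to a fake vertex (contributing $1$), and each such edge lies over a distinct negative hyperbolic point; I would combine this combinatorial bound with the local cyclic order of $b$-arcs emanating from $v$ to rule out too many $bb$-loops at a single negative elliptic point, yielding $d(v)\le h_{-}+1=\delta(\mT,[F])+2$.

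The hard part will be the bookkeeping in Steps~2 and~4: one must check directly from the explicit pictures of $G_R$ that $bc$-annuli and degenerate configurations (for instance, several corners of a single region collapsing onto one negative elliptic point) do not produce additional half-edges at non-fake vertices beyond those counted above, and one must rule out pathological configurations of $bb$-tiles around a single negative elliptic in the $e_{-}=1$ case — possibly after a preliminary reduction to a $c$-circle-free open book foliation via \cite[Proposition~2.6]{ik1-1}, for which one should verify that hypotheses (I)--(III) are preserved.
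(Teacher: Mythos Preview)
Your averaging argument in Steps~1--3 is correct and gives a somewhat cleaner packaging than the paper's proof for the case $e_-\ge 2$. The paper instead computes the Euler characteristic of $\widehat{G_{--}}$ and, after assuming every non-fake vertex has valence at least $3$, derives $(j-2)\,e_-\le 2\,\delta(\mT,[F])$ for the minimum such valence $j$; this is equivalent to your bound $j\le 2h_-/e_-\le\delta(\mT,[F])+2$, and both routes ultimately rest on the same two ingredients (the handshake count and $\#\{\text{edges}\}\le h_-$). Your algebraic identity $(h_--e_-+2)-2h_-/e_-=(e_--2)(h_--e_-)/e_-$ makes the role of hypothesis~(I) particularly transparent.

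The gap is in Step~4. You are making the case $e_-=1$ much harder than it is, and the route you sketch---bounding $bb$-loops via the cyclic order of $b$-arcs, or passing to a $c$-circle-free foliation via \cite[Proposition~2.6]{ik1-1}---is both unnecessary and left unjustified. The observation you are missing is that a $bb$-tile always has four \emph{pairwise distinct} elliptic corners: just before the saddle the two merging $b$-arcs are distinct leaves of the embedded $1$-manifold $F\cap S_t$, hence have disjoint endpoints on the binding. In particular the two negative corners of any $bb$-tile are distinct, so a $bb$-tile forces $e_-\ge 2$ and $\widehat{G_{--}}$ never has loops. When $e_-=1$, every edge therefore comes from a negative $ab$-tile or $bc$-annulus and is a pendant from $v$ to a fake vertex, giving $d(v)=\#\{\text{edges}\}\le h_-=\delta(\mT,[F])+1$. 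This is exactly the paper's one-line treatment of this case, and it also dissolves the bookkeeping worries in your final paragraph about degenerate regions and extra half-edges.
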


\begin{proof}
First suppose that $e_-=1$. Let $d$ denote the valence of the unique negative elliptic point of $\F(F)$. 
Note that $h_- \geq d$ because every edge of $\widehat{G_{--}}$ contains one negative hyperbolic point. 
By the condition (II), we have 
\begin{equation}\label{eq:5.1}
\delta(\mT,[F]) = h_{-} - e_{-}.  
\end{equation}   
Thus, 
$\delta(\mT,[F]) = h_{-} - e_{-} \geq d-1$ and $d \leq \delta(\mT,[F])+1$. 

Next we assume that $e_- \geq 2$.
For $i\geq 0$, let $v_i$ be the number of vertices of $\widehat{G_{--}}$ whose valence is $i$ and let $w$ be the number of edges of $\widehat{G_{--}}$.
Then we have $$\sum_{i\geq 0}iv_i = 2w$$ and $$\chi=\sum_{i\geq 0}v_{i}-w,$$ where  $\chi=\chi(\widehat{G_{--}})$ is the Euler characteristic of the extended graph $\widehat{G_{--}}$. Therefore, 
\begin{equation}
\label{eqn:euler0} \sum_{i\geq 2}(i-2)v_i = -2\chi + v_1 +2v_0. 
\end{equation}

If there is a non-fake vertex of valence less than or equal to two, then we are done since $2 \leq \delta(\mT,[F])+2$ by the condition (I).

Suppose that every non-fake vertex has valence grater than two. (i.e., $v_{0}=v_{2}=0$). 
Then, since every fake vertex has valence one, $v_1$ is equal to the number of fake vertices.
By Definition~\ref{def:extendedgraph} we have $h_{-}\geq w$ and $$e_{-} = \sum_{i\geq 0}v_{i} - \#\{\mbox{fake vertices}\} = \sum_{i\geq 0}v_{i} - v_1.$$ 
By (\ref{eq:5.1})
\[ 
\delta(\mT, [F]) = h_{-} - e_{-} \geq w-e_{-} =w - (\sum_{i\geq 0}v_i) + v_1 = -\chi +v_1.
\]
Therefore by (\ref{eqn:euler0}) we get an inequality
\begin{equation}
\label{eqn:euler}
\sum_{i\geq 2}(i-2)v_i \leq  2\delta(\mT, [F]) - v_1. 
\end{equation}
Recall that $v_0=v_2=0$. 
Let $j=\min\{i>2 \: | \: v_i \neq 0\}$. Since $\widehat{G_{--}}$ contains at least two non-fake vertices, by (\ref{eqn:euler})
\[ 
(j-2) \cdot 2 \leq 
(j-2) \cdot e_- \leq 
(j-2) \sum_{i\geq 3} v_i \leq
\sum_{i\geq 2}(i-2)v_i \leq  2\delta(\mT, [F]) - v_1. 
\]   
Therefore, $$j \leq \delta(\mT,[F])-\frac{v_1}{2} +2 \leq \delta(\mT,[F])+2. $$
\end{proof}

Propositions \ref{prop:PreBennequin} and \ref{prop:PreBennequin2} below show that under an assumption of large FDTC and essentiality of the open book foliation, an $\alpha$-Seifert surface is `close' to an $\alpha$-Bennequin surface in the sense that its open book foliation has no negative elliptic points (but may have c-circles).

\begin{proposition}
\label{prop:PreBennequin}
Assume that the open book foliation $\F(F)$ is essential.
If $c(\phi,K,C)>\delta(\mT,[F])+2$ for every binding component $C$ that intersects $F$, then $\F(F)$ has no negative elliptic points (but possibly it has c-circles).
\end{proposition}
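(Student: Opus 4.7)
The plan is to argue by contradiction. Suppose that $\F(F)$ has a negative elliptic point. The hypotheses (I), (II), (III) of the present setup are exactly those required by Lemma~\ref{lemma:key}, which therefore applies and produces a non-fake vertex $v$ of the extended graph $\widehat{G_{--}}$ whose valence $d$ satisfies
\[ d \leq \delta(\mT,[F]) + 2. \]
By Definition~\ref{def:extendedgraph}, $v$ is a negative elliptic point of $\F(F)$; in particular $v$ lies on a binding component $C$ that intersects $F$ (namely at $v$ itself), so our FDTC assumption applies to this $C$.

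Next I would relate the valence $d$ to the integer $N$ appearing in Theorem~\ref{lemma:FDTC} taken with $n=1$ and $v_1=v$. Unpacking Definition~\ref{def:extendedgraph} region by region (see Figure~\ref{figure:graph}), every negative hyperbolic point whose singular leaf reaches $v$ must sit inside a negative ab-tile, bb-tile, or bc-annulus that has $v$ as a vertex, and the subgraph $G_R$ of each such region contributes exactly one edge of $\widehat{G_{--}}$ at $v$ passing through that hyperbolic point. Conversely, a given edge incident to $v$ may fail to carry a negative hyperbolic point (per the caption of Figure~\ref{figure:graph}). Therefore
\[ N \leq d \leq \delta(\mT,[F]) + 2. \]

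Now I apply Theorem~\ref{lemma:FDTC} with $n=1$ to the single negative elliptic point $v$ on $C$ (here essentiality of $\F(F)$ enters, since that theorem requires an essential open book foliation) to conclude
\[ c(\phi,K,C) \;\leq\; N \;\leq\; \delta(\mT,[F]) + 2, \]
which contradicts the hypothesis $c(\phi,K,C) > \delta(\mT,[F]) + 2$. Hence $\F(F)$ has no negative elliptic points, although c-circles are not ruled out by this argument. The only nontrivial step is the region-by-region comparison $N \leq d$; this is where the geometry of the singular leaves actually enters, but once the pictures of $G_R$ in each of the relevant region types are matched with the separatrix configurations of negative hyperbolic points, the inequality is immediate and the rest of the proof is a direct assembly of Lemma~\ref{lemma:key} and Theorem~\ref{lemma:FDTC}.
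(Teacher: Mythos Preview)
Your proof is correct and follows essentially the same route as the paper's own argument: assume a negative elliptic point exists, invoke Lemma~\ref{lemma:key} to find a non-fake vertex of small valence, and feed this into Theorem~\ref{lemma:FDTC} to contradict the FDTC hypothesis. The paper's proof compresses the passage from ``valence $\leq \delta(\mT,[F])+2$'' to ``$c(\phi,K,C)\leq \delta(\mT,[F])+2$'' into a single sentence, whereas you spell out the intermediate inequality $N\leq d$ by a region-by-region inspection; this added detail is accurate and harmless.
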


\begin{proof}
Assume to the contrary that the braid foliation $\F(F)$ has negative elliptic points. 
By Lemma \ref{lemma:key}, there exists a non-fake vertex $v$ of $\widehat{G_{--}}$ whose valence is less than or equal to $\delta(\mT,[F])+2$. By Theorem \ref{lemma:FDTC}, this implies that $c(\phi,K,C) \leq \delta(\mT,[F])+2$, which contradicts our assumption.
\end{proof}

If $F$ intersects exactly one binding component then 
we can say more with a smaller lower bound on the FDTC.

\begin{proposition}
\label{prop:PreBennequin2} 
Assume that the open book foliation $\F(F)$ is essential. 
Let $C$ be a binding component. Let $e_C$ denote the number of negative elliptic points in $\F(F)$ that are on the binding component $C$.
\begin{enumerate}
\item
If $c(\phi,K,C)> \frac{1}{k}\delta(\mT,[F]) + 1$ for some $k\geq 1$ and $\delta(\mT,[F]) >0$
then $e_C < k$.

\item
If $c(\phi,K,C)> 1$ and $\delta(\mT,[F])=0$ then $e_C=0$. 

\item  
If $c(\phi,K,C)> 1$ for {\em all} the binding components and $\delta(\mT,[F])=0$ then $e_-=h_-=0$.  
\end{enumerate}
\end{proposition}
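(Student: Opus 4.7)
The plan is to prove parts (1) and (2) by contradiction, combining Theorem \ref{lemma:FDTC} applied to well chosen subsets of $C$-elliptics with a refinement of the Euler characteristic argument from the proof of Lemma \ref{lemma:key} localized to the part of $\widehat{G_{--}}$ near $C$; part (3) will then be immediate from part (2) applied binding component by binding component.

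For part (1), I would suppose for contradiction that $e_C \geq k$ and apply Theorem \ref{lemma:FDTC} to the full set of $e_C$ negative elliptic points on $C$ to obtain $c(\phi,K,C) \leq N_C/e_C$, where $N_C$ denotes the number of negative hyperbolic points joined to at least one $C$-elliptic by a singular leaf. The hypothesis gives $N_C > (\delta/k + 1)e_C \geq \delta + e_C$, while the global bound $N_C \leq h_- = \delta + e_-$ from condition (II) forces $e_- > e_C$. To extract the actual contradiction I would rerun the Euler characteristic computation $\sum_{i\geq 2}(i-2)v_i \leq 2\delta - v_1$ from the proof of Lemma \ref{lemma:key}, restricted to the subgraph $\widehat{G_{--}}^{C}$ of $\widehat{G_{--}}$ consisting of the connected components containing at least one $C$-elliptic. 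The identity $h_- - e_- = \delta$ together with the fact that each connected component of $\widehat{G_{--}}$ has Euler characteristic at most $1$ should sharpen the averaging argument enough to produce a $C$-elliptic of valence at most $\delta/k + 1$; applying Theorem \ref{lemma:FDTC} with $n=1$ to that single vertex then yields $c(\phi,K,C) \leq \delta/k + 1$, contradicting the hypothesis.

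For part (2), specializing to $\delta = 0$ turns the inequality into $\sum_{i\geq 2}(i-2)v_i \leq -v_1$, which forces either the existence of a non-fake vertex of valence at most $1$ or that $\widehat{G_{--}}$ is a disjoint union of embedded cycles in $F$ with no fake vertices. The $C$-localization from part (1) should guarantee that in whichever alternative holds, one can find a non-empty subset $S$ of $C$-elliptics with $|N(S)| \leq |S|$: either a $C$-vertex of valence at most $1$, a cycle of $\widehat{G_{--}}$ lying entirely on $C$, or, using essentiality of $\F(F)$ to constrain the $bb$-arc configurations along a cycle meeting several binding components, a maximal consecutive run of $C$-elliptics on such a cycle. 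Theorem \ref{lemma:FDTC} then gives $c(\phi,K,C) \leq 1$, contradicting the hypothesis. Part (3) is immediate: the hypothesis $c(\phi,K,C) > 1$ for every binding component $C$ combined with part (2) yields $e_C = 0$ for every $C$, whence $e_- = 0$ and $h_- = \delta + e_- = 0$ by condition (II).

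The principal obstacle will be the localization of the Euler characteristic argument for part (1): tracking how non-$C$-elliptic vertices and fake vertices contribute to the valence sum inside the $C$-relevant components of $\widehat{G_{--}}$ requires careful bookkeeping. The key leverage should be the global identity $h_- - e_- = \delta$ applied to the complementary subgraph $\widehat{G_{--}} \setminus \widehat{G_{--}}^{C}$, which bounds the number of edges in $\widehat{G_{--}}^{C}$ and hence the average valence of the $C$-elliptics. For part (2), the analogous subtlety is ruling out cycles of $\widehat{G_{--}}$ that meet both $C$ and other binding components, where essentiality of the open book foliation should supply the missing topological constraint.
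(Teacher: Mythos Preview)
Your proposal massively overcomplicates the argument and never actually closes. The paper's proof of parts (1) and (2) is a single direct application of Theorem~\ref{lemma:FDTC}, with no graph-theoretic or Euler-characteristic refinement whatsoever. Assuming $e_C \geq 1$, one applies Theorem~\ref{lemma:FDTC} to the full collection of $e_C$ negative elliptic points on $C$ (exactly your opening move) to obtain $c(\phi,K,C) \leq N/e_C \leq h_-/e_C$. Condition (II) gives $h_- = \delta(\mT,[F]) + e_-$, so
\[
c(\phi,K,C) \;\leq\; \frac{\delta(\mT,[F])}{e_C} + \frac{e_-}{e_C} \;\leq\; \frac{\delta(\mT,[F])}{e_C} + 1,
\]
the last step being the paper's key simplification (recall the sentence immediately preceding the proposition, ``If $F$ intersects exactly one binding component\ldots'', which is the only situation in which the proposition is ever applied, in the proofs of Theorems~\ref{theorem:main1} and~\ref{theorem:main2}). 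Comparing with the hypothesis $c(\phi,K,C) > \delta(\mT,[F])/k + 1$ gives $e_C < k$ immediately, which is part (1); setting $\delta(\mT,[F]) = 0$ gives $c(\phi,K,C) \leq 1$, contradicting the hypothesis of part (2); and part (3) follows by applying (2) to each binding component and using $e_- = \sum_C e_C$ together with $h_- = \delta(\mT,[F]) + e_- = 0$, exactly as you say.

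In other words, your first sentence for part (1) already contains essentially the entire proof. You correctly reach $c(\phi,K,C) \leq N_C/e_C \leq h_-/e_C$, but then, instead of comparing this directly with the hypothesis, you chase the weaker consequence $e_- > e_C$ and launch into a localized Euler-characteristic computation on subgraphs $\widehat{G_{--}}^{C}$. None of that machinery --- the restriction to $C$-components of $\widehat{G_{--}}$, the cycle analysis for $\delta = 0$, the ``maximal consecutive run'' argument using essentiality --- is invoked in the paper, and the steps you yourself flag as ``the principal obstacle'' are only sketched (``should sharpen'', ``should guarantee'', ``should supply''), never carried out. The difficulty you anticipate does not arise, because Lemma~\ref{lemma:key} and the graph $\widehat{G_{--}}$ play no role here at all.
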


\begin{proof}
If $e_- = 0$ then we are done. 

We may assume that $e_- \geq 1$.  
Let $e_i$ denote the number of {\em negative} elliptic points that are on the binding component $C_i$. We have $e_-=\sum_i e_i$. 

Suppose that $e_i\geq 1$. 
Let $N (\geq 0)$ be the number of negative hyperbolic points of type either ab, bb or bc.
Note that $h_- \geq N$. 
Every negative hyperbolic point of type ab, bb or bc is connected to at least one negative elliptic point by a singular leaf.
By Theorem \ref{lemma:FDTC} and the condition (II) we have 
\begin{equation}\label{eq:inequality}
c(\phi,K,C_i) \leq \frac{N}{e_i} \leq \frac{h_-}{e_i}= \frac{\delta(\mT,[F])}{e_i}  + \frac{e_-}{e_i} \leq \frac{\delta(\mT,[F])}{e_i}  + 1.
\end{equation}

\begin{enumerate}
\item
Assume that 
\begin{equation}\label{eq:inequality2}
\frac{1}{k}\delta(\mT,[F]) + 1<c(\phi,K,C)
\end{equation} 
for some $k\geq 1$ and $\delta(\mT,[F]) >0$. 

If $k=1$ and $e_i\geq 1$ then inequalities (\ref{eq:inequality}) and (\ref{eq:inequality2}) yield $e_i < 1$ which is a contradiction. 
Therefore, when $k=1$ we must have $e_i=0$. 

If $k\geq 2$ and $e_i\geq 1$ then inequalities (\ref{eq:inequality}) and (\ref{eq:inequality2}) yield $e_i < k$. 

\item
If $\delta(\mT,[F])=0$ and $1< c(\phi,K,C_i)$ then inequality (\ref{eq:inequality})  gives $1 < 1$, which is a contradiction. 
Therefore in this case $e_i=0$. 

\item
The last statement follows from (2) and $e_-=\sum_i e_i$. 
\end{enumerate}
\end{proof}

The next proposition gives a criterion of strongly quasipositive braids. 

\begin{proposition}
\label{prop:noac}
Assume the following.  
\begin{enumerate}
\item[(i)] All the elliptic and hyperbolic points of $\F(F)$ are positive. 
\item[(ii)] The page $S$ is planar.
\item[(iii)] Only one binding component intersects $F$. 
\end{enumerate}
Then $F$ is an $[F]$-Bennequin surface and $K$ is a strongly quasipositive braid. 
\end{proposition}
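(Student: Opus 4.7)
The plan is to show that, under hypotheses (i)--(iii) together with the standing assumptions (I)--(III) of this section, the region decomposition of $\F(F)$ consists entirely of aa-tiles. Once this is established, $F$ is an $[F]$-Bennequin surface by Definition~\ref{definition:BEsurface_geom}(1); and because every hyperbolic point is positive by (i), the band-twist factorization of $\beta_K$ produced in the proof of Proposition~\ref{prop:4.5} consists only of positive band-twists, so that $K$ is strongly quasipositive.

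First I would eliminate b-arcs. The leaves of $\F(F)$ are oriented so that positive elliptic points are sources and negative elliptic points are sinks. A b-arc is a single oriented leaf joining two elliptic points, so consistency of its orientation forces one endpoint to be a source and the other a sink; in particular every b-arc must have exactly one positive and one negative elliptic endpoint. Since assumption (i) rules out negative elliptic points entirely, $\F(F)$ contains no b-arcs, and the region decomposition has no ab-, bb-, or bc-tile.

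The main obstacle is to eliminate c-circles, thereby excluding ac- and cc-regions as well. Here I would use both (ii) and (iii) together with the incompressibility of $F$ coming from (II). Suppose for contradiction that $\F(F)$ admits a c-circle in some page $S_t$. Since $S$ is planar and the only binding component meeting $F$ is $C$, one can choose a c-circle $c$ so that the side $D\subset S_t$ of $c$ not containing $C\cap S_t$ contains no further c-circle of $\F(F)\cap S_t$. Inside $D$ there are then no c-circles by this choice, no b-arcs by the previous step, and no a-arcs because any a-arc would need an endpoint on $C$ while $C\cap S_t\notin D$. Hence $D\cap F=c$, so $D$ becomes a candidate compressing disk for $F$. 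By incompressibility, $c$ must bound a subdisk $D'\subset F$; but then $D'$ is disjoint from $K$, since $c$ separates $F$ and $K$ lies in the complementary component, while $\chi(D')=1$ combined with the positivity of all singularities forces at least one elliptic point to lie in $D'$. This elliptic point must emit a-arcs (b-arcs are already excluded), and those a-arcs must terminate on $K$; however, $K$ is disjoint from $D'$ and the boundary $c=\partial D'$ is itself a leaf which no other leaf can cross. The resulting contradiction rules out c-circles.

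With both b-arcs and c-circles excluded, every region in the decomposition of $\F(F)$ is an aa-tile, so $F$ is an $[F]$-Bennequin surface. The positivity of all hyperbolic points from (i) then ensures that the associated band-twist factorization of $\beta_K$ is a product of positive band-twists, and hence $K$ is strongly quasipositive. The delicate step, where planarity of $S$ and the single-binding hypothesis (iii) are essential, is ensuring that the page-disk $D$ cut off by an innermost c-circle is clean enough to play the role of a compressing disk.
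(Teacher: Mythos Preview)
Your elimination of b-arcs is fine, and the closing paragraph is fine. The gap is in the c-circle step. You take an innermost $c$ and let $D\subset S_t$ be the side of $c$ not containing $C$. You then treat $D$ as a compressing disk. But $S$ is only assumed to be planar, not a disk: the side of $c$ away from $C$ may contain other boundary components of $S_t$, namely binding components $C'\neq C$ that happen not to meet $F$. In that case $D$ is a planar surface with $\partial D=c\cup C'_1\cup\cdots\cup C'_k$, not a disk, and it does not furnish a compressing disk for $F$. Nor can you cap off the extra boundary circles in $M$: the $C'_i$ are binding components, hence possibly nontrivial knots in $M$, and there is no reason they bound disks disjoint from $F$. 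So the incompressibility argument does not go through for general planar pages; it only works verbatim when $S=D^2$, which is strictly weaker than hypothesis (ii).

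The paper's proof avoids this issue entirely and, in fact, never uses incompressibility (the authors note after the proof that conditions (I) and (II) are not used). Instead, they orient the c-circles and call a c-circle \emph{coherent} or \emph{incoherent} according to whether its leaf orientation matches the boundary orientation of the region it cuts off on the $C$ side. A case analysis of positive ac- and cc-singularities (using planarity of $S$ and that all a-arcs emanate from the single binding component $C$) shows that, as $t$ increases, the number of incoherent c-circles never decreases and the number of coherent c-circles never increases. Since the monodromy identifies $S_1$ with $S_0$, both counts must in fact be constant; but an ac-annulus forces a strict change in one of them. Hence there are no ac-annuli, and connectedness of $F$ rules out cc-pants as well, leaving only aa-tiles. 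This argument uses (ii) and (iii) exactly where yours does, but sidesteps the compressing-disk issue altogether.
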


\begin{proof}
Let $C$ be the unique binding component that intersects $F$. 
By the assumption (ii), if there exists a c-circle, $c,$ in a page $S=S_t$ then $c$ separates $S$ into two components. 
Let $X$ be the connected component of $S \setminus c$ that contains $C$. 

Recall our orientation convention for leaves as defined in Section~\ref{section:OBFsummary}.  
We say that $c$ is \emph{coherent} with respect to $C$ if the leaf orientation of $c$ agrees with the boundary orientation of $c \subset \partial X$. Otherwise, we say that $c$ is \emph{incoherent}. 
For simplicity, we omit writing `with respect to $C$' in the following.

By the assumption (i), there are no negative elliptic points. 
Therefore, 
the region decomposition of $F$ consists of only aa-tiles, ac-annuli and cc-pants each of which has a positive hyperbolic point.

First, let us consider 
how an ac-singular point changes the types of local regular leaves.

\noindent
(1) 
An a-arc forms a positive hyperbolic point $h$ with itself then splits into an a-arc and a c-circle, $c$, see Figure~\ref{figure:coherent} (1). 
By the assumption (iii), every a-arc starts at $C$. 
This shows that the c-circle $c$ must be incoherent. 

\noindent
(2) 
An a-arc and a c-circle merge and form a positive hyperbolic point $h$. Then they become one a-arc, see Figure~\ref{figure:coherent} (2).  
By the assumption (iii), this c-circle must be coherent.

Next, let us consider how a cc-hyperbolic point changes the types of local regular leaves.

\noindent
(3)
Suppose that a c-circle forms a positive cc-hyperbolic point $h$ with itself then splits into two c-circles. 
There are two possibilities. 

\noindent
(3-a) 
An incoherent c-circle splits into two incoherent c-circles, see Figure~\ref{figure:coherent} (3-a).  

\noindent
(3-b)
A coherent c-circle splits into one coherent c-circle and one incoherent c-circle, see Figure~\ref{figure:coherent} (3-b).  

\noindent(4) 
Suppose that two c-circles merge and form a positive cc-hyperbolic point then become one c-circle. There are two possibilities. 

\noindent
(4-a) Two coherent c-circles merge into one coherent c-circle, see Figure~\ref{figure:coherent} (4-a).

\noindent
(4-b) One coherent c-circle and one incoherent c-circle merge 
into one incoherent c-circle, see Figure~\ref{figure:coherent} (4-b).  
\begin{figure}[htbp]
 \begin{center}
\includegraphics*[width=120mm, 
bb=111 189 468 721]
{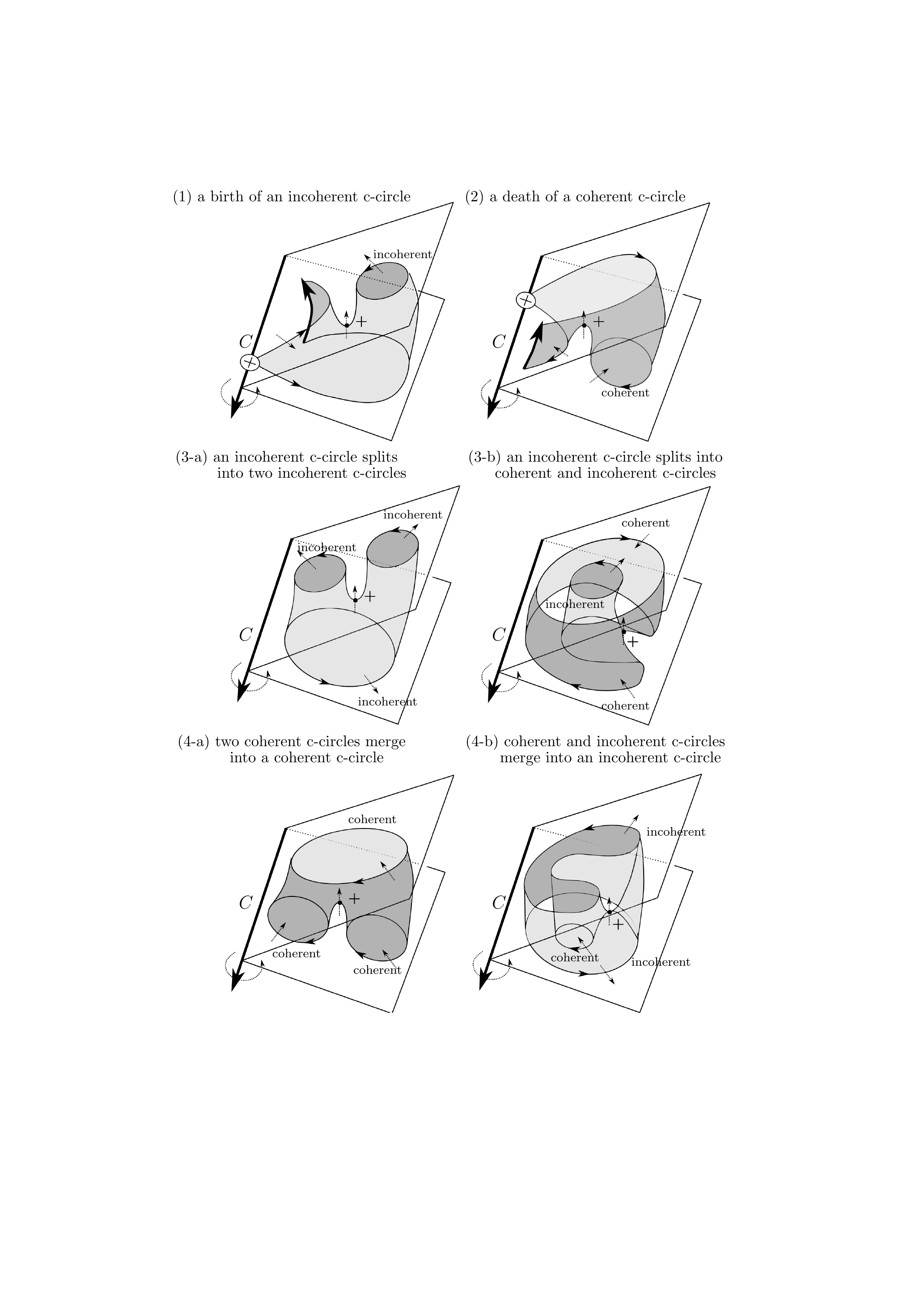}
\caption{
Coherent and incoherent c-circles near positive hyperbolic points. 
The positive (negative) side of the oriented surface $F$ is colored light (dark) gray. 
Dashed arrows are positive normal vector $\vec{n}_F \in T_p S_t \subset T_pM$ to the surface $F$. }
\label{figure:coherent}
  \end{center}
\end{figure}

The above discussion shows that passing a type ac or cc positive hyperbolic point never decreases (resp. increases) the number of incoherent (resp. coherent) c-circles. 
For a regular page $S_t$ $(t \in [0,1])$, let $N(t)$ be the number of incoherent c-circles in $S_t$. 
Since a type aa hyperbolic point does not affect c-circles we get $N(t) \leq N(t')$ for $t <t'$.

Our strategy is to show that all the regions in the region decomposition are of type aa; hence, $F$ is an  $[F]$-Bennequin surface. 

Assume to the contrary that there exist c-circles. 
If no a-arcs interact with those c-circles (i.e., no ac-annuli exist), then the surface $F$ contains a component consisting of only aa-tiles. 
In other words, $F$ is disconnected, which is a contradiction. 

Therefore, $\F(F)$ contains ac-annuli.

If at least one ac-annuls of type (1)  exists then we have an strict inequality $N(0) < N(1)$.
However, the page $S_1$ is identified with the page $S_0$ by the monodromy $\phi$ of the open book. Since $F$ is orientable, $\phi$ identifies an incoherent c-circle in $S_1$ with an incoherent c-circle in $S_0$, which means $N(0)=N(1)$. This is a contradiction.

If $\F(F)$ contains an ac-annulus of type (2) then a parallel argument about the number of coherent c-circles holds and we get a contradiction. 

Thus, c-circles do not exist.
\end{proof}

Careful readers may notice that the conditions (I) and (II) are not used in the proof.

\subsection{Proofs of the main results}

\begin{proof}[Proof of Theorem \ref{theorem:main1}]
($\Leftarrow$) The statement is trivial. 

($\Rightarrow$) We assume $\delta(\mT,\alpha)=0$ and show that $K$ is an $\alpha$-strongly quasipositive braid.

By the assumptions (ii) and (iii-a) and Corollary \ref{cor:essential}, after  desumming essential spheres, we may assume that the new $F$ (we abuse the same notation) admits an essential open book foliation $\F(F)$.
Here desumming an essential sphere can be understood as a disk exchange: We remove  an embedded disk $D \subset F$ from $F$ and then put back a disk $E \subset M \setminus F$ such that $E \cup D$ is an essential sphere $\mathcal{S}$. 
Corollary \ref{cor:essential} states that the binding component $C$ in the condition (iii-b) is still the only binding component that intersects the new $F$.

Applying the assumptions $\delta(\mT,\alpha)=0$ and (iii-c) to Proposition \ref{prop:PreBennequin2}, we can conclude that $\F(F)$ has $e_- =h_-= 0$. 
By Proposition \ref{prop:noac} and the assumptions (i) and (iii-b), $F$ is an $\alpha$-Bennequin surface with the strongly quasipositive boundary $K$.
\end{proof}

\begin{proof}[Proof of Theorem \ref{theorem:main2}]
Let $F$ be a minimum genus Seifert surface of $K$.
By Corollary~\ref{cor:essential}, we may assume that $F$ admits an essential open book foliation.

In the case of disk open book $(S, \phi)=(D^2, id)$,  
\cite[Lemma 2]{bm1} and \cite[Lemmas 1.2 and 1.3]{bf} show  that any incompressible surface can be put in a position so that its open book foliation is essential without c-circles. 
Therefore, our Seifert surface $F$ admits an essential open book foliation without c-circles.  
This means that $F$ contains no bc-tiles and all the fake vertices (if they exist) of $\widehat{G_{--}}$ lie on $K$.

Since $c(id,K, \partial D^{2}) > \frac{\delta(\mT)}{2}+1$, 
Proposition \ref{prop:PreBennequin2} implies that $\F(F)$ has at most one negative elliptic point; $e_-=0$ or $1$. 

If $e_-=0$ then the region decomposition of $\F(F)$ consists of only aa-tiles; that is, $F$ is a Bennequin surface.

If $e_-=1$ then $\F(F)$ does not contain bb-tiles. 
Let $v$ denote the unique negative elliptic point. 
All the ab-tiles of $\F(F)$ meet at $v$. 
Suppose that the valence of $v$ in the graph $\widehat{G_{--}}$ is $N$. 
By Theorem \ref{lemma:FDTC} we have 
$$\frac{\delta(\mT)}{2}+1 < c(id,K, \partial D^{2}) \leq \frac{N}{1}=N,$$ 
which shows that there are $N \geq 2$ negative ab-tiles meeting at $v$.
By applying a positive stabilization along one of the negative ab-tiles (cf. Figure~\ref{fig:stabilization} (ii)) we may remove the negative elliptic point $v$. 
Note that the genus of the surface is preserved. 
As a consequence we get a Seifert surface whose region decomposition consists of only aa-tiles.  
Hence by Definition~\ref{definition:BEsurface_geom}-(1) it is a Bennequin surface.

Moreover, if $\delta(\mT)=0$ and $c(id, K, \partial D^2) >1$ then by Proposition \ref{prop:PreBennequin2}-(3) we have $e_- =h_-= 0$.  
The Seifert surface $F$ is already a Bennequin surface without negatively twisted bands; hence, by Definition~\ref{definition:BEsurface_geom}-(2) $K$ is strongly quasipositive.
\end{proof}

\subsection{Examples}

We close the paper with examples related to the main results.  
Some of the examples are described via \emph{movie presentations}. 
A movie presentation is a sequence of slices of a Seifert surface by some pages $S_{t}$. See \cite[p.1597]{ik1-1} for the definition of a movie presentation.

\begin{example}
\label{ex1}
First we see that the planar condition (i) of Theorem \ref{theorem:main1} is necessary.

Suppose that $S$ is an oriented genus $1$ surface with connected boundary. 
Choose $\phi$ so that the the manifold $M_{(S, \phi)}$ is a rational homology sphere. The condition (ii) of Theorem \ref{theorem:main1} is automatically satisfied. Since the Seifert surface class is uniquely determined we may drop $\alpha$- from our notation.

Take a base point near the boundary so that $\phi$ fixes it.
Let $\rho$ be an oriented loop at this base point as depicted in Figure~\ref{fig:rho} (1). 
Under the identification $B_{1}(S) = \pi_{1}(S)$ we may identify $\rho$ with a 1-braid in the surface braid group $B_1(S)$. 
\begin{figure}[htbp]
 \begin{center}
\includegraphics*[width=85mm,bb=176 379 434 715]
{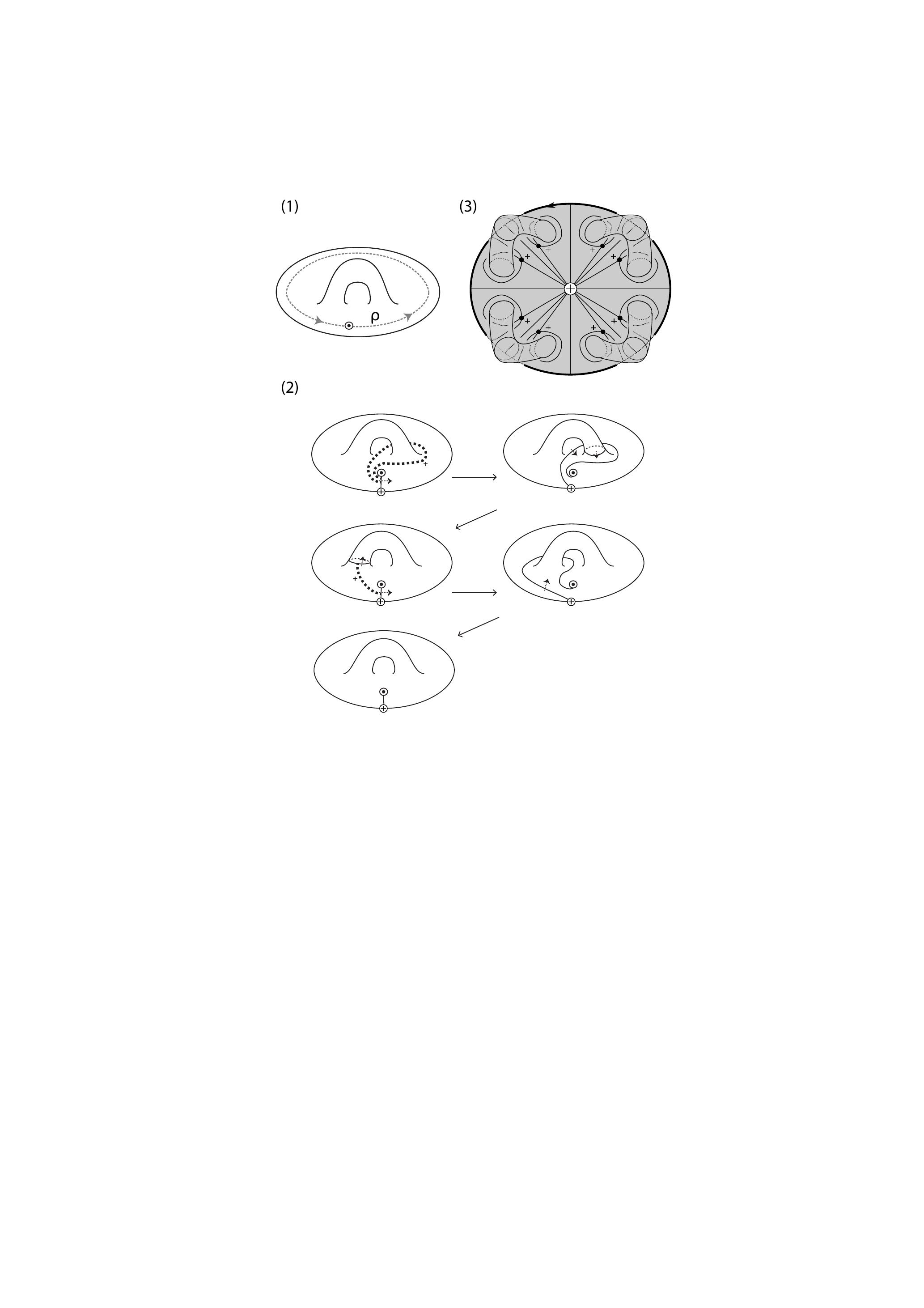}
\caption{(1): The oriented loop $\rho$. 
(2) A movie presentation of the Seifert surface $F$ for the closed braid $K_N$ where $N=1$. For $N\geq 2$ iterating the movie $N$ times gives the surface $F$. 
(3) A open book foliation of $F$ where $N=4$.}
\label{fig:rho}
  \end{center}
\end{figure}

For $N\geq1$, let $K_{N}$ be the closure of the 1-braid $\rho^{N}$ with respect to the open book $(S,\phi)$. Then $c(\phi,K_N,\partial S)=N$.  The condition (iii-c) is satisfied if $N>1$. 

Note that $K_1$ is smoothly isotopic to the binding of the open book. 
This shows that $g(K_1)= g(S)=1$. Since $K_N$ is an $(N,1)$-cable of the binding, $K_N$ 
is a connected sum of $N$ copies of $K_1$, which yields $g(K_N)=N$. 
The Seifert surface $F$ of $K_N$ defined by 
the movie presentation in Figure~\ref{fig:rho} (2) gives a genus $N$ surface. Therefore, the condition (iii-a) is satisfied.

The movie presentation also determines the open book foliation $\F(F)$ of $F$ as depicted in Figure~\ref{fig:rho} (3). 
We observe that $\F(F)$ is essential (there are no b-arcs) and all the hyperbolic and elliptic points are positive. 
By Lemma~\ref{lemma:sl-from-ob} it follows that $\delta(K_{N})=0$.

If $K_N$ were a strongly quasipositive braid bounding a Bennequin surface $F'$ then 
due to the one-strand constraint 
the open book foliation $\F(F')$ must be built of only a-arcs emanating from a single positive elliptic point. 
This means that $K_N$ is a meridional circle of the binding; that is, an unknot. This contradicts the above conclusion $g(K_N)=N\neq 0$.

We conclude that 
if $N>1$, all the conditions of Theorem~\ref{theorem:main1} are satisfied except for the planar assumption (i) on $S$, and $K_N$ is not a strongly quasipositive braid. 
\end{example}

\begin{example}\label{example2}
With the above example we can further see that the stronger forms of Conjectures~\ref{conjecture:generalSQP} and \ref{conjecture:SQP} do not hold for general open books $(S, \phi) \neq (D^2, id)$.

More concretely, we show that the transverse knot type $\mT$ represented by the closed braid $K_{1}$ in Example~\ref{ex1} does bound a minimum genus Bennequin surface (indeed, is strongly quasipositive) at the cost of raising the braid index. 

To see this, we consider a different Seifert surface $F'$ of $K_1$ given by a movie presentation as depicted in Figure~ \ref{fig:example2}. 
Using the Euler characteristic formula in Lemma~\ref{lemma:sl-from-ob} we see that both $F$ and $F'$ have genus $1$, which is the genus of the transverse knot type $\mT$. 
However the open book foliations of $F$ and $F'$  are different. 
For instance, the region decomposition of the open book foliations $\F(F)$ consists of two ac-annuli whereas $\F(F')$ consists of four ab-tiles. 
More precisely, $\F(F')$ contains one negative elliptic point and one negative hyperbolic point and they belong to the same unique negative ab-tile. 
\begin{figure}[htbp]
\begin{center}
\includegraphics*[width=85mm]{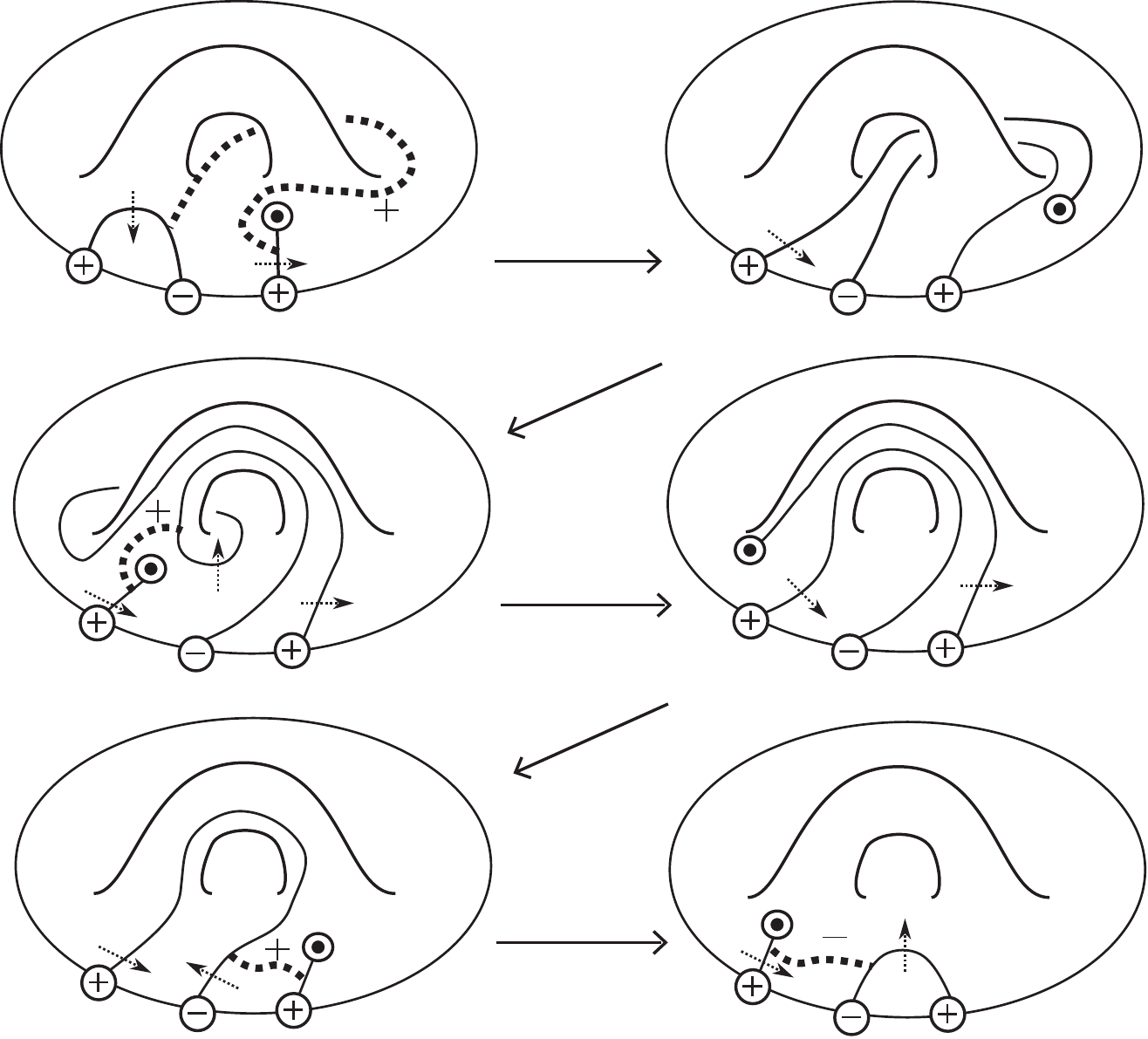}
\caption{A movie presentation of the Seifert surface $F'$ for $K_1$.}
\label{fig:example2}
\end{center}
\end{figure}

By a positive stabilization along the negative ab-tile we can remove both the negative elliptic and negative hyperbolic points of $F'$.
Since any stabilization preserves the Euler characteristic of the surface, the resulting surface, $F''$, also has genus 1. 
The surface $F''$ consists of only positive aa-tiles and its boundary is a closed braid of braid index 2. 

In summary, we obtain a minimum genus Bennequin surface $F''$ of $\mT$ whose boundary is a strongly quasipositive $2$-braid.
Knowing that the braid index $b(\mT)=1$ and $\mT$ is not an unknot, any closed 1-braid representatives of $\mT$ are not strongly quasipositive.

For the closed braid $K_N$ of $N\geq 2$, we add extra $N-1$ pairs of positive and negative elliptic points as shown in Figure~\ref{fig:example2add}. 
The movie presentation in Figure~\ref{fig:example2add} gives a Seifert surface $F'_{N}$ for $K_N$ and $F'_N$ has genus $N$. 
A parallel argument for $F'=F'_1$ works for general $F'_N$ and we obtain the same conclusion.  
\begin{figure}[htbp]
 \begin{center}
\includegraphics*[width=90mm,bb=148 367 462 714]{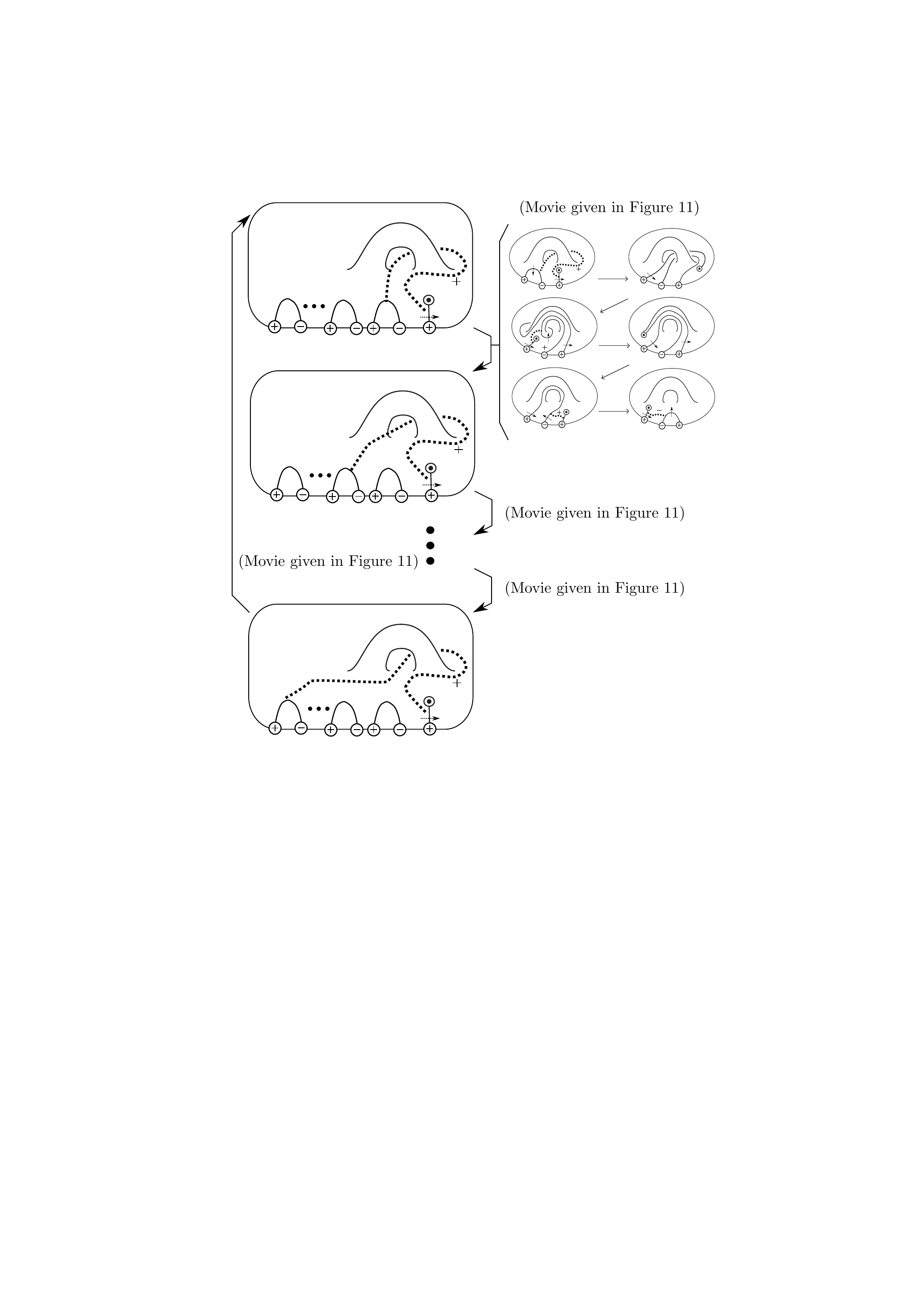}
\caption{A movie presentation of the Seifert surface $F'_N$ for $K_N$.}
\label{fig:example2add}
  \end{center}
\end{figure}
\end{example}

\begin{example}\label{ex2}
Next we see that the  condition (iii-c) on the FDTC in Theorem~\ref{theorem:main1} is also necessary.

Let $S$ be a genus 0 surface with four boundary components 
$C_0,C_1,C_2,C_3$. 
Let $X$ be a simple closed curve that separates $C_1$ and $C_2$ from $C_3$ and $C_4$. 
See Figure~\ref{fig:cexample1} (1).
Let $\phi \in {\rm Diffeo}^+(S)$ be a diffeomorphism defined by 
$$\phi = T_{X}T_{C_1}^{n_1}T_{C_2}^{n_2}T_{C_3}^{n_3},$$ 
where $T_X$ (resp. $T_{C_i}$) denotes a positive Dehn twist about $X$ (a circle parallel to $C_i$) and $n_1,n_2,n_3 \in \Z\setminus \{0\}$.  
Since  $n_1,n_2,n_3 \neq 0$ the ambient manifold $M=M_{(S, \phi)}$ has $H_1(M;\Z)=\Z/n_1\Z \oplus \Z/n_2\Z\oplus \Z/n_3\Z$ (cf. \cite[p.3136]{EO}) and it yields $H_2(M; {\mathbb Q})=0$ by the universal coefficient theorem; hence, $M$ is a rational homology sphere and the condition (ii) of Theorem \ref{theorem:main1} is automatically satisfied.

The movie presentation shown in Figure~\ref{fig:cexample1} (3) gives a surface, which we call $D$.
The trace of the point $\odot$ gives the boundary, $K$, of $D$. 
In particular, $K$ is a 1-braid with respect to $(S, \phi)$. 
The open book foliation of $D$ as depicted in Figure~\ref{fig:cexample1} (2) shows that 
\begin{itemize}
\item $D$ is a disk and $K$ is an unknot ((iii-a) is satisfied). 
\item Among all the binding components of $(S,\phi)$, only $C_0$ intersects $D$ ((iii-b) is satisfied). 
\item $c(\phi,K,C_0)=0$, which is obtained by noticing that 
the arc $\gamma$ in Figure \ref{fig:cexample1} (1) is fixed by $\phi_K$ and then 
applying \cite[Lemma 4.1]{ik2}.
\end{itemize}
Therefore, all the conditions of Theorem \ref{theorem:main1} are satisfied except for the condition (iii-c) on the FDTC. 
Indeed, the region decomposition of $D$ consists of two ab-tiles and $D$ is not even a Bennequin surface; thus $K$ is not a strongly quasipositive braid. 
(We remark that after one positive stabilization, we get a strongly quasipositive braid representative of the transverse knot type $[K]$).

\begin{figure}[htbp]
 \begin{center}
\includegraphics*[width=100mm, bb=148 482 462 717]{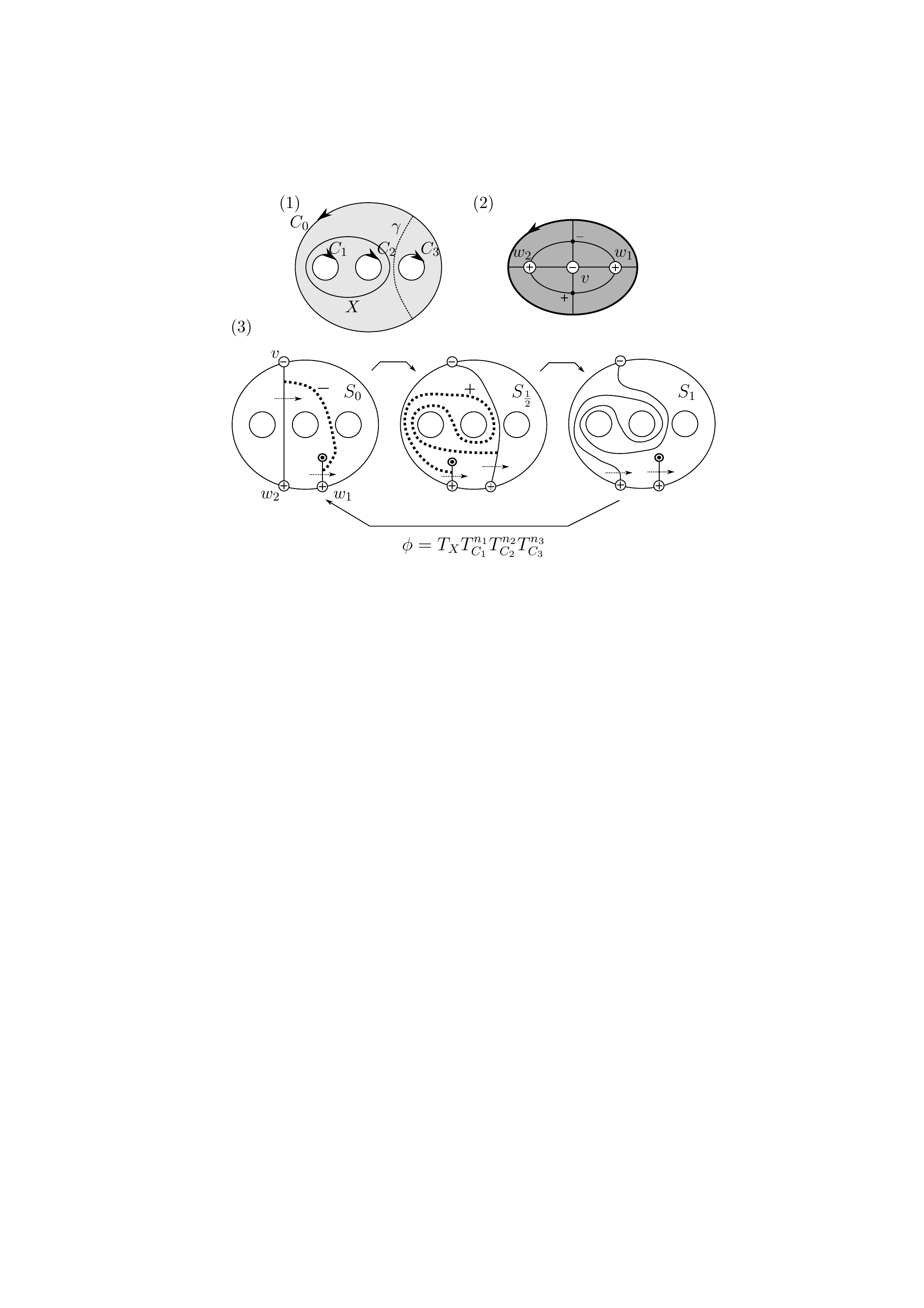}
\caption{(1) The page surface $S$. (2) The open book foliation of $D$. (3) A movie presentation of $D$.}
\label{fig:cexample1}
  \end{center}
\end{figure}
\end{example}

\begin{example}[Example for Theorem~\ref{theorem:main2}]
\label{example:main}

We demonstrate that the condition  $c(id,K,\partial D^{2}) > \frac{\delta(\mT)}{2}+1$ in Theorem~\ref{theorem:main2} can be satisfied by links that are neither 3-braid links or fibered knots. 
That is, Theorem~\ref{theorem:main2} is independent of Corollary~\ref{cor:3braids} and Hedden's \cite[Theorem 1.2]{he}.

For a non-negative integer $\delta \geq 0$, let us consider an $n$-braid word of the form $w=xy$ where $x \in B_n$ is a strongly quasipositive braid word and $y\in B_n$ is a braid word containing $\delta$ negative band generators.

Let $K$ be the closure of $w$ and $\mT$ be the transverse knot type represented by $K$. 
Let $e_{\pm}$ (resp. $h_\pm$) denotes the number of $\pm$ elliptic (resp. hyperbolic) points in the open book foliation $\F(F_w)$ of the Bennequin surface $F_{w}$ associated to the band-twist factorization $w$. 
By Lemma~\ref{lemma:sl-from-ob} we get  $\delta(\mT)\leq  h_{-} - e_-$. 
The proof of Proposition~\ref{prop:4.5} shows that $h_-=\delta$ and $e_-=0$; therefore, $$\delta(\mT)\leq \delta.$$

Let $c:B_n \rightarrow \Q$ be the FDTC map defined by $c(\beta):=c(id,\widehat{\beta},\partial D^{2})$. The map has the following properties for $\alpha, \beta \in B_n$. 
\begin{itemize}
\item[(i)] $|c(\alpha \beta)-c(\alpha)-c(\beta)| \leq 1$ and $c(\alpha)=c(\beta^{-1}\alpha \beta)$. \item[(ii)] If $p\in B_n$ is a strongly quasipositive word then 
$c(\alpha p \beta) \geq c(\alpha \beta) \geq c(\alpha p^{-1} \beta)$ (indeed this holds for right-veering braids $p$ \cite[Corollary 3.1]{ik4}). 
\item[(iii)] $c(\sigma_{i,j}^{\pm 1}) = 0$.
\item[(iv)] If $\beta$ is the product of $m$ negative band generators then  $c(\beta) > -\frac{m+1}{n}$. 
\end{itemize}
Property (i) can be found in \cite{M} and \cite[Corollary 4.17]{ik2}.
Property (iii) follows from \cite[Lemma 4.13]{ik2}. Property (iv) follows from the proof of \cite[Proposition 2.4]{it0}, which is an estimate of another invariant of braids called the Dehornoy floor $[\beta]_{D}$, together with \cite[Lemma 4.13]{ik2}.

By (ii) and (iv) we have $c(y) > - \frac{\delta+1}{n}$. 

Now let us take a strongly quasipositive braid word $x$ such that
$c(x) \geq \frac{\delta}{2} + \frac{\delta+1}{n} +2$: 
For example, 
$x=(\sigma_{1,2}\sigma_{2,3} \cdots\sigma_{n-1,n}\sigma_{1,n})^{N}$ for $N \geq \frac{1}{2}\delta + \frac{\delta+1}{n} + 2$ satisfies this condition (see Remark \ref{remark:dualGarside}). 

By (i) we have
\[ c(w)\geq c(x)+c(y)-1 > \left( \frac{\delta}{2} + \frac{\delta+1}{n} +2 \right) - \frac{\delta+1}{n}-1 = \frac{\delta}{2}+1 \geq \frac{\delta(\mT)}{2}+1.
\] 
The closed braid $K$ satisfies the conditions in Theorem~\ref{theorem:main2} and Corollary~\ref{cor:main2}.
Thus, $\mT$ 
admits a minimum genus Bennequin surface with exactly $\delta(\mT)$ negative bands, even though the Bennequin surface $F_{w}$ may not have the minimum genus $g(\mT)$.

For suitable choices of $x$ and $y$, we can easily make $\mT$ non-fibered:

Let $x= (\sigma_{1,3}\sigma_{2,4}\sigma_{1,3}\sigma_{2,4})^{N+1}\sigma_{1,3} \in B_4$ for $N \geq \frac{3\delta+9}{4}$ and $y \in B_4$ be a braid word in $\{\sigma_{1,3}^{\pm 1}, \sigma_{2,4}^{\pm 1}\}$ containing $\delta$ negative band generators.  The closure $K$ of the 4-braid $w=xy$ realizes the braid index $b(\mT)$ of $\mT$. Since the Bennequin surface $F_{w}$ is not connected, the Alexander polynomial of $\mT$ is zero (see \cite[Proposition 6.14]{L}).
In particular, $\mT$ is not fibered.
Using  \cite[Lemma 4.13]{ik2} we obtain $c(x)\geq N$. 
The above argument shows that $K$ satisfies the assumptions of Theorem~\ref{theorem:main2} and Corollary~\ref{cor:main2}.

\end{example}

\section*{Acknowledgements}
The authors would like to thank Inanc Baykur, John Etnyre, Matthew Hedden, Jeremy Van Horn-Morris, and the referee.  
TI was partially supported by JSPS Grant-in-Aid for Young Scientists (B) 15K17540.
KK was partially supported by NSF grant DMS-1206770 and Simons Foundation Collaboration Grants for Mathematicians.


\begin{thebibliography}{1}

\bibitem{B}
I. Baykur, J. Etnyre, M. Hedden, K. Kawamuro, and J. Van Horn-Morris, 
{\em Contact and symplectic geometry and the mapping class groups}. 
Official report of the 2nd SQuaRE meeting, July 2015, American Institute of Mathematics. 



\bibitem{be} D. Bennequin, 
{\em Entrelacements et \'equations de Pfaff,} Ast{\'e}risque, 107-108, (1983) 87-161.


\bibitem{bf} J. Birman and E. Finkelstein, 
{\em Studying surfaces via closed braids,}
J. Knot Theory Ramifications, \textbf{7} (1998), 267-334. 


\bibitem{bgg} J. Birman, V. Gebhardt, and J. Gonz\'alez-Meneses,
{\em Conjugacy in Garside groups. I. Cyclings, powers and rigidity.}
Groups Geom. Dyn. 1 (2007), no. 3, 221-279. 


 
\bibitem{bkl} J. Birman, K.H. Ko and S.J. Lee,
{\em A new approach to the word and conjugacy problems in the braid groups,}
Adv. Math. \textbf{139} (1998), no. 2, 322-353. 


\bibitem{bm1} J. Birman and W. Menasco,   
{\em Studying links via closed braids. I. A finiteness theorem} Pacific J. Math. \textbf{154} (1992), no. 1, 17-36.

\bibitem{bm2} J. Birman and W. Menasco,   
{\em Studying links via closed braids. II. On a theorem of Bennequin. }Topology Appl. \textbf{40} (1991), no. 1, 71-82.

\bibitem{BM} J. Birman and W. Menasco,  
{\em  Stabilization in the braid groups I: MTWS,} 
Geom. Topol. \textbf{10} (2006) 413-540. 

\bibitem{ddgkm}
P. Dehornoy, 
{\em Foundations of Garside theory}. With Fran\c{c}ois Digne, Eddy Godelle, Daan Krammer and Jean Michel. Contributor name on title page: Daan Kramer. EMS Tracts in Mathematics, 22. European Mathematical Society (EMS), Z\"urich, 2015. xviii+691 pp.



\bibitem{E}Y. Eliashberg, 
{\em Contact 3-manifolds twenty years since J Martinet's work, }Ann. Inst. Fourier Grenoble \textbf{42} (1992) 165-192. 


\bibitem{Et}
J. Etnyre 
{\em Lectures on open book decompositions and contact structures.} Floer homology, gauge theory, and low-dimensional topology, 103--141, Clay Math. Proc., 5, Amer. Math. Soc., Providence, RI, 2006.

\bibitem{EO}
J.Etnyre and B. Ozbagci, 
{\em Invariants of contact structures from open books}, Trans. Amer. Math. Soc. \textbf{360}(6) (2008), 3133-3151.

\bibitem{ev} J. Etnyre and J. Van Horn-Morris,
{\em Fibered transverse knots and the Bennequin bound,}
Int. Math. Res. Not. IMRN 2011, 1483-1509. 

\bibitem{E2}J. Etnyre, 
{\em On knots in overtwisted contact structures.} 
Quantum Topol. 4 (2013), no. 3, 229-264.
 
 
\bibitem{FM}B. Farb and D. Margalit, 
{\em A primer on mapping class groups}, 
Princeton Mathematical Series, 49. Princeton University Press, Princeton, NJ, 2012. xiv+472 pp. 


\bibitem{Hayden}
K. Hayden,
{\em Quasipositive links and Stein surfaces},
arXiv:1703.10150v1. 

\bibitem{he} M. Hedden, 
{\em Notions of positivity and the Ozsv\'ath-Szab\'o concordance invariant,}
J. Knot Theory Ramifications \textbf{19} (2010), 617-629. 

\bibitem{H2}M. Hedden, 
http://users.math.msu.edu/users/mhedden/CV\_files/research.pdf

\bibitem{HS}
M. Hirasawa and A. Stoimenow, 
{\em Examples of knots without minimal string Bennequin surfaces.} 
Asian J. Math. 7 (2003), no. 3, 435-445. 

\bibitem{hkm} K. Honda, W. Kazez and G. Mati\'c,
{\em{Right-veering diffeomorphisms of compact surfaces with boundary,}}
Invent. Math. \textbf{169}, (2007), 427-449. 

\bibitem{it0} T. Ito, 
{\em{Braid ordering and the geometry of closed braid,}}
Geom. Topol. \textbf{15} (2011), 473-498.

\bibitem{it} T. Ito, 
{\em On a structure of random open books and closed braids}, 
Proc. Japan Acad. Ser. A Math. Sci. \textbf{91} (2015), 160-162.

\bibitem{ik1-1}T. Ito and K. Kawamuro, 
{\em Open book foliations,}
Geom. Topol. \textbf{18}, (2014) 1581-1634. 

\bibitem{ik2}T. Ito and K. Kawamuro, 
{\em Essential open book foliation and fractional Dehn twist coefficient}, 
Geom. Dedicata 187 (2017), 17-67. 

\bibitem{ik3} T. Ito and K. Kawamuro, 
{\em Operations on open book foliations}, 
Algebr. Geom. Topol. \textbf{14} (2014), 2983-3020.

\bibitem{ik4} T. Ito and K. Kawamuro, 
{\em On a question of Etnyre and Van Horn-Morris}, 
Algebr. Geom. Topol. \textbf{17} (2017), 561-566.


\bibitem{ik-QRV}
T. Ito and K. Kawamuro, 
{\em Quasi right-veering braids and non-loose links}, 
(2017) preprint. 


\bibitem{L}
R. Lickorish, 
{\em An introduction to knot theory. }
Graduate Texts in Mathematics, 175. Springer-Verlag, New York, 1997.

\bibitem{M}
A. Malyutin,
{\em Twist number of (closed) braids},
St. Petersburg Math. J. Vol. 16 (2005), No. 5, Pages 791-813.



\bibitem{MM} Y. Mitsumatsu, A. Mori,  
{\em On Bennequin's Isotopy Lemma,} 
an appendix to {\em Convergence of contact structures to foliations.}
Foliations 2005, 365-371, World Sci. Publ., Hackensack, NJ, 2006.



\bibitem{OS} S. Y. Orevkov and V. V. Shevchishin, {\em Markov theorem for transversal links}, J. Knot Theory Ramifications \textbf{12} (2003), 905-913. 

\bibitem{OzSt}
B. Ozbagci and A. Stipsicz, 
{\em Surgery on contact 3-manifolds and Stein surfaces. }Bolyai Society Mathematical Studies, 13. Springer-Verlag, Berlin; J\'anos Bolyai Mathematical Society, Budapest, 2004. 281 pp.

\bibitem{Pav} E. Pavelescu, 
{\em Braids and Open Book Decompositions}, 
Ph.D. thesis, University of Pennsylvania (2008), 
{\tt http://www.math.upenn.edu/grad/dissertations/ElenaPavelescuThesis.pdf}

\bibitem{P2}E. Pavelescu, 
{\em Braiding knots in contact 3-manifolds},  
Pacific J. Math. \textbf{253} (2011), no. 2, 475-487. 

\bibitem{Ru83}
L. Rudolph, 
{\em Braided surfaces and Seifert ribbons for closed braids. }Comment. Math. Helv. 58  (1983), no. 1, 1-37. 

\bibitem{R}L. Rudolph, 
{\em Knot theory of complex plane curves}, Handbook of knot theory, 349-427, Elsevier B. V., Amsterdam, 2005. 


\bibitem{X}
Xu, Peijun. 
{\em The genus of closed 3-braids.} 
J. Knot Theory Ramifications 1 (1992), no. 3, 303-326. 

\end{thebibliography}
\end{document}